\newtheorem{theorem}{Theorem}
\newtheorem{lemma}[theorem]{Lemma}
\newtheorem{observation}[theorem]{Observation}
\newtheorem{corollary}[theorem]{Corollary}
\newtheorem{conjecture}[theorem]{Conjecture}
\newtheorem{problem}[theorem]{Problem}
\newtheorem{remark}[theorem]{Remark}
\theoremstyle{definition}
\newcommand{\rby}[2][]{\xrightarrow{\text{L\ref{lem:#2}}\ifthenelse{\isempty{#1}}{}{,#1}}}
\title{An update on reconfiguring $10$-colorings of planar graphs}
\author{Zden\v{e}k Dvo\v{r}\'ak\thanks{Computer Science Institute of Charles University, Prague, Czech Republic, email: \texttt{rakdver@iuuk.mff.cuni.cz} }\and
Carl Feghali\thanks{Computer Science Institute of Charles University, Prague, Czech Republic, email: \texttt{feghali.carl@gmail.com} }}
\begin{document}   

\maketitle

\begin{abstract}
The reconfiguration graph $R_k(G)$ for the $k$-colorings
of a graph~$G$ has as vertex set the set of all possible proper $k$-colorings
of $G$ and two colorings are adjacent if they differ in the color of exactly one vertex.
A result of Bousquet and Perarnau (2016) regarding graphs of bounded degeneracy implies
that if $G$ is a planar graph with $n$ vertices, then $R_{12}(G)$ has diameter at most $6n$.
We improve on the number of colors, showing that $R_{10}(G)$ has diameter at most $8n$ for every planar graph $G$ with $n$ vertices.
\end{abstract}

\section{Introduction and result} 

Let $G$ be a graph, and let $k$ be a non-negative integer. 
A (proper) \emph{$k$-coloring} of $G$ is a function $\varphi: V(G) \rightarrow \{1, \dots, k\}$ such that $\varphi(u) \neq \varphi(v)$ whenever $uv \in E(G)$. 
The reconfiguration graph $R_k(G)$ of the $k$-colorings of $G$ has as vertex set the set of all $k$-colorings of $G$,
with two colorings adjacent if they differ in the color of exactly one vertex.  That is, two $k$-colorings $\varphi_1$ and $\varphi_2$ are joined by a path
in $R_k(G)$ if and only if we can transform $\varphi_1$ into $\varphi_2$ by recoloring vertices one by one, always keeping the coloring proper,
and the number of recolorings needed is equal to the distance between $\varphi_1$ and $\varphi_2$ in $R_k(G)$.  Hence, it is natural to ask how
the diameter of $R_k(G)$ depends on $k$ and the number of vertices of $G$, subject to various conditions ensuring the $k$-colorability of $G$.

The study of the reconfiguration graph for colorings was begun by the
statistical physics community  in the context of Glauber dynamics for random
colorings; see for example~\cite{jerrum, vigoda}.  It has also recently
attracted the attention because of its connections to the
existence of FPTAS for the number of colorings, but also for its own
structural and computational merit.  For example, typical questions include
deciding whether two colorings belong to the same component of the
reconfiguration graph, or that of determining the diameter of its components.
For more details, we refer the reader to the surveys by van den Heuvel~\cite{He13} and by Nishimura~\cite{nishimura}.

A graph is \emph{$k$-degenerate} if every subgraph of the graph contains a vertex of degree at most $k$. 
Clearly, every $k$-degenerate graph $G$ is $(k+1)$-colorable, but $R_{k + 1}(G)$ may be disconnected
(e.g. in the case $G = K_{k + 1}$, but there are many more instances~\cite{bonamy12}).
On the other hand, $R_{k + 2}(G)$ is always connected~\cite{dyer}.  Cereceda~\cite{luisthesis} conjectured the following. 

\begin{conjecture}\label{conj}
If $G$ is a $k$-degenerate graph on $n$ vertices, then $R_{k + 2}(G)$ has diameter $O(n^2)$. 
\end{conjecture}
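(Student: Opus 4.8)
Since this is Cereceda's conjecture, which is open in general, what follows is the line of attack I would pursue together with the point at which I expect it to stall. The plan is to fix a degeneracy ordering $v_1,\dots,v_n$ of $G$ -- each $v_i$ having at most $k$ neighbors among $v_1,\dots,v_{i-1}$, its \emph{back-neighbors} -- and, given two $(k+2)$-colorings $\varphi$ and $\psi$, to transform $\varphi$ into $\psi$ by sweeping $i=1,\dots,n$ and permanently fixing $v_i$ to color $\psi(v_i)$ at step $i$, so that after step $i$ the coloring agrees with $\psi$ on $\{v_1,\dots,v_i\}$. The immediate difficulty, and the reason the conjecture is hard, is that at step $i$ the color $\psi(v_i)$ may already appear on a back-neighbor of $v_i$, and with only $k+2$ colors there is essentially no room to evacuate it without disturbing an already-fixed vertex; with $k+1$ colors this sweep works verbatim precisely when $G$ has a perfect elimination ordering, which is why the chordal case is comparatively tractable.

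To manufacture the missing slack I would attach to each coloring an integer-valued invariant $\mu(\varphi)$ that is either preserved by single-vertex recolorings or changes by a bounded amount -- the prototype being the height function used for $R_3$ of bipartite graphs, and in general something like the vector of winding numbers of $\varphi$ along a cycle basis of $G$. The argument would then split into three pieces: first, show that each recoloring move changes $\mu$ by $O(1)$ and that $|\mu(\varphi)-\mu(\psi)|=O(n)$ for an $n$-vertex graph; second, prove a normal-form lemma asserting that in $O(n)$ moves any coloring can be driven to a canonical representative depending only on its value of $\mu$; third, show that two canonical representatives with the same invariant are within $O(n)$ of each other, while incrementing $\mu$ by one unit costs $O(n)$ moves. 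Composing these yields a walk of length $O(n)\cdot O(n)=O(n^2)$ in $R_{k+2}(G)$ between $\varphi$ and $\psi$.

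The hard part will be the normal-form lemma for a \emph{general} $k$-degenerate graph. A usable invariant is known only when $G$ has a rigid cyclic skeleton -- subgraphs of the triangular lattice when $k=2$, chordal graphs, or graphs of small maximum average degree -- and for an arbitrary $k$-degenerate graph it is not even clear what $\mu$ should measure; this is precisely why the best unconditional results are a polynomial in $n$ of degree about $k+1$, or a linear bound but at the price of roughly $2k+2$ colors, such as the bound of Bousquet and Perarnau quoted above. My realistic expectation is therefore that the conjecture will not fall to a single argument: the productive steps are to push the normal-form method through further structured subclasses (planar graphs with as few colors as possible, graphs of bounded treewidth, graphs of small maximum average degree) and to try to lower the exponent $k+1$ toward something sublinear in $k$ -- which is also the spirit of the improvement from twelve to ten colors for planar graphs obtained in this paper.
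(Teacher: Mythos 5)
You have correctly identified that Conjecture~\ref{conj} is Cereceda's conjecture, which is open, and the paper does not prove it --- it is stated only as context and motivation for the weaker linear-diameter results (with more colors) that the paper actually establishes. There is therefore no proof in the paper to compare against, and your response is appropriately an honest sketch of a line of attack rather than a purported proof. Your diagnosis of the obstruction is sound: the naive sweep along a degeneracy ordering stalls exactly because $\psi(v_i)$ can already sit on a back-neighbor with no room to shuffle it away in $k+2$ colors, and no invariant analogous to the height function for $R_3$ of bipartite graphs is known for general $k$-degenerate graphs. One small calibration worth making: the ``normal-form plus invariant'' scheme you outline would, if it worked, give $O(n^2)$, but the best known unconditional bound of Bousquet and Heinrich is $O(n^{k+1})$, and their argument is not of this invariant type --- it is a recursive recoloring argument; so even the structured cases where your step two is known do not all fit the template you describe. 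Also note that the paper's own contribution sidesteps the conjecture entirely by trading colors for diameter: it shows $R_{10}(G)$ has diameter $O(n)$ for planar $G$ via a once-only-recoloring reduction to the $9$-colorable case and a discharging argument, which is orthogonal to the invariant-based program you sketch. Your proposal is a reasonable research note but, as you yourself say, it is not a proof of the statement.
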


This bound would be best possible~\cite{BJLPP14}.  Although the conjecture has resisted several efforts, there have been some partial results
surrounding it~\cite{bonamy13, bousquet11, heinrich, eiben, feghalipaths, feghali, brooks}.
The most important breakthrough is a theorem of Bousquet and Heinrich~\cite{heinrich} where it was shown,
amongst other results, that $R_{k + 2}(G)$ has diameter $O(n^{k + 1})$. In particular, the conjecture is still open even for $k = 2$.  

Bousquet and Perarnau~\cite{bousquet11} gave the following bound in the situation when the number of colors is substantially larger than $k + 2$.
\begin{theorem}[{Bousquet and Perarnau~\cite[Theorem~1]{bousquet11}}]\label{thm:bousquet}
If $G$ is a $k$-degenerate graph on $n$ vertices and $c\ge 2k+2$, then $R_c(G)$ has diameter at most $(k+1)n$. 
\end{theorem}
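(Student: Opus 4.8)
My plan is to prove a quantitative strengthening by induction on $n$. Fix a degeneracy ordering $v_1,\dots,v_n$ of $V(G)$, that is, one in which each $v_i$ has at most $k$ neighbours among $v_{i+1},\dots,v_n$; such an ordering exists precisely because $G$ is $k$-degenerate. I will show that any $c$-colouring $\alpha$ can be transformed into any $c$-colouring $\beta$ by a sequence of recolourings in which every vertex is recoloured at most $k+1$ times. Summing over the $n$ vertices, such a sequence has at most $(k+1)n$ steps, which bounds the distance between $\alpha$ and $\beta$, and hence the diameter of $R_c(G)$. The base case $n\le 1$ is immediate.

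For the inductive step I would delete $v:=v_1$. By the choice of ordering $\deg_G(v)\le k$, and $H:=G-v$ is $k$-degenerate on $n-1$ vertices with the induced degeneracy ordering $v_2,\dots,v_n$. By the inductive hypothesis there is a recolouring sequence $\sigma$ transforming $\alpha|_H$ into $\beta|_H$ inside $H$ in which every vertex is recoloured at most $k+1$ times. The idea is to lift $\sigma$ to $G$: keep $v$ on its colour $\alpha(v)$ and replay the moves of $\sigma$ one at a time, but each time the next move of $\sigma$ would recolour a neighbour $u\in N(v)$ to the colour currently worn by $v$, first recolour $v$ to a colour not currently used on $N(v)$ and distinct from the colour $u$ is about to receive. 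Since $|N(v)|\le k$, at such a moment at most $k+1$ colours are forbidden for $v$, and as $c\ge 2k+2$ there remain at least $k+1$ admissible colours, so this is always possible and keeps the colouring of $G$ proper. When $\sigma$ terminates, $N(v)$ is coloured as in $\beta$, and one final recolouring puts $v$ on $\beta(v)$; all moves of $\sigma$, now legal in $G$, recolour only vertices of $H$.

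The step I expect to be the main obstacle is controlling how often $v$ is recoloured: the dodging step could a priori be triggered once for each of the up to $k(k+1)$ recolourings of vertices of $N(v)$ along $\sigma$, far more than the $k+1$ moves of $v$ we are allowed, and this is precisely where the hypothesis $c\ge 2k+2$ must be exploited. My plan is to choose the sequence $\sigma$ supplied by induction together with the parking colours of $v$: at every moment $N(v)$ occupies at most $k$ colours while at least $k+1$ colours remain free for $v$, so by looking ahead in $\sigma$ one can always park $v$ on a colour that stays valid until the next genuinely unavoidable recolouring of $N(v)$, and a charging argument should show that at most $k$ such critical moments arise before the terminal move onto $\beta(v)$. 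Granting this, $v$ is recoloured at most $k+1$ times, every other vertex keeps its bound of $k+1$ inherited from $\sigma$, the induction closes, and the diameter of $R_c(G)$ is at most $(k+1)n$.
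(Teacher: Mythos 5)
This theorem is not proved in the paper --- it is quoted from Bousquet and Perarnau~\cite[Theorem~1]{bousquet11} --- so there is no internal proof to compare against. Judged on its own terms, your proposal has a genuine gap, and you essentially say so yourself: the assertion that ``a charging argument should show that at most $k$ such critical moments arise'' is precisely the content of the theorem, and ``Granting this'' marks the place where the argument stops being a proof. Everything before that point (the degeneracy ordering, deleting $v_1$, replaying $\sigma$ on $G-v_1$, and dodging $v$ when a neighbour wants its colour) is straightforward bookkeeping; the theorem lives entirely in the bound on the number of dodges, and that is exactly the step you leave unproved. In particular ``park $v$ on a colour that stays valid until the next genuinely unavoidable recolouring of $N(v)$'' is not a well-defined rule, because which recolourings are unavoidable depends on the very parking colours you are trying to choose, and a naive greedy ``longest safe parking colour'' choice does not obviously yield the bound $k+1$.

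The missing argument can in fact be carried out, and it is worth spelling out why the hypothesis $c\ge 2k+2$ is exactly what makes it work, since that is the quantitative heart of the theorem. The neighbours $u_1,\dots,u_d$ of $v$ (with $d=\deg_G(v)\le k$) are recoloured at most $(k+1)$ times each along $\sigma$, so $\sigma$ contains at most $d(k+1)$ recolourings of $N(v)$. Split these into $d$ consecutive blocks of at most $k+1$ recolourings each. Within a single block, at most $k$ colours are forbidden for $v$ at the start of the block (the current colours of $N(v)$) and each of the $\le k+1$ recolourings in the block can forbid at most one new colour, so at most $2k+1<c$ colours are ever forbidden during the block; hence some colour is safe for $v$ throughout the block. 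Parking $v$ on such a colour per block, and adding one move to reach the first parking colour and one move to reach $\beta(v)$ at the end, recolours $v$ at most $(d-1)+2=d+1\le k+1$ times, and the induction closes. Without some explicit count of this kind --- identifying the right unit of ``event'' to charge against and showing that at most $k+1$ forbidden colours can accumulate before $v$ can re-park --- the proposal does not establish the stated bound.
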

It was also shown by Bartier and Bousquet~\cite{bartier} that $R_{k + 4}(G)$ has diameter $O(n)$
for every $k$-degenerate chordal graph $G$ of bounded maximum degree. Another result in this direction was
obtained by the second author~\cite{fsparse} by showing that $R_{k + 2}(G)$ has diameter $O(n (\log n)^{k + 1})$ for every graph $G$
of maximum average degree at most $k + \epsilon$ ($0 \leq \epsilon < 1$).

Planar graphs are $5$-degenerate and have maximum average degree less than $6$, and thus the aforementioned
results imply that if $G$ is a planar graph with $n$ vertices, then $R_8(G)$ has diameter $O(n (\log n)^{7})$
and $R_{12}(G)$ has diameter at most $6n$.
This motivates the following question.
\begin{problem}\label{pbm:main}
What is the minimum integer $\kappa$ such that for every planar graph $G$ with $n$ vertices,
$R_\kappa(G)$ has diameter $O(n)$?
\end{problem}
The object of this paper is to show $\kappa\le 10$, improving on the bound $12$ following from Theorem~\ref{thm:bousquet}.

\begin{theorem}\label{thm:main}
Let $G$ be a planar graph on $n$ vertices. Then $R_{10}(G)$ has diameter at most $8n$. 
\end{theorem}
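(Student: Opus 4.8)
The plan is to prove Theorem~\ref{thm:main} by induction on $n$. Since planar graphs are $5$-degenerate, $G$ has a vertex $v$ of degree at most $5$; set $G' = G - v$, which is again planar on $n-1$ vertices. Given two $10$-colorings $\varphi,\psi$ of $G$, I would apply the inductive hypothesis to the restrictions of $\varphi$ and $\psi$ to $G'$ to obtain a short recoloring sequence for $G'$, and then \emph{lift} it to $G$: each step of the $G'$-sequence recolors a single vertex $u$, and I mirror that step in $G$ unless $u$ is a neighbor of $v$ and $v$ currently carries the color that $u$ is to receive, in which case I first recolor $v$ to some other color and then perform the step. Because $v$ has at most $5$ neighbors and there are $10$ colors, $v$ always has at least $5$ available colors, so such a recoloring is always possible; and since the entire $G'$-sequence is known in advance, the colors assigned to $v$ along the way may be chosen offline.

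The length of the lifted sequence is the length of the $G'$-sequence plus the number of extra recolorings of $v$, so the crux is to bound the latter by a constant; a bare bound on the diameter of $R_{10}(G')$ is not enough, and the inductive statement must be strengthened to also control how often each vertex is recolored. Concretely, I would aim to prove a statement of the form ``every two $10$-colorings of a planar graph on $n$ vertices are joined by a sequence of at most $8n$ recolorings in which each vertex is recolored at most eight times''. Under such a hypothesis the at most $5$ neighbors of $v$ are recolored at most $40$ times in total along the $G'$-sequence, so choosing the colors of $v$ becomes an offline ``caching'' problem: $v$ holds one color out of a pool in which at least $5$ colors are free at every moment, and it must change color only when its current color is ``requested'' (assigned to a neighbor). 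A furthest-in-future (B\'el\'ady-type) choice then bounds the number of forced recolorings of $v$ by a small constant, and hence---after possibly one recoloring of $v$ at the start and one at the end to reach $\psi(v)$---$v$ is recolored at most eight times in the lifted sequence, closing the induction. This is the point at which the numbers $10$ (colors), $5$ ($=10-5$ free colors) and $8n$ (diameter) get pinned to one another.

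The delicate case is when $G$ has minimum degree $5$, for then every deletable vertex has exactly $5$ free colors and the caching estimate sits on the boundary of what is needed. Here I would appeal to structural information about minimum-degree-$5$ planar graphs, of the kind obtained by discharging: such a graph contains one of a short list of reducible configurations---for instance two adjacent vertices of degree $5$, or a degree-$5$ vertex with an edge or a low-degree vertex in its neighborhood, or some slightly larger local pattern---and each configuration can be reduced, by deleting one or two vertices at once, in a way that makes the lifting succeed with room to spare, because shared or low-degree neighbors either lower the number of colors the deleted vertex must avoid or limit how often those neighbors are recolored. The induction then removes such a configuration at each step, charging at most $8$ recolorings to each deleted vertex.

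The main obstacle I anticipate is the interaction between these two ingredients: the structural analysis must output configurations that are not only reducible but also ``reconfiguration-friendly'', so that deletion permits lifting with only $O(1)$ extra recolorings while preserving the strengthened per-vertex invariant, and the constants in the caching estimate, in the per-vertex bound, and in the structural rules must all be made to fit under the bound $8n$. Simply deleting an arbitrary degree-$5$ vertex is not enough---a neighbor could be recolored to $v$'s color linearly often---so the heart of the proof is to design the reducible configurations and their recoloring schemes together.
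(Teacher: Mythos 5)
Your plan is a genuinely different route from the paper's, and as sketched it does not close. The paper does not run an induction deleting low-degree vertices from $G$ and lifting recoloring sequences. Instead it proves a one-sided ``normalization'' lemma (Theorem~\ref{thm:9colors}): starting from any $10$-coloring $\alpha$, there is a sequence to \emph{some} $9$-coloring that recolors each vertex at most once, or exactly twice via color $10$. That costs $\le 2n$. Applying it to both endpoints and then routing through a fixed ``canonical'' color class --- Thomassen's partition of $V(G)$ into an independent set $I$ (pushed to color $10$) and a $3$-degenerate graph $D$ (on which Bousquet--Perarnau with $9\ge 2\cdot 3+2$ colors gives a $4|V(D)|$ bound) --- yields $8n$. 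Discharging and reducible configurations are used in the paper, but only to establish Theorem~\ref{thm:9colors}, where the problem is asymmetric (go from one coloring to \emph{some} $9$-coloring), and where the key trick --- temporarily recoloring vertices \emph{to} color $10$ so that color $10$ appears in every closed neighborhood, which makes the configurations reducible --- has no counterpart in your sketch.

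On your own terms there is a concrete numerical gap in the caching step, exactly where you flag the argument as delicate. With $\deg v\le 5$ and $10$ colors, when a neighbor $u$ of $v$ is about to be recolored to $v$'s current color, the set of safe destinations for $v$ has size at least $10-5=5$. Under the per-vertex invariant ``each vertex recolored at most $8$ times'', the neighbors of $v$ generate at most $5\cdot 8=40$ requests. The furthest-in-future argument gives that between two consecutive forced moves of $v$ (after the first), at least $5$ requests occur, so the number of forced moves is at most $1+40/5=9$; adding the final move to $\psi(v)$ gives up to $10$ recolorings of $v$, not $8$. So the invariant ``$\le 8$ per vertex'' does not propagate, and consequently neither does the $8n$ total. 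You defer the fix to discharging-produced configurations that are simultaneously reducible and ``reconfiguration-friendly'', but that is precisely the hard content and it is not supplied; it is not at all clear that a short list of configurations lets you delete one or two vertices while keeping both the per-vertex cap and the total under $8n$, and the difficulty compounds because the two endpoints $\varphi,\psi$ must be handled symmetrically in your induction (unlike the paper's one-sided Theorem~\ref{thm:9colors}). In short: the approach is plausible as a research direction, but as written it has a quantitative hole in the Bélády estimate and a large unproven structural step, and it does not resemble the paper's actual proof.
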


Consider the coloring of the icosahedron graph $D$ where the opposite vertices get the same color.
This gives a 6-coloring of $D$ where the closed neighborhood of each vertex contains all 6 colors,
and hence this 6-coloring forms an isolated vertex in $R_6(D)$.  Consequently, $R_6(G)$ does not
even need to be connected for planar graphs, implying $\kappa\ge 7$.  However, not much is known
about $R_7(G)$ for planar graphs $G$. The 5-degenerate graphs for which $R_7(G)$ has quadradic diameter
constructed in~\cite{BJLPP14} (paths with four apex vertices) are non-planar.  A natural candidate
for a planar graph $G$ with $R_7(G)$ of quadratic diameter is as follows: Consider the drawing of $K_7$ on
the torus.  Cut this drawing along a non-contractible triangle and glue together many copies of the resulting
cylinder.  We obtain a planar graph with a $7$-coloring such that the closed neighborhood of all but six vertices
contains all $7$ colors, so to recolor this graph, one has to ``propagate'' from the ends of the cylinder.
However, this graph $G$ is $3$-degenerate and chordal, and thus $R_7(G)$ in fact has linear diameter by the aforementioned
result of Bartier and Bousquet~\cite{bartier}.  Hence, we cannot exclude the possibility that the answer to Problem~\ref{pbm:main}
is $\kappa=7$.

\section{Outline of the proof}

In this section, we lay out our strategy for proving Theorem \ref{thm:main}.  Let us start off by noting that Theorem \ref{thm:main} will follow as an immediate consequence to the following theorem. 

\begin{theorem}\label{thm:9colors}
Let $G$ be a planar graph. Let $\alpha$ be a $10$-coloring of $G$. Then there exists a sequence of recolorings from $\alpha$ to some $9$-coloring of $G$ that
recolors every vertex either at most once, to a color distinct from $10$, or exactly twice, first to the color $10$ and then to a color distinct from $10$.
\end{theorem}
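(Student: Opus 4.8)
The plan is to fix in advance a target proper colouring with a small palette, transform $\alpha$ into it, and use colour $10$ solely as a temporary ``parking'' slot; the prescribed recolouring pattern then falls out of the mechanics.

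Concretely, I would fix a $5$-degeneracy ordering $v_1,\dots,v_n$ of $G$ (each $v_i$ has at most $5$ neighbours among $v_1,\dots,v_{i-1}$; such an ordering exists because planar graphs are $5$-degenerate), and let $\gamma$ be the proper $6$-colouring obtained by colouring the $v_i$ greedily in this order with colours in $\{1,\dots,6\}$. The aim is to transform $\alpha$ into $\gamma$, which is in particular a $9$-colouring, in two phases. In the \emph{main phase}, process $v_1,\dots,v_n$ in order and ``settle'' each $v_i$ to the colour $\gamma(v_i)$, maintaining the invariant that $v_1,\dots,v_{i-1}$ are already settled; the point of targeting $\gamma$ is that then no \emph{settled} neighbour of $v_i$ carries $\gamma(v_i)$, so the only obstructions to recolouring $v_i$ directly to $\gamma(v_i)$ are unsettled neighbours $w$ of $v_i$ that currently carry $\gamma(v_i)$. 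Such a $w$ I would recolour to colour $10$ (``park'' it) — which is legitimate exactly when $w$ has no neighbour currently coloured $10$ — and leave for later; moreover the vertices of $\alpha^{-1}(10)$ are never settled in this phase. In the \emph{clean-up phase}, settle every parked vertex and every vertex of $\alpha^{-1}(10)$: since the set of vertices currently coloured $10$ (the parked vertices together with the still-unsettled vertices of $\alpha^{-1}(10)$) remains independent throughout, at the start of clean-up each of these vertices has all of its neighbours already settled, so it can be recoloured in a single step to a colour distinct from $10$. With this scheme every vertex of $\alpha^{-1}(10)$ is recoloured exactly once, to a colour $\ne 10$; every parked vertex is recoloured exactly twice, first to $10$ and then to a colour $\ne 10$; every other vertex is recoloured at most once, to $\gamma(\cdot)\ne 10$; and the whole sequence has length at most $2n$.

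The substance of the proof lies in showing that the main phase never gets stuck, and this is where planarity, rather than mere $5$-degeneracy, is needed. The awkward case is that an obstructing neighbour $w$ cannot be parked because some neighbour of $w$ — an unsettled vertex of $\alpha^{-1}(10)$, or a vertex parked earlier — already sits at colour $10$. One then has to recolour $w$ directly to $\gamma(w)$ instead (after recursively clearing $\gamma(w)$ off the offending neighbours of $w$), and for this to succeed without re-recolouring any vertex, and without looping, one needs $\gamma(w)$ to be free on the neighbourhood of $w$ at that moment. For vertices of degree at most $8$ this is automatic — there is always a free colour — so the entire difficulty is concentrated on high-degree vertices, and above all on high-degree vertices of $\alpha^{-1}(10)$, which cannot create room for themselves by parking. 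I expect the heart of the argument, and the main obstacle, to be a discharging argument establishing the structural statement about planar graphs that makes it possible to choose the degeneracy ordering, and the order in which parked vertices and vertices of $\alpha^{-1}(10)$ are settled, so that all these conflicts are resolvable; this is also precisely the ingredient that improves the bound from the $12$ colours provided by Theorem~\ref{thm:bousquet} to $10$, the two saved colours reflecting the slack gained by aiming only at \emph{some} $9$-colouring and by targeting a $6$-colouring (which keeps every neighbourhood's palette small). Checking the recolouring pattern and the bound $2n$ is then routine.
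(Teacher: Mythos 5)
Your architecture---fix a $5$-degeneracy ordering and a target greedy $6$-colouring $\gamma$, settle vertices to $\gamma$ in a main phase while parking obstructions at colour $10$, then settle parked and originally-$10$-coloured vertices in a clean-up phase---is genuinely different from the paper's. The paper proves Theorem~\ref{thm:9colors} by assuming a minimal counterexample scene $(G,\alpha)$, showing it must be a triangulation of minimum degree five in which colour $10$ appears in every closed neighbourhood (Lemma~\ref{prop:1}, Corollary~\ref{cor-deg}), developing a small calculus of ``motifs'' to prove that sixteen specific configurations are reducible (Lemmas~\ref{prop:2} through~\ref{lem:reducible}), and discharging over the triangulation (Lemma~\ref{lemma-appear}) to show one of these configurations must appear. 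No target colouring or vertex ordering occurs anywhere. Crucially, the minimal-counterexample set-up is what supplies the strong structural hypotheses (triangulation, minimum degree $5$, colour $10$ in every closed neighbourhood, plus the freedom to remove vertices and appeal to minimality when defining lists); your constructive framework has no analogue of these, so it is far from clear that a discharging argument even has a sensible structure to act on.

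The proposal also has a genuine gap precisely at the point you flag. When an obstructing vertex $w$ cannot be parked because a neighbour already carries colour $10$, you propose to settle $w$ directly to $\gamma(w)$ ``after recursively clearing $\gamma(w)$ off the offending neighbours of $w$,'' but this recursion is given no termination argument and can cascade or cycle. Your claim that the case $\deg w\le 8$ is automatic because ``there is always a free colour'' conflates two different things: some colour being free on $w$'s neighbourhood is guaranteed by degree counting, but you need the \emph{specific} colour $\gamma(w)$ to be free---once $w$ is recoloured to any colour other than $10$, the validity condition forbids ever touching $w$ again, so any choice $c\neq\gamma(w)$ will collide when a later-settled neighbour $v_j$ of $w$ with $\gamma(v_j)=c$ is processed. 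Finally, the discharging argument you invoke to dispose of the high-degree cases is not described at all: you do not say what planar structural statement it would establish, why that statement would be true, nor how it would feed back into the algorithm to resolve the conflicts without violating the once-or-twice recolouring pattern. The bookkeeping parts of the proposal (the $2n$ bound, the independence of the set of vertices carrying colour $10$, the correctness of the clean-up phase given a successful main phase) are fine, but the substance of the proof is absent.
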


Theorem \ref{thm:main} follows by a standard argument.

\begin{proof}[Proof of Theorem \ref{thm:main}]
Let $\alpha$ and $\beta$ be $10$-colorings of $G$. To prove the theorem, it suffices to show that we can recolor $\alpha$ to $\beta$ by at most $8n$ recolorings. 

By Theorem~\ref{thm:9colors}, we can recolor $\alpha$ to some $9$-coloring $\alpha_1$ of $G$ by at most $2n$ recolorings and $\beta$ to some $9$-coloring $\beta_1$ by at most $2n$ recolorings.
By~\cite{thomassen}, there exists a partition of $V(G)$ into an independent set $I$ and a $3$-degenerate graph $D$.
From $\alpha_1$ and $\beta_1$ recolor the vertices in $I$ to color $10$ (the color that is not used in $\alpha_1$ and $\beta_1$).
Let $\alpha_2$ and $\beta_2$ denote the restrictions of $\alpha_1$ and $\beta_1$ to $D$.
Applying Theorem~\ref{thm:bousquet}, the distance between $\alpha_2$ and $\beta_2$ in $R_9(D)$ is at most $4|V(D)|$,
and thus we can recolor $\alpha_2$ to $\beta_2$ by at most $4|V(D)|$ recolorings without using the color $10$. This completes the proof. 
\end{proof} 

The rest of this paper will be devoted to the proof of Theorem \ref{thm:9colors}. In order to prove the theorem, we must first make a few definitions.
A \emph{scene} is a pair $(G,\alpha)$, where $G$ is a plane graph and $\alpha$ is a $10$-coloring of $G$. We say that a sequence of recolorings from $\alpha$ to some coloring $\gamma$ of $G$ is
\emph{valid} if $\gamma$ uses only colors $1, \dots, 9$ and every vertex $v$ of $G$ is recolored either at most once (to the color $\gamma(v)$) or exactly twice, first to the color $10$ and then to 
the color $\gamma(v)$. We say that the scene $(G, \alpha)$ is \emph{recolorable} if $G$ admits a valid sequence of recolorings starting from $\alpha$.  

 The scene $(G, \alpha)$  is said to be a \emph{minimal counterexample} if $(G, \alpha)$ is not recolorable and all scenes $(G', \beta)$ such that
 \begin{itemize}
 \item $|V(G')| < |V(G)|$, or
 \item $|V(G')| = |V(G)|$ and $|E(G') > |E(G)|$, or
 \item $G' = G$ and $|\beta^{-1}(10)| > |\alpha^{-1}(10)|$
 \end{itemize}
are recolorable. 
 
Our aim will be to exclude the existence of a minimal counterexample, which will prove Theorem \ref{thm:9colors}. We begin with an easy proposition.

\begin{lemma}\label{prop:1}
If $(G, \alpha)$ is a minimal counterexample, then $G$ is a triangulation and the color $10$ appears in the closed neighbourhood of every vertex of~$G$ under $\alpha$. 
\end{lemma}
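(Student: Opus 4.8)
The plan is to prove both parts by exploiting minimality --- if either conclusion fails, we construct a strictly smaller scene (fewer vertices, or more edges, or more vertices colored $10$) that is recolorable, and then lift a valid sequence for it back to $(G,\alpha)$, contradicting that $(G,\alpha)$ is not recolorable.

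First I would show the color $10$ appears in the closed neighbourhood of every vertex. Suppose some vertex $v$ has $10 \notin \alpha(N[v])$. Form $\beta$ from $\alpha$ by recoloring $v$ to $10$; this is a proper $10$-coloring of the same graph $G$ with $|\beta^{-1}(10)| = |\alpha^{-1}(10)| + 1$, so by minimality $(G,\beta)$ is recolorable. Take a valid sequence for $(G,\beta)$. The issue is whether prepending the single recoloring $\alpha \to \beta$ (which recolors $v$ to $10$) yields a valid sequence for $(G,\alpha)$: we need $v$ to be recolored in the right pattern overall. In the valid sequence for $\beta$, vertex $v$ starts at color $10$, so it is recolored either zero times (impossible, since the final coloring avoids $10$) or exactly once, to its final color $\gamma(v) \neq 10$. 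Prepending $\alpha \to \beta$ then recolors $v$ exactly twice: first to $10$, then to $\gamma(v)$ --- precisely the allowed pattern. Every other vertex is untouched by the prepended step, so its recoloring pattern is unchanged. Hence $(G,\alpha)$ is recolorable, a contradiction.

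Next I would show $G$ is a triangulation. It suffices to show $G$ is edge-maximal planar, i.e. that we cannot add an edge keeping planarity. Suppose $uv \notin E(G)$ but $G + uv$ is planar. If $\alpha(u) \neq \alpha(v)$, then $\alpha$ is a proper $10$-coloring of $G' = G + uv$, and $(G', \alpha)$ is a smaller scene in the ordering (same vertex count, strictly more edges), hence recolorable; but any valid sequence of recolorings for $(G',\alpha)$ is also a valid sequence for $(G,\alpha)$, since $G$ has fewer edges and every coloring proper for $G'$ is proper for $G$ --- contradiction. So we may assume $\alpha(u) = \alpha(v)$ for every such non-adjacent pair. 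Now combine this with the first part: since $G$ is a plane graph that is not a triangulation, some face is bounded by a cycle of length at least $4$, say with non-adjacent vertices $u,v$ on it, and adding $uv$ inside this face keeps planarity, forcing $\alpha(u)=\alpha(v)$. I would argue this leads to a contradiction by finding such a pair that must receive distinct colors --- for instance, using the closed-neighbourhood property: pick a vertex $v$ on a face of length $\geq 4$; the two neighbours $u_1, u_2$ of $v$ on that face are non-adjacent (else the face would be a triangle or we could chord it), so $\alpha(u_1) = \alpha(u_2)$, and more generally all non-neighbours of $v$ lying on faces incident with $v$ share $v$'s... actually they must all get the same color, and one can push this to a global contradiction with the existence of color $10$ near $v$, or simply re-triangulate $G$ arbitrarily to get a triangulation $G'$ with a proper $10$-coloring extending $\alpha$ only if no monochromatic edge is created.

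The cleanest route for the triangulation part, which I expect to be the main technical point, is this: if $G$ is not a triangulation, add edges one at a time inside non-triangular faces to reach a triangulation $G'$; we need the original coloring $\alpha$ to remain proper, which requires that at each step the two endpoints being joined have different colors. The obstacle is that a priori we might be forced to join two equally-colored vertices. To handle this, I would instead not insist on keeping $\alpha$ fixed: if at some point every way of triangulating a face $f$ creates a monochromatic edge, then the vertices of $f$ use at most a bounded number of colors in a constrained pattern, and I can first recolor one vertex of $f$ to color $10$ (legitimate if $10$ is absent from its neighbourhood --- but by the first part it is present!). So in fact the first part of the lemma is exactly what makes the naive approach work: once every closed neighbourhood sees color $10$, a short case analysis on a shortest non-triangular face shows two of its vertices are non-adjacent and differently colored, letting us add that edge and invoke minimality on $G + uv$ with coloring $\alpha$ unchanged. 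That contradiction completes the proof.
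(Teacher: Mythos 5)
Your treatment of the color-$10$ claim is correct and matches the paper's: recolor $v$ to $10$, apply minimality (more $10$-colored vertices), and prepend the single recoloring so that $v$ gets the allowed pattern of being recolored exactly twice, first to $10$. The triangulation part, however, has a genuine gap. You are missing the paper's key move, which is to \emph{identify} the two non-adjacent face vertices $u$ and $v$ when $\alpha(u)=\alpha(v)$, rather than trying to add the edge $uv$. Identification yields a planar graph $G'$ on strictly fewer vertices, so minimality applies via the first clause; $\alpha$ descends to a proper coloring of $G'$ (any common neighbour of $u$ and $v$ already avoids the shared color by properness of $\alpha$ on $G$), and a valid recoloring sequence for $(G',\alpha)$ lifts to $(G,\alpha)$ by performing each recoloring of the merged vertex on both $u$ and $v$.

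Your alternative --- always find a non-adjacent, differently-colored pair on a shortest non-triangular face, possibly with the help of the color-$10$ claim --- does not work. On a $4$-face $v_1v_2v_3v_4$, the only candidate non-adjacent pairs are $\{v_1,v_3\}$ and $\{v_2,v_4\}$, and nothing forbids $\alpha(v_1)=\alpha(v_3)$ and $\alpha(v_2)=\alpha(v_4)$ simultaneously; for instance $\alpha(v_1)=\alpha(v_3)=10$ and $\alpha(v_2)=\alpha(v_4)=1$ is a proper $2$-coloring of the face with $10$ in every closed neighbourhood. So no chord of that face can be added, and the ``short case analysis'' you invoke does not exist. Your proposed escape --- recolor a face vertex to $10$ --- fails for exactly the reason you note: by the first part, $10$ already appears in every closed neighbourhood, so such a recoloring is never legal. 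The paper avoids the entire difficulty by building the second reduction (identification) into the dichotomy $\alpha(u)\neq\alpha(v)$ (add the edge) versus $\alpha(u)=\alpha(v)$ (identify); this is the idea your proposal lacks, and without it the triangulation claim is not established.
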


\begin{proof}
Suppose that $G$ is not a triangulation; then for some face $f$ of $G$, there exist distinct non-adjacent vertices $u$ and $v$ incident with $f$. If $\alpha(u) \not = \alpha(v)$, we insert the edge $uv$. If $\alpha(u) = \alpha(v)$ we identify $u$ and $v$ into a new vertex $u'$.  
The resulting graph $G'$ is planar and, by minimality, $(G', \alpha)$ is recolorable (we consider $\alpha$ to be a coloring of $G'$ by defining $\alpha(u') = \alpha(u) = \alpha(v)$). As any valid sequence of recolorings in $G'$ easily translates into a valid sequence of recolorings in $G$, this shows that $G$ must be a triangulation. 
 
Suppose that the color $10$ does not appear on some vertex $v$ of $G$ or any of its neighbors. Recolor $v$ to the color $10$ and let $\alpha'$ denote the resulting coloring. By minimality, $(G, \alpha')$ is recolorable. It follows, by definition, that $(G, \alpha)$ is recolorable. 
\end{proof}

We now analyze the structure of a minimal counterexample $(G,\alpha)$ by showing that $G$ cannot contain a number of induced subgraphs
whose vertices are of prescribed degrees (here and in Section \ref{sec:counter}). 
Afterwards, we will show that no such minimal counterexample exists using the discharging method (see Section \ref{sec:discharging}).    

Let $H$ be an induced subgraph of $G$.  By the minimality of $(G,\alpha)$, there exists a valid sequence of recolorings in $G-V(H)$ from the restriction of $\alpha$ to $G-V(H)$
to some coloring $\gamma$ of $G-V(H)$. Let us define a list assignment $L^H$ for $H$ by setting
$$
L^H(v) = \{1, \dots, 9\} \setminus \bigg(\bigcup_{u \in N_G(v)\setminus V(H)} \{\alpha(u), \gamma(u)\} \bigg)
$$ 
for each $v \in V(H)$.  We say that $L^H$ is an \emph{assignment of available colors} to $H$ in $(G,\alpha)$; let us remark that there may be several different
assignments of available colors, corresponding to different colorings of $G-V(H)$.

We have the following proposition. A sequence of recolorings of $H$ is said be a \emph{once-only recoloring} if every vertex of $H$ is recolored at most once.
The induced subgraph $H$ of $G$ is said to be \emph{reducible} in $(G,\alpha)$ if for every assignment of available colors $L^H$ to $H$,
there exists a once-only recoloring of $H$ from the restriction of $\alpha$ to some $L^H$-coloring of $H$. 

\begin{lemma}\label{prop:2}
In a minimal counterexample $(G, \alpha)$, no induced subgraph of $G$ is reducible. 
\end{lemma}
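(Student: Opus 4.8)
The plan is to argue by contradiction: suppose $H$ is a reducible induced subgraph of the minimal counterexample $(G,\alpha)$, and produce from it a valid sequence of recolorings of $G$ starting from $\alpha$, contradicting the assumption that $(G,\alpha)$ is not recolorable. The idea is to recolor $G$ in two stages, handling $G - V(H)$ first and then $H$, combining the valid sequence guaranteed for $G - V(H)$ by minimality with the once-only recoloring guaranteed for $H$ by reducibility.

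More precisely, first I would invoke minimality of $(G,\alpha)$: since $G - V(H)$ has fewer vertices than $G$, the scene $(G - V(H), \alpha\restriction_{G-V(H)})$ is recolorable, so there is a valid sequence of recolorings from $\alpha\restriction_{G-V(H)}$ to some coloring $\gamma$ of $G - V(H)$ using only colors $1,\dots,9$. This $\gamma$ determines the assignment of available colors $L^H$ to $H$ as in the definition preceding the statement. By reducibility of $H$, there is a once-only recoloring of $H$ from $\alpha\restriction_{V(H)}$ to some $L^H$-coloring $\delta$ of $H$. The key point is that the colors in $L^H(v)$ are precisely those in $\{1,\dots,9\}$ that conflict with neither $\alpha(u)$ nor $\gamma(u)$ for any neighbor $u$ of $v$ outside $H$; this is exactly what makes the two stages compatible.

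Next I would stitch the two sequences together. Perform the valid recoloring sequence on $G - V(H)$ first, keeping every vertex of $H$ at its original color $\alpha$. I must check each such step stays proper in $G$: a step recolors some $u \notin V(H)$ either to $10$ or to a color in $\{1,\dots,9\}$, and since $H$'s vertices still carry their $\alpha$-colors and $10$ never appears on $H$ after this stage (vertices of $H$ keep color $\alpha$, which may be $10$) — here I need to be slightly careful, but the definition of $L^H$ excludes $\alpha(u)$ for $u \in N_G(v)\setminus V(H)$, so a vertex $v\in V(H)$ with $\alpha(v) = 10$ forces nothing since $10\notin\{1,\dots,9\}$ anyway, and properness of edges between $H$ and $G-V(H)$ during this stage is inherited from $\alpha$ being proper on $G$; edges within $G-V(H)$ are handled by validity of that sequence. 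After this stage we have reached $\gamma$ on $G - V(H)$ and $\alpha$ on $H$. Now perform the once-only recoloring of $H$ toward $\delta$: each such step recolors a vertex $v\in V(H)$ to a color in $L^H(v) \subseteq \{1,\dots,9\}$ (or along the way to other colors in $\{1,\dots,9\}$, since a once-only recoloring of $H$ from $\alpha\restriction_{V(H)}$ uses only colors appearing in the $L^H$-coloring endpoints — actually I should phrase "once-only recoloring" so intermediate colors also lie in $L^H$; re-reading the definition, the once-only recoloring goes from $\alpha$ to an $L^H$-coloring recoloring each vertex at most once, so each recolored vertex lands directly in $L^H(v)$). Properness within $H$ holds because it is a recoloring of $H$; properness on edges from $v\in V(H)$ to $u\notin V(H)$ holds because $\delta(v)\in L^H(v)$ avoids $\gamma(u)$, which is the current color of $u$. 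Finally, the combined sequence recolors each vertex of $G$ in accordance with the definition of validity: vertices of $G-V(H)$ follow the valid sequence (at most once, or twice via $10$), and vertices of $H$ are recolored at most once to a color distinct from $10$. Hence $(G,\alpha)$ is recolorable — a contradiction.

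The main obstacle I anticipate is bookkeeping around the color $10$ and the exact form of "validity": I need to confirm that concatenating a valid sequence on $G - V(H)$ with a once-only (10-free) recoloring on $H$ still satisfies the global "at most once, or twice-first-to-$10$" condition for every vertex, and that no intermediate coloring in either stage becomes improper across the $H$ / $(G-V(H))$ boundary. The boundary-properness checks reduce to the defining property of $L^H$ (it removes both $\alpha(u)$ and $\gamma(u)$ for external neighbors $u$) together with the fact that $\alpha$ and the target colorings are proper; these are routine once set up, but they are the place where one must be precise. Everything else is a direct concatenation argument.
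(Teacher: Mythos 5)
Your overall plan is right---combine the valid recoloring of $G-V(H)$ obtained from minimality with the once-only recoloring of $H$ obtained from reducibility---but the order in which you concatenate them is wrong, and this is not a minor bookkeeping issue. You recolor $G-V(H)$ first (while $H$ stays at $\alpha$) and then recolor $H$. During that first stage a vertex $u\in V(G)\setminus V(H)$ may be recolored to $\gamma(u)$ or to the intermediate color $10$, and nothing in the definition of $L^H$ prevents $\gamma(u)=\alpha(v)$ for a neighbor $v\in V(H)$, nor prevents $u$ from passing through color $10$ while a neighbor $v\in V(H)$ has $\alpha(v)=10$. Recall $L^H(v)$ removes $\alpha(u)$ and $\gamma(u)$ (and implicitly $10$) from the colors that $v$ \emph{ends up with}; it says nothing about $\alpha(v)$. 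Your assertion that ``properness of edges between $H$ and $G-V(H)$ during this stage is inherited from $\alpha$ being proper on $G$'' is exactly where the argument breaks: $\alpha$ being proper only rules out the \emph{initial} conflict $\alpha(u)=\alpha(v)$, not a conflict created once $u$ has been moved.

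The paper avoids this by doing the two stages in the opposite order: first perform the once-only recoloring $\sigma'$ of $H$, so that each $v\in V(H)$ is moved directly to a color $\gamma_H(v)\in L^H(v)$. At that moment every external neighbor $u$ is still at $\alpha(u)\notin L^H(v)$, so there is no conflict. Then perform the valid sequence $\sigma$ on $G-V(H)$; each $u$ is only ever recolored to $10$ or $\gamma(u)$, and both are outside $L^H(v)$, so again no conflict with the now-fixed colors on $H$. In other words, $L^H$ is designed so that the \emph{target} color of each $v\in V(H)$ is simultaneously safe against the past and future colors of its external neighbors---but that protection only kicks in after $v$ has actually been recolored, which is why $H$ must be handled first. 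Swapping to that order fixes your proof and recovers the paper's argument.
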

\begin{proof}
Let $H$ be an induced subgraph of $G$. By minimality, $G-V(H)$ has a valid sequence of recolorings $\sigma$ to some coloring $\gamma$.
Let $L^H$ be the corresponding assignment of available colors to $H$.
Suppose for a contradiction $H$ is reducible. Then there exists a once-only recoloring $\sigma'$ of $H$ from the restriction of $\alpha$ to some $L^H$-coloring $\gamma_H$ of $H$.
But $\sigma'$ followed by $\sigma$ is a valid sequence of recolorings in $G$.  Indeed, recoloring of a vertex $v\in V(H)$ according to $\sigma'$ does not conflict with the colors of its neighbors
in $G-V(H)$, since if $u\in V(G)\setminus V(H)$ and $uv\in E(G)$, then $\alpha(u)\not\in L^H(v)$.  Afterwards, recolorings of $u\in V(G)\setminus V(H)$ do not conflict with the color
of its neighbors $v\in V(H)$, since $u$ can only be recolored to $10$ or $\gamma(u)$ and neither of these colors belongs to $L^H(v)$.  This is a contradiction.
\end{proof}

It is often convenient to focus just on the sizes of the lists.  For a function $s:X\to\mathbb{N}$ with $V(H)\subseteq X$, we say that a list assignment $L$ for $H$ is an \emph{$s$-list assignment} if
$|L(v)|\ge s(v)$ for every $v\in V(H)$.
Let
$$s^H_G(v)=9 - 2(\deg_G v - \deg_H v)$$ and
$$s^H_{G,\alpha}(v)=9 - 2(\deg_G v - \deg_H v)+|(N_G(v)\cap\alpha^{-1}(10))\setminus V(H)|$$
for $v\in V(H)$.

\begin{remark}\label{rem:size}
Notice, by definition, that any assignment of available colors to $H$ in $(G,\alpha)$ is an $s^H_{G,\alpha}$-list assignment, and thus also an $s^H_G$-list assignment.
\end{remark}

A \emph{motif} $M$ consists of a graph $H_M$, a $10$-coloring $\alpha_M$ of $H_M$, and an assignment $L_M$ of subsets of $\{1,\ldots,9\}$ to vertices of $H_M$.
For an induced subgraph $F$ of $H$, a motif $M'$ is an \emph{$F$-restriction} of $M$ if $H_{M'}=F$, $\alpha_{M'}$ is the restriction of $\alpha_M$ to $F$,
and $L_{M'}(v)\subseteq L_{M}(v)$ for $v\in V(F)$.
The motif $M$ is \emph{oo-recolorable (to $\gamma$)} if there exists a once-only recoloring of $H_M$ from $\alpha_M$ to an $L_M$-coloring $\gamma$ of $H_M$.
For a scene $(G,\alpha)$ and an induced subgraph $H$ of $G$, we say a motif $M$ is \emph{induced by $H$} if
$H_M=H$ and $\alpha_M$ is the restriction of $\alpha$ to $H$, and $L_H$ is an $s^H_{G,\alpha}$-list assignment.
We use the following easy consequence of Lemma~\ref{prop:2} and Remark~\ref{rem:size} to constrain minimal counterexamples.

\begin{lemma}\label{lemma:redu}
Let $(G, \alpha)$ be a minimal counterexample.  If $H$ is an induced subgraph $H$ of $G$, then there exist a motif $M$ induced by $H$ in $(G,\alpha)$
that is not oo-recolorable.
\end{lemma}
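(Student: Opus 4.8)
The plan is to derive Lemma~\ref{lemma:redu} as an essentially formal consequence of Lemma~\ref{prop:2} together with Remark~\ref{rem:size}, by unwinding the definitions of \emph{reducible}, \emph{oo-recolorable}, and \emph{induced motif}. First I would argue by contraposition: suppose that for every motif $M$ induced by $H$ in $(G,\alpha)$, $M$ is oo-recolorable; I then want to conclude that $H$ is reducible, which by Lemma~\ref{prop:2} contradicts the minimality of $(G,\alpha)$. To see that $H$ is reducible, I must show that for an \emph{arbitrary} assignment of available colors $L^H$ to $H$ in $(G,\alpha)$, there is a once-only recoloring of $H$ from the restriction of $\alpha$ to some $L^H$-coloring of $H$.

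So fix such an assignment of available colors $L^H$. The key observation, which is exactly Remark~\ref{rem:size}, is that $L^H$ is an $s^H_{G,\alpha}$-list assignment for $H$. Consequently the triple $M \coloneqq (H, \alpha\!\restriction_H, L^H)$ is, by definition, a motif induced by $H$ in $(G,\alpha)$: we have $H_M = H$, $\alpha_M$ is the restriction of $\alpha$ to $H$, and $L_M = L^H$ is an $s^H_{G,\alpha}$-list assignment. By the assumption of the contrapositive, this motif $M$ is oo-recolorable, i.e.\ there exists a once-only recoloring of $H_M = H$ from $\alpha_M = \alpha\!\restriction_H$ to some $L_M$-coloring $\gamma$ of $H$; since $L_M = L^H$, this $\gamma$ is an $L^H$-coloring and the recoloring is a once-only recoloring from the restriction of $\alpha$ to $\gamma$. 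As $L^H$ was arbitrary, $H$ is reducible, contradicting Lemma~\ref{prop:2}. Hence some motif induced by $H$ in $(G,\alpha)$ fails to be oo-recolorable, which is the claim.

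There is essentially no hard step here: the whole content is bookkeeping to check that the definition of ``motif induced by $H$'' is broad enough to capture the motif built from an arbitrary assignment of available colors, which is precisely what Remark~\ref{rem:size} guarantees. The one point that requires a moment of care is the logical direction: Lemma~\ref{prop:2} says \emph{no} induced subgraph is reducible, and ``reducible'' quantifies universally over \emph{all} assignments of available colors, whereas the conclusion of Lemma~\ref{lemma:redu} only asks for \emph{one} bad motif; so I must make sure the bad motif I produce is genuinely an induced motif (it is, by Remark~\ref{rem:size}) and that a once-only recoloring witnessing oo-recolorability of that motif is literally a once-only recoloring witnessing reducibility with respect to the corresponding $L^H$ (it is, since the list assignment of the motif \emph{equals} $L^H$, not merely contains it). I would write this up in three or four sentences, citing Lemma~\ref{prop:2} and Remark~\ref{rem:size} explicitly and spelling out the identification $M = (H,\alpha\!\restriction_H,L^H)$.
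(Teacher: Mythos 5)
Your argument is correct and uses exactly the same ingredients as the paper's proof, namely Lemma~\ref{prop:2} together with Remark~\ref{rem:size} and the identification of an assignment of available colors $L^H$ with the motif $(H,\alpha\!\restriction_H,L^H)$ induced by $H$. The only difference is that you phrase it contrapositively while the paper argues directly from the non-reducibility of $H$; that is a cosmetic distinction, not a different approach.
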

\begin{proof}
Let $\alpha_H$ be the restriction of $\alpha$ to $H$.
By Lemma~\ref{prop:2}, $H$ is not reducible, and thus for some assignment $L^H$ of available colors to $H$ in $(G,\alpha)$,
there does not exist any once-only recoloring from $\alpha_H$ to an $L^H$-coloring of $H$.
Let $M$ be the motif with $H_M=H$, $\alpha_M=\alpha_H$, and $L_M=L^H$.  Then $M$ is not oo-recolorable,
and since $L^H$ is an $s^H_{G,\alpha}$-list assignment by Remark~\ref{rem:size}, the motif $M$ is induced by $H$.
\end{proof}

In the next section, we show a number of motifs that are oo-recolorable, and thus they cannot be induced in a minimal counterexample.
Before we do that, let us point out the aspects of our argument that we consider to be novel:  Our original plan was to restrict
ourselves to once-only recolorings; this enables us to apply the method of reducible configurations which has not been previously
used in the area, since we only need to forbid two colors (the initial and the final color) per neighbor outside of the configuration.
A bit of a breakthrough for us then was the seemingly counterintuitive notion of valid sequences of recolorings,
where we introduce new vertices of color 10 in order to eventually eliminate the color 10.
This enables us to assume that color $10$ appears in the closed neighborhood of every vertex,
which is extremely useful in proving the reducibility of configurations.

\section{Structure of minimal counterexample}\label{sec:counter}

In this section, we show in a series of lemmas that if $(G,\alpha)$ is a minimal counterexample,
then $G$ has minimum degree at least five and does not contain any of the graphs in Figure \ref{fig:reducible} as induced subgraphs with the prescribed degrees of vertices.
Let us start with a trivial observation.

\begin{observation}\label{obs-start}
Suppose $M$ is a motif.  If $|V(H_M)|=1$ and $|L_M(v)|\ge 1$ for the unique vertex $v\in V(H_M)$, then $M$ is oo-recolorable.
\end{observation}

\begin{corollary}\label{cor-deg}
If $(G, \alpha)$ is a minimal counterexample, then $G$ has minimum degree at least five.
\end{corollary}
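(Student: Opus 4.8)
The plan is to apply Lemma~\ref{lemma:redu} together with Observation~\ref{obs-start} to a single low-degree vertex. Suppose for contradiction that $(G,\alpha)$ is a minimal counterexample with a vertex $v$ of degree at most four. Take $H$ to be the induced subgraph on the single vertex $\{v\}$, so that $\deg_H v = 0$ and $H_M = H$ consists of one vertex. By Lemma~\ref{lemma:redu}, there is a motif $M$ induced by $H$ in $(G,\alpha)$ that is not oo-recolorable; in particular $L_M$ is an $s^H_{G,\alpha}$-list assignment for $H$. Since $M$ fails to be oo-recolorable and $|V(H_M)| = 1$, Observation~\ref{obs-start} forces $|L_M(v)| = 0$, i.e.\ $|L_M(v)| \le 0$.

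On the other hand, I would compute the lower bound on $|L_M(v)|$ coming from the definition of $s^H_{G,\alpha}$. Here $\deg_G v - \deg_H v = \deg_G v \le 4$, so
$$s^H_{G,\alpha}(v) = 9 - 2\deg_G v + |(N_G(v)\cap \alpha^{-1}(10))\setminus V(H)| \ge 9 - 8 + 0 = 1,$$
using $\deg_G v \le 4$. Hence $|L_M(v)| \ge s^H_{G,\alpha}(v) \ge 1$, contradicting $|L_M(v)| = 0$. Therefore no such vertex $v$ exists, and $G$ has minimum degree at least five.

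The only subtlety—really the only place any thought is required—is checking that the single-vertex induced subgraph is a legitimate choice of $H$ in Lemma~\ref{lemma:redu} and that the size bound from Remark~\ref{rem:size} applies to it; both are immediate from the statements as given. (One could even sharpen the argument by noting that $v$ has a neighbor colored $10$ by Lemma~\ref{prop:1}, but this is not needed: the bound $9 - 2\cdot 4 = 1 > 0$ already suffices.) I do not anticipate any real obstacle; this corollary is the base case that launches the discharging analysis, and the heavier work will be in handling larger configurations in the subsequent lemmas.
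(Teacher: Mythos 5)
Your proposal is correct and matches the paper's argument exactly: apply Lemma~\ref{lemma:redu} to the single-vertex subgraph $\{v\}$, deduce $|L_M(v)|=0$ from Observation~\ref{obs-start}, and contradict this with the lower bound $|L_M(v)|\ge 9-2\deg_G v$. The paper uses the slightly weaker bound $s^v_G$ rather than $s^v_{G,\alpha}$, but this is the same computation.
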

\begin{proof}
Consider a vertex $v\in V(G)$.  By Lemma~\ref{lemma:redu}, there exists a motif $M$ induced by $v$ that is not oo-colorable, and thus $|L_M(v)|=0$ by Observation~\ref{obs-start}.
But $|L_M(v)|\ge s^v_G(v)=9-2\deg_G v$, implying $\deg_G v\ge 5$.
\end{proof}
 
In order to facilitate the proofs that the graphs in Figure \ref{fig:reducible} are reducible, we first require a number of auxiliary lemmas.
Consider a motif $M$.  For brevity, let $V(M)=V(H_M)$, and for $v\in V(M)$, let $N_M(v)=N_{H_M}(v)$ and $\deg_M v=\deg_{H_M} v$.
Let us also define $\deg'_M(v)=\deg_M v-|\alpha^{-1}(10)\cap N_M(v)|$ as the number of neighbors of $v$ in $M$ whose color is not $10$.
For a vertex $v\in V(M)$, let $M-v$ denote the $(H_M-v)$ restriction of $M$ with $L_{M-v}$ equal to the restriction of $L_M$ to $H_M-v$.

\begin{lemma}\label{lem:2d}
Let $M$ be a motif and let $v$ be a vertex of $M$.
If $|L_M(v)| > \deg_M v +\deg'_M v$ and $M-v$ is oo-recolorable, then $M$ is oo-recolorable.  
\end{lemma}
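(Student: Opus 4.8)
The plan is to turn a once-only recoloring of $H_M-v$ into one of $H_M$ by prepending a single recoloring of $v$. Since $M-v$ is oo-recolorable, fix a once-only recoloring $\sigma$ of $H_M-v$ from the restriction of $\alpha_M$ to some $L_{M-v}$-coloring $\gamma'$ of $H_M-v$. Because $\sigma$ recolors every vertex at most once, each $u\in V(H_M)\setminus\{v\}$ carries the color $\alpha_M(u)$ until (if ever) it is recolored, and is then recolored straight to $\gamma'(u)$, after which it keeps that color. Hence, throughout the run of $\sigma$, a neighbor $u$ of $v$ only ever has a color in $\{\alpha_M(u),\gamma'(u)\}$.

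First I would pick the color for $v$. Let
$$A=\{\alpha_M(u): u\in N_M(v),\ \alpha_M(u)\ne 10\}\cup\{\gamma'(u):u\in N_M(v)\}.$$
The first set has size at most $\deg'_M v$ and the second at most $\deg_M v$; crucially, a neighbor $u$ with $\alpha_M(u)=10$ contributes nothing to $A$, since $10\notin L_M(v)\subseteq\{1,\dots,9\}$, and such a neighbor is necessarily recolored by $\sigma$ (its final color $\gamma'(u)$ lies in $\{1,\dots,9\}$). Thus $|A|\le\deg_M v+\deg'_M v<|L_M(v)|$, so there is a color $c\in L_M(v)\setminus A$.

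Then I would form the sequence: recolor $v$ to $c$ (a vacuous step if $c=\alpha_M(v)$), and afterwards run $\sigma$. Recoloring $v$ to $c$ keeps the coloring proper, since every neighbor of $v$ presently has color $\alpha_M(u)\ne c$ (by the choice of $c$ when $\alpha_M(u)\ne 10$, and trivially otherwise). During $\sigma$, whenever a neighbor $u$ of $v$ is recolored it goes to $\gamma'(u)\ne c$, and before that it has $\alpha_M(u)\ne c$, so $v$ never conflicts with a neighbor; and the steps of $\sigma$ stay proper on $H_M-v$ by assumption. Every vertex is recolored at most once, the final coloring equals $\gamma'$ on $V(H_M)\setminus\{v\}$ and assigns $v$ the color $c\in L_M(v)$, hence is an $L_M$-coloring. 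Therefore $M$ is oo-recolorable.

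There is no real obstacle here: the lemma is a short bookkeeping argument, and I expect the author's proof to be essentially the above. The one design decision that matters is to insert $v$'s recoloring at the very beginning, which makes the bound $\deg_M v+\deg'_M v$ valid no matter in what order $\sigma$ happens to touch the neighbors of $v$; the improvement over the naive bound $2\deg_M v$ comes precisely from the fact that color $10$ is never available at $v$.
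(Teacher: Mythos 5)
Your proof is correct and follows essentially the same route as the paper's: pick a color $c\in L_M(v)$ that avoids the at most $\deg_M v+\deg'_M v$ relevant initial and final colors of neighbors (the neighbors with $\alpha_M(u)=10$ being excused because $10\notin L_M(v)$), recolor $v$ to $c$ first, then run the once-only recoloring of $M-v$. The extra bookkeeping you include (tracking that each neighbor only ever shows $\alpha_M(u)$ or $\gamma'(u)$) just spells out what the paper leaves implicit.
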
 
\begin{proof}
By assumptions, $M-v$ is oo-recolorable to some coloring $\gamma$, via a sequence $\sigma$ of recolorings.
Since $|L_M(v)| > \deg_M v +\deg'_M v$ and $10\not\in L_M(v)$, there exists a color $c\in L_M(v)\setminus \bigcup_{u \in N_M(v)}\{\alpha(u), \gamma(u)\}$.
Hence, we can first recolor $v$ to $c$ and then perform the recolorings according to $\sigma$, showing that $M$ is oo-recolorable.
\end{proof}

Similarly, we obtain the following observation.

\begin{lemma}\label{lem:10}
Let $M$ be a motif and let $v$ be a vertex of $M$.
If $\alpha_M(v) = 10$ and $|L_M(v)| > \deg_M v$ and $M-v$ is oo-recolorable, then $M$ is oo-recolorable.  
\end{lemma}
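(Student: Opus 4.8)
The plan is to mirror the proof of Lemma~\ref{lem:2d} essentially verbatim, but now exploiting the fact that $v$ has color $10$ under $\alpha_M$. Since $v$ currently carries color $10$, which does not lie in $L_M(v)$, the vertex $v$ must be recolored at least once; and because we only allow once-only recolorings of $H_M$, it will be recolored \emph{exactly} once, to some color of $L_M(v)$. The key observation is that when we recolor $v$ first (before touching any other vertex of $M$), the only constraint coming from a neighbor $u$ of $v$ is its \emph{current} color $\alpha_M(u)$: once $v$ is fixed, each $u$ will be recolored at most once and can simply avoid the chosen color of $v$. So we do not need to avoid the \emph{final} colors $\gamma(u)$ of the neighbors, only their initial colors $\alpha_M(u)$.

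Concretely, I would argue as follows. By hypothesis $M-v$ is oo-recolorable to some coloring $\gamma$ of $H_M-v$ via a sequence $\sigma$ of recolorings. Since $\alpha_M(v)=10$, no neighbor $u\in N_M(v)$ has $\alpha_M(u)=10$, so the set $\{\alpha_M(u): u\in N_M(v)\}\subseteq\{1,\dots,9\}$ has size at most $\deg_M v$. Because $|L_M(v)|>\deg_M v$ and $L_M(v)\subseteq\{1,\dots,9\}$, there is a color $c\in L_M(v)\setminus\{\alpha_M(u): u\in N_M(v)\}$. Recolor $v$ to $c$ first; this is proper since $c$ differs from every current neighbor color. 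Then perform $\sigma$. For each step of $\sigma$ recoloring a vertex $u\in N_M(v)$ to $\gamma(u)$: this is a once-only recoloring of $H_M-v$, so we are free to assume (and in fact the definition of oo-recoloring permits choosing $\gamma$ so that) $\gamma(u)\neq c$ — more precisely, since $v$ is now colored $c$ and $u$ is recolored only once, we need $\gamma(u)\neq c$, which we can guarantee because the definition of oo-recolorability of $M-v$ gives us freedom in the target coloring. Hmm — this last point needs care.

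The subtlety, and what I expect to be the only real obstacle, is exactly whether we are allowed to assume $\gamma(u)\neq c$ for neighbors $u$ of $v$. If $M-v$ is merely asserted to be oo-recolorable to \emph{some} $\gamma$, we have no control over $\gamma(u)$. The clean fix is to note that the statement should be read as: $M-v$ is oo-recolorable (to some $\gamma$), and we then observe that $c$ was chosen \emph{after} $\gamma$ is known, so we should instead pick $c\in L_M(v)\setminus\bigcup_{u\in N_M(v)}\{\alpha_M(u),\gamma(u)\}$ — but that would require $|L_M(v)|>\deg_M v+\deg'_M v$, which is the hypothesis of Lemma~\ref{lem:2d}, not of this lemma. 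So the honest route is the first one: recolor $v$ first, then when running $\sigma$, the only new constraint is that each neighbor $u$ of $v$ must never be recolored to $c$ while $v$ holds $c$; since in a once-only recoloring $v$ keeps $c$ throughout and each such $u$ is recolored at most once, we simply need the target coloring of $M-v$ to avoid $c$ on $N_M(v)$. This is where I would invoke that, in the induced-subgraph application, $c\in L_M(v)$ was already excluded from the available-color lists of neighbors of $v$ — but within the abstract motif framework this is not automatic. I therefore expect the actual proof to either (i) strengthen the hypothesis implicitly via how the lemma is later applied, or (ii) redefine $c$'s selection to also dodge $\gamma(u)$ and correspondingly state the bound as $|L_M(v)|>\deg_M v$ precisely because $v$'s $10$-neighbors contribute nothing and the remaining neighbors contribute their (at most $\deg_M v$) colors $\alpha_M(u)$, with $\gamma(u)$ for a neighbor $u$ of $v$ being forced to avoid $c$ anyway once $v$ is recolored first. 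In short: recolor $v$ once to a safe color, then apply the recoloring of $M-v$; the bookkeeping that ``safe'' only costs $\deg_M v$ colors (not $\deg_M v+\deg'_M v$) is precisely the payoff of $\alpha_M(v)=10$, and making that bookkeeping airtight is the crux.
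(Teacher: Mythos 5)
You have correctly diagnosed the obstacle in the ``recolor $v$ first'' approach, but you never find the fix, and the proposal as written has a genuine gap. The key idea you are missing is to recolor $v$ \emph{last}, not first. Since $\alpha_M(v)=10$ and every list $L_{M-v}(u)$ is a subset of $\{1,\dots,9\}$, the sequence $\sigma$ that oo-recolors $M-v$ to $\gamma$ can be run in $M$ with $v$ untouched: none of the recolorings of $\sigma$ can conflict with $v$, because none of them ever assigns the color $10$. After $\sigma$ is complete, the neighbors of $v$ carry the colors $\gamma(u)$, and $\{\gamma(u): u\in N_M(v)\}$ has size at most $\deg_M v$; since $|L_M(v)|>\deg_M v$, you can pick $c\in L_M(v)\setminus\bigcup_{u\in N_M(v)}\{\gamma(u)\}$ and finish by recoloring $v$ once to $c$. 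This is why the hypothesis needs only $|L_M(v)|>\deg_M v$ and not $|L_M(v)|>\deg_M v+\deg'_M v$: you never have to dodge the \emph{initial} colors $\alpha_M(u)$ at all, because by the time $v$ is recolored those colors are gone.

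In other words, the payoff of $\alpha_M(v)=10$ is not, as you guessed, that the initial colors of $v$'s neighbors are cheap to avoid; it is that $v$'s \emph{own} initial color $10$ is invisible to the recoloring of $M-v$, so $v$ can safely wait until the end. Your instinct to mirror Lemma~\ref{lem:2d} by recoloring $v$ first forces you to avoid both $\alpha_M(u)$ and $\gamma(u)$ with no way to control $\gamma$, which is exactly the dead end you ran into. Reversing the order of operations dissolves the problem.
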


\begin{proof}
By assumptions, $M-v$ is oo-recolorable to some coloring $\gamma$, via a sequence $\sigma$ of recolorings.
We can first perform the recolorings $\sigma$ in $M$, as they do not conflict with the color $10$ of $v$.  Finally, we can
recolor $v$ to a color in $L_M(v)\setminus \bigcup_{u \in N_M(v)}\{\gamma(u)\}$, which exists since $|L(v)| >\deg_M v$.
This shows $M$ is oo-recolorable.
\end{proof}

For a motif $M$, a vertex $v\in V(H_M)$, and a color $c$, let $M-(v\to c)$ denote the $(H_M-v)$-restriction of $M$
with $L_{M-(v\to c)}(u)$ equal to $L_M(u)\setminus c$ for $u\in N_M(v)$ and to $L_M(u)$ for all other vertices.
\begin{lemma}\label{lem:xybas}
Let $M$ be a motif, let $v$ be a vertex of $M$, and consider any color $c\in L_M(v)\setminus\bigcup_{u\in N_M(v)} \{\alpha(u)\}$.
If the motif $M-(v\to c)$ is oo-recolorable, then $M$ is oo-recolorable.  
\end{lemma}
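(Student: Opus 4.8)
The plan is to reduce an oo-recoloring of $M$ to one of $M-(v\to c)$ by recoloring $v$ first (once) to the color $c$ and never touching it again. Concretely, suppose $M-(v\to c)$ is oo-recolorable, say to a coloring $\gamma'$ of $H_M-v$ via a once-only recoloring $\sigma'$. Since $c\in L_M(v)$ and $c\neq\alpha(u)$ for every neighbor $u\in N_M(v)$, recoloring $v$ from $\alpha_M(v)$ to $c$ is a legal single recoloring step in $H_M$: it produces a proper coloring because no neighbor of $v$ currently has color $c$. After this step, delete $v$ and run $\sigma'$ on $H_M-v$; I must check this is still legal and once-only in $H_M$.

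The key point is that $\sigma'$ never conflicts with the (now fixed) color $c$ of $v$: for each neighbor $u\in N_M(v)$, the list $L_{M-(v\to c)}(u)=L_M(u)\setminus\{c\}$ does not contain $c$, so $\sigma'$ never recolors $u$ to $c$, and hence the color of $v$ is never violated. For vertices not adjacent to $v$, nothing changes. Thus prepending the single step $v\mapsto c$ to $\sigma'$ yields a valid sequence of recolorings of $H_M$ from $\alpha_M$. It is once-only: $v$ is recolored exactly once, and every other vertex is recolored at most once since $\sigma'$ is once-only. Finally, the resulting coloring $\gamma$ of $H_M$ (namely $\gamma(v)=c$ and $\gamma(w)=\gamma'(w)$ otherwise) is an $L_M$-coloring: $\gamma(v)=c\in L_M(v)$ and for $w\neq v$ we have $\gamma(w)=\gamma'(w)\in L_{M-(v\to c)}(w)\subseteq L_M(w)$. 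Hence $M$ is oo-recolorable to $\gamma$.

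I do not anticipate a genuine obstacle here; the only thing to be careful about is the direction of the argument (we recolor $v$ \emph{first}, so that its final color $c$ is pre-committed before the rest of the configuration is processed), and the bookkeeping that shrinking the neighbors' lists by $\{c\}$ is exactly what guarantees $\sigma'$ stays consistent with that committed color. This mirrors the structure of the proof of Lemma~\ref{lem:2d}, with the role of ``a color avoiding $\alpha$ and $\gamma$ on the neighborhood'' replaced by ``a color avoiding $\alpha$ on the neighborhood, with $\gamma$ on the neighborhood forced to avoid it by the list restriction.''
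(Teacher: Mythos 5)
Your proof is correct and follows exactly the same approach as the paper: recolor $v$ once to $c$ up front (legal since no neighbor currently has color $c$), then run the once-only recoloring of $M-(v\to c)$, using the fact that $c$ has been removed from the lists of $v$'s neighbors to guarantee no later conflict with $v$. Your write-up just spells out the bookkeeping a bit more explicitly than the paper does.
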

\begin{proof}
By assumptions, $M-(v\to c)$ is oo-recolorable via a sequence $\sigma$ of recolorings.
We can first recolor $v$ to $c$ (since no neighbor of $v$ has color $c$) and then perform the recolorings $\sigma$ in $C$.
For a neighbor $u$ of $v$, the recoloring of $u$ according to $\sigma$ does not conflict with the color $c$, since $c\not\in L_{M-(v\to c)}(u)$.
This shows $M$ is oo-recolorable.
\end{proof}

Lemma~\ref{lem:xybas} has the following useful consequence.  For a motif $M$ and a vertex $v\in V(M)$, let
$j_M:V(M)\to\mathbb{N}$ denote the function such that $j_M(u)=1$ if $u\in N_M(v)$ and $|L_M(u)|\ge 2$ and $j_M(u)=0$ otherwise.
Let $s_M:V(M)\to\mathbb{N}$ be defined by $s_M(v)=|L_M(v)|$ for $v\in V(M)$.

\begin{lemma}\label{lem:xycons}
Let $M$ be a motif and let $v$ be a vertex of $M$ such that $|L_M(v)| > \deg'_M v+|\{u \in N_M(v): |L_M(u)|=1\}|$.
If $M$ is not oo-recolorable, then there exists an $(H_M-v)$-restriction $M'$ of $M$
such that $L_{M'}$ is an $(s_M-j_M(v))$-list assignment and $M'$ is not oo-recolorable.
\end{lemma}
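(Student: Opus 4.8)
The plan is to derive the lemma directly from Lemma~\ref{lem:xybas} by choosing a single colour $c\in L_M(v)$ that does two jobs at once.  On the one hand, $c$ must lie in $L_M(v)\setminus\bigcup_{u\in N_M(v)}\{\alpha_M(u)\}$, so that Lemma~\ref{lem:xybas} applies to it; on the other hand, $c$ must avoid $L_M(u)$ for every neighbour $u$ of $v$ with $|L_M(u)|=1$, so that deleting $c$ from the neighbours' lists does not shrink any singleton list.  Once such a $c$ is found, the desired restriction will be $M':=M-(v\to c)$.

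The first step is to count the colours that $c$ must avoid.  Since $L_M(v)\subseteq\{1,\dots,9\}$, every $c\in L_M(v)$ automatically has $c\neq 10$, so the requirement $c\neq\alpha_M(u)$ for all $u\in N_M(v)$ only excludes the colours $\alpha_M(u)$ arising from neighbours $u$ with $\alpha_M(u)\neq 10$, of which there are at most $\deg'_M v$.  The requirement $c\notin L_M(u)$ for neighbours $u$ with $|L_M(u)|=1$ excludes at most one further colour per such $u$, hence at most $|\{u\in N_M(v):|L_M(u)|=1\}|$ colours in total.  Allowing for overlaps, at most $\deg'_M v+|\{u\in N_M(v):|L_M(u)|=1\}|$ colours are excluded, and this is strictly less than $|L_M(v)|$ by hypothesis, so a suitable $c\in L_M(v)$ exists.

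It then remains to check that $M':=M-(v\to c)$ has the required properties.  By the definition of $M-(v\to c)$, it is an $(H_M-v)$-restriction of $M$ with $L_{M'}(u)=L_M(u)\setminus c$ for $u\in N_M(v)$ and $L_{M'}(u)=L_M(u)$ otherwise.  To see that $L_{M'}$ meets the size requirement, I would verify for each $u\in V(H_M)\setminus\{v\}$ that $|L_{M'}(u)|\ge s_M(u)-j_M(u)$, splitting into three cases: if $u\notin N_M(v)$ then $j_M(u)=0$ and the list is unchanged; if $u\in N_M(v)$ with $|L_M(u)|\ge 2$ then $j_M(u)=1$ and removing at most one colour leaves $|L_{M'}(u)|\ge|L_M(u)|-1$; and if $u\in N_M(v)$ with $|L_M(u)|=1$ then $j_M(u)=0$ and, by the choice of $c$, $c\notin L_M(u)$, so the list is unchanged.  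Finally, since $c\in L_M(v)\setminus\bigcup_{u\in N_M(v)}\{\alpha_M(u)\}$ and $M$ is not oo-recolorable, the contrapositive of Lemma~\ref{lem:xybas} yields that $M'$ is not oo-recolorable.  There is no serious obstacle here; the lemma is a piece of bookkeeping built on Lemma~\ref{lem:xybas}, and the one point needing care is that $c\in\{1,\dots,9\}$ allows $\deg_M v$ to be replaced by $\deg'_M v$ in the count --- precisely the slack that the hypothesis provides.
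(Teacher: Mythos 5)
Your proof is correct and takes essentially the same approach as the paper: choose a single colour $c \in L_M(v)$ avoiding both the neighbours' current colours $\alpha_M(u)$ and the singleton lists $L_M(u)$, with existence guaranteed by the counting observation that $c \ne 10$ automatically so only $\deg'_M v$ colour-restrictions (rather than $\deg_M v$) arise from the first constraint, and then set $M' = M - (v \to c)$ and invoke Lemma~\ref{lem:xybas} in contrapositive form. The paper's own proof is a one-line version of exactly this argument.
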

\begin{proof}
By assumptions, there exists a color $c\in L_M(v)\setminus\Bigl(\bigcup_{u\in N_M(v)} \{\alpha(u)\}\cup \bigcup_{u\in N_M(v), |L_M(u)|\le 1} L_M(u)\Bigr)$,
and by Lemma~\ref{lem:xybas}, we can set $M'=M-(v\to c)$.
\end{proof}

In particular, repeatedly applying Lemma~\ref{lem:xycons} until a motif with single vertex is obtained and using Observation~\ref{obs-start},
we have the following consequence.
\begin{corollary}\label{cor-triv}
Let $M$ be a motif.  If $|L_M(v)| > \deg_M v$ for every $v\in V(M)$, then $M$ is oo-recolorable.
\end{corollary}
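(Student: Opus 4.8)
The final statement to prove is Corollary~\ref{cor-triv}: if $M$ is a motif with $|L_M(v)| > \deg_M v$ for every $v \in V(M)$, then $M$ is oo-recolorable.

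\medskip

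\textbf{Plan of proof.} The plan is to proceed by induction on $|V(M)|$, peeling off one vertex at a time using Lemma~\ref{lem:xycons}, and closing the induction with the single-vertex base case given by Observation~\ref{obs-start}. The key observation is that the hypothesis $|L_M(v)| > \deg_M v$ for \emph{every} vertex is strong enough to (a) guarantee that Lemma~\ref{lem:xycons} applies to any chosen vertex $v$, and (b) be preserved, for the right notion of ``list sizes'', when we pass to the smaller motif produced by that lemma.

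\medskip

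\textbf{Key steps.} First I would set up the induction. If $|V(M)| = 1$, then the unique vertex $v$ has $|L_M(v)| > \deg_M v = 0$, so $|L_M(v)| \ge 1$ and $M$ is oo-recolorable by Observation~\ref{obs-start}. For the inductive step, pick any vertex $v \in V(M)$, and suppose for contradiction that $M$ is not oo-recolorable. I need to check the hypothesis of Lemma~\ref{lem:xycons} for $v$: namely that $|L_M(v)| > \deg'_M v + |\{u \in N_M(v) : |L_M(u)| = 1\}|$. Since $\deg'_M v \le \deg_M v$ and, crucially, no neighbor $u$ of $v$ has $|L_M(u)| = 1$ (because $|L_M(u)| > \deg_M u \ge \deg_M u$; if $u$ is a neighbor of $v$ then $\deg_M u \ge 1$, so $|L_M(u)| \ge 2$), the set $\{u \in N_M(v) : |L_M(u)| = 1\}$ is empty, and the hypothesis reduces to $|L_M(v)| > \deg'_M v$, which follows from $|L_M(v)| > \deg_M v \ge \deg'_M v$. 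Hence Lemma~\ref{lem:xycons} yields an $(H_M - v)$-restriction $M'$ of $M$ with $L_{M'}$ an $(s_M - j_M(v))$-list assignment, and $M'$ not oo-recolorable.

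\medskip

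\textbf{Closing the induction.} Now I need to verify that $M'$ still satisfies the Corollary's hypothesis, so that the inductive hypothesis applies and gives a contradiction. Let $u \in V(M') = V(M) \setminus \{v\}$. We have $|L_{M'}(u)| \ge s_M(u) - j_M(v)(u) = |L_M(u)| - j_M(v)(u)$. Also $\deg_{M'} u = \deg_M u - [u \in N_M(v)]$. If $u \notin N_M(v)$, then $j_M(v)(u) = 0$ and $\deg_{M'} u = \deg_M u$, so $|L_{M'}(u)| \ge |L_M(u)| > \deg_M u = \deg_{M'} u$. If $u \in N_M(v)$, then (as noted above) $|L_M(u)| \ge 2$, so $j_M(v)(u) = 1$ and $\deg_{M'} u = \deg_M u - 1$, giving $|L_{M'}(u)| \ge |L_M(u)| - 1 > \deg_M u - 1 = \deg_{M'} u$. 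In both cases $|L_{M'}(u)| > \deg_{M'} u$, so $M'$ satisfies the hypothesis of the Corollary. By the inductive hypothesis $M'$ is oo-recolorable, contradicting the conclusion of Lemma~\ref{lem:xycons}. Therefore $M$ is oo-recolorable, completing the induction.

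\medskip

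\textbf{Main obstacle.} There is no serious obstacle here; the only thing requiring care is the bookkeeping around the definition of $j_M$ and the $(s_M - j_M(v))$-list assignment, specifically the case analysis on whether $u$ is a neighbor of $v$, and the small observation that the strict inequality $|L_M(u)| > \deg_M u \ge 1$ forces every neighbor of $v$ to have list size at least $2$ (which is what makes $j_M(v)(u) = 1$ on neighbors and what makes the set in Lemma~\ref{lem:xycons}'s hypothesis empty). Everything else is a routine induction already packaged by the preceding lemmas, and indeed the statement preceding the Corollary in the excerpt essentially announces this argument (``repeatedly applying Lemma~\ref{lem:xycons} until a motif with single vertex is obtained and using Observation~\ref{obs-start}'').
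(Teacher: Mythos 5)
Your proof is correct and is essentially the same argument the paper intends: the corollary is stated as a one-line consequence of repeatedly applying Lemma~\ref{lem:xycons} until a single vertex remains and then invoking Observation~\ref{obs-start}, and you have simply fleshed out the induction that this sketch compresses, including the two bookkeeping observations (that the hypothesis forces every neighbour's list to have size at least $2$, so the set in Lemma~\ref{lem:xycons}'s hypothesis is empty, and that the invariant $|L(\cdot)| > \deg(\cdot)$ is preserved under the reduction).
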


For a motif $M$, a vertex $v\in V(M)$, and a color $c\in L_M(v)$, let $M\ominus (v\to c)$ denote the $(H_M-v)$-restriction of $M$
with $L_{M\ominus(v\to c)}(u)$ equal to $L_M(u)\setminus (\{\alpha_M(v),c\})$ for $u\in N_M(v)$ and to $L_M(u)$ for all other vertices.
In case that $|L_M(v)|=1$, we write $M\ominus v$ for brevity, since the color $c$ is uniquely determined in this case.

\begin{lemma}\label{lem:res}
Let $M$ be a motif, let $v$ be a vertex of $M$, and consider any color $c\in L_M(v)$.
If the motif $M\ominus (v\to c)$ is oo-recolorable, then $M$ is oo-recolorable.  
\end{lemma}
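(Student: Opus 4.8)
The plan is to imitate the proofs of Lemmas~\ref{lem:xybas} and~\ref{lem:10}: let $\sigma$ be a once-only recoloring of $H_M-v$ witnessing the oo-recolorability of $M\ominus(v\to c)$, and let $\gamma$ be the $L_{M\ominus(v\to c)}$-coloring it reaches. I would first run $\sigma$ inside $M$ while leaving $v$ frozen at color $\alpha_M(v)$, and only then recolor $v$ to $c$ as the final step. The reason the lemma removes \emph{both} $\alpha_M(v)$ and $c$ from the lists of the neighbours of $v$ in $M\ominus(v\to c)$ is precisely to make each of these two halves legal: deleting $\alpha_M(v)$ ensures no neighbour of $v$ is ever recolored onto the frozen color of $v$, and deleting $c$ ensures that when we finally recolor $v$ to $c$ no neighbour of $v$ is currently using $c$.

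Concretely, I would argue as follows. Since $\sigma$ is a once-only recoloring starting from $\alpha_M$ (restricted to $H_M-v$) and ending at $\gamma$, every vertex $u$ of $H_M-v$ takes only colors in $\{\alpha_M(u),\gamma(u)\}$ throughout $\sigma$. For $u\in N_M(v)$ we have $\gamma(u)\in L_{M\ominus(v\to c)}(u)=L_M(u)\setminus\{\alpha_M(v),c\}$, so $\gamma(u)\notin\{\alpha_M(v),c\}$; moreover $\alpha_M(u)\ne\alpha_M(v)$ because $\alpha_M$ is proper on $H_M$. Hence performing the recolorings of $\sigma$ in $M$ with $v$ kept at $\alpha_M(v)$ never produces a conflict at $v$, and of course it is internally consistent on $H_M-v$, so this is a legal sequence of (proper) recolorings of $M$. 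Afterwards the coloring is $\gamma$ on $H_M-v$ and $\alpha_M(v)$ on $v$; since $\gamma(u)\ne c$ for every $u\in N_M(v)$ and $c\in L_M(v)$, we may legally recolor $v$ to $c$.

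It then remains to check this is a once-only recoloring of $M$ to an $L_M$-coloring: each vertex of $H_M-v$ is recolored at most once (by $\sigma$), $v$ is recolored at most once (to $c$, and not at all if $c=\alpha_M(v)$), the terminal coloring is proper on $H_M$, and it is an $L_M$-coloring because $L_{M\ominus(v\to c)}(u)\subseteq L_M(u)$ for all $u\in V(H_M-v)$ and $c\in L_M(v)$; thus $M$ is oo-recolorable. I do not anticipate any real obstacle here — the only care needed is the bookkeeping of which colors a neighbour of $v$ can take during $\sigma$ and the degenerate case $c=\alpha_M(v)$, both handled above. This lemma is simply the "delete two colors" analogue of Lemma~\ref{lem:xybas}, trading the hypothesis $c\notin\bigcup_{u\in N_M(v)}\{\alpha(u)\}$ for the extra deletion of $\alpha_M(v)$ from the neighbour lists, and its proof is a correspondingly minor variation.
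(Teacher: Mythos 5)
Your proof is correct and follows the same strategy as the paper: first run $\sigma$ in $M$ with $v$ frozen at $\alpha_M(v)$ (legal because the removal of $\alpha_M(v)$ from the neighbour lists plus properness of $\alpha_M$ rule out any conflict at $v$), then recolor $v$ to $c$ at the end (legal because the removal of $c$ from the neighbour lists forces $\gamma(u)\neq c$ for all $u\in N_M(v)$). The paper's proof is just a more terse version of the same argument.
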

\begin{proof}
By assumptions, $M\ominus (v\to c)$ is oo-recolorable via a sequence $\sigma$ of recolorings.
This sequence of recolorings can also be performed in $M$, since no neighbor of $v$ can be assigned the color $\alpha_M(v)$.
Finally, we can recolor $v$ to $c$, since no neighbor may end up with the color $c$.
This shows $M$ is oo-recolorable.
\end{proof}

We will generally repeatedly use the preceding claims to simplify the motif obtained by Lemma~\ref{lemma:redu}, often to
one contradicting Corollary~\ref{cor-triv}.  For brevity, let us introduce a notation for this kind of arguments.
Suppose vertices of a motif $M$ are labelled $v_i$ for $i \in I\subseteq \{1,\ldots, m\}$.
A vector $(s_1,\ldots, s_m)$ \emph{describes} $M$ if $s_i$ is an integer smaller or equal to $|L(v_i)|$ for $i\in I$ and $s_i=\bullet$ for $i\in \{1,\ldots,m\}\setminus I$.
Furthermore, a segment of this vector can be enclosed in square brackets; this indicates that there exists an index $i$ in this segment such that $\alpha_M(v_i)=10$.
By $M\sim (s_1,\ldots, \underline{s_i}, \ldots, s_m)\xrightarrow{\text{L$n$}}(s'_1,\ldots, s'_m)\sim M'$, we mean the following:
The motif $M$ is described by the vector $(s_1, \ldots, s_m)$, and applying Lemma~$n$ with $v=v_i$, we obtain a motif $M'$ described by $(s'_1,\ldots, s'_m)$,
such that if $M$ is not oo-colorable, then $M'$ also is not oo-colorable.
In case Lemma~\ref{lem:xycons} or Lemma~\ref{lem:res} with more than one color choice is applied, we also specify the color $c$ over the arrow.  In case the resulting motif $M'$ is not further discussed
(e.g., a contradiction with Corollary~\ref{cor-triv} is obtained), the $\sim M'$ part is omitted.  We can also chain several such statements in the natural way.
In all the arguments, we without loss of generality assume that $|L(v_i)|=s_i$, implicitly removing extra colors from the lists if needed.

Recall that by Lemma~\ref{prop:1}, the color $10$ appears in the closed neighbourhood of every vertex of a minimal counterexample. 

\begin{lemma}\label{lem:55}
Let $(G,\alpha)$ be a minimal counterexample and let $v_1$ and $v_2$ be adjacent vertices of $G$. If $\deg v_1 = \deg v_2 = 5$, then either $\alpha(v_1) = 10$ or $\alpha(v_2) = 10$. 
\end{lemma}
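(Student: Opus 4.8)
The plan is to argue by contradiction: assume $\alpha(v_1)\ne 10$ and $\alpha(v_2)\ne 10$, apply Lemma~\ref{lemma:redu} to the subgraph $H$ induced by $\{v_1,v_2\}$ to obtain a motif $M$ that is not oo-recolorable, and then show the list sizes in $M$ are large enough to contradict Corollary~\ref{cor-triv}. The whole point is a short counting argument: the parameter $s^H_{G,\alpha}$ picks up an extra $+1$ at each of $v_1,v_2$ because of Lemma~\ref{prop:1}.

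\textbf{Setup.} Since $v_1v_2\in E(G)$, the induced subgraph $H$ on $\{v_1,v_2\}$ is a single edge, so $\deg_H v_1=\deg_H v_2=1$. By Lemma~\ref{lemma:redu} there is a motif $M$ induced by $H$ in $(G,\alpha)$ that is not oo-recolorable; in particular $H_M=H$, $\alpha_M$ is the restriction of $\alpha$, and $L_M$ is an $s^H_{G,\alpha}$-list assignment, so $\deg_M v_i=1$ and $|L_M(v_i)|\ge s^H_{G,\alpha}(v_i)$ for $i=1,2$.

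\textbf{Key step (list sizes).} Compute $s^H_G(v_i)=9-2(\deg_G v_i-\deg_H v_i)=9-2(5-1)=1$. Now invoke Lemma~\ref{prop:1}: color $10$ appears in the closed neighbourhood of $v_1$. It is not on $v_1$ (as $\alpha(v_1)\ne 10$) and not on $v_2$ (as $\alpha(v_2)\ne 10$), so color $10$ appears on some neighbour of $v_1$ lying outside $V(H)$; hence $|(N_G(v_1)\cap\alpha^{-1}(10))\setminus V(H)|\ge 1$ and therefore $s^H_{G,\alpha}(v_1)\ge 2$. Symmetrically $s^H_{G,\alpha}(v_2)\ge 2$. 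Consequently $|L_M(v_1)|\ge 2$ and $|L_M(v_2)|\ge 2$.

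\textbf{Conclusion.} Since $|L_M(v_i)|\ge 2 > 1 = \deg_M v_i$ for $i=1,2$, Corollary~\ref{cor-triv} implies that $M$ is oo-recolorable, contradicting the choice of $M$. Hence $\alpha(v_1)=10$ or $\alpha(v_2)=10$. I do not expect a genuine obstacle here; the only things to be careful about are that the extra unit in the list bound really is forced by Lemma~\ref{prop:1} (the color-$10$ neighbour is necessarily outside $H$, using both hypotheses $\alpha(v_1),\alpha(v_2)\ne 10$), and the arithmetic $9-2(5-1)=1$.
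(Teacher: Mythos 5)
Your proof is correct and follows essentially the same route as the paper's: apply Lemma~\ref{lemma:redu} to $H=G[\{v_1,v_2\}]$, use Lemma~\ref{prop:1} to obtain $s^H_{G,\alpha}(v_i)\ge 2$, and contradict Corollary~\ref{cor-triv}. You merely spell out the arithmetic and the location of the color-$10$ neighbour more explicitly than the paper does.
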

\begin{proof}
By Lemma~\ref{lemma:redu}, there exist a motif $M$ induced by $H=G[\{v_1,v_2\}]$ in $(G,\alpha)$ that is not oo-recolorable.
If neither $u$ nor $v$ has color $10$, then since the color $10$ appears in the closed neighbourhood of every vertex, we have $s^H_{G,\alpha}(u)\ge 2$
and $s^H_{G,\alpha}(v)\ge 2$.  However, this contradicts Corollary~\ref{cor-triv}.
\end{proof}

We also need the following three easy observations. 

\begin{lemma}\label{lem:edge}
Let $M$ be a motif such that $H_M$ is an edge with vertices $v_1$ and $v_2$. If $M$ is described by $(2, 1)$, then $M$ is oo-recolorable unless $\alpha_M^{-1}(10)\cap V(M)=\emptyset$,
$L_M(v_1) = \{\alpha_M(v_1), \alpha_M(v_2)\}$ and $L_M(v_2) = \{\alpha_M(v_1)\}$.
\end{lemma}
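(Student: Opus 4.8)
\textbf{Proof proposal for Lemma~\ref{lem:edge}.}
The plan is to do a straightforward case analysis on the colors $\alpha_M(v_1)$, $\alpha_M(v_2)$ and the lists $L_M(v_1)$, $L_M(v_2)$, showing that in every situation other than the stated exceptional one we can produce a once-only recoloring. Throughout we use that $10\notin L_M(v_i)$ by the definition of a motif, that $|L_M(v_1)|=2$, $|L_M(v_2)|=1$, and that a once-only recoloring just has to move $\alpha_M$ to some proper $L_M$-coloring $\gamma$ with each vertex recolored at most once (so doing nothing, if $\alpha_M$ already restricts to an $L_M$-coloring, also counts — but note $\alpha_M$ need not be an $L_M$-coloring here).

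First I would dispose of the case $\alpha_M^{-1}(10)\cap V(M)\neq\emptyset$: if $\alpha_M(v_1)=10$, apply Lemma~\ref{lem:10} with $v=v_1$, since $|L_M(v_1)|=2>1=\deg_M v_1$ and $M-v_1$ is a single vertex $v_2$ with $|L_M(v_2)|\ge 1$, hence oo-recolorable by Observation~\ref{obs-start}; symmetrically if $\alpha_M(v_2)=10$, use Lemma~\ref{lem:10} with $v=v_2$ (here $|L_M(v_2)|=1>0=\deg'_M v_2$ once $v_1$'s color is $10$ — more precisely, recolor $v_1$ first to its unique available color $\ne\alpha_M(v_1)$ if needed, then $v_2$; I would just invoke Corollary~\ref{cor-triv} after checking the list-size inequalities, or argue directly). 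So from now on assume $10\notin\{\alpha_M(v_1),\alpha_M(v_2)\}$, in particular $\alpha_M$ restricted to $V(M)$ is a proper coloring (with colors in $\{1,\dots,9\}$ but possibly not in the lists).

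Now write $L_M(v_2)=\{a\}$. If $a\ne\alpha_M(v_1)$, then $v_2$ can be recolored to $a$ immediately (it only conflicts with $\alpha_M(v_1)$), and afterwards $v_1$ has at most one forbidden color in its list of size $2$, so $v_1$ can be recolored into $L_M(v_1)$ if its current color lies outside the list; formally this is Lemma~\ref{lem:xybas}/Corollary~\ref{cor-triv} applied to $M-(v_2\to a)$, whose remaining vertex $v_1$ has list of size $\ge 1$. Hence we may assume $a=\alpha_M(v_1)$, i.e. $L_M(v_2)=\{\alpha_M(v_1)\}$; in particular $v_2$ cannot be recolored before $v_1$ is moved off the color $\alpha_M(v_1)$. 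So we must recolor $v_1$ first, to some $c\in L_M(v_1)$ with $c\ne\alpha_M(v_2)$ (a once-only step), and then $v_2$ to $\alpha_M(v_1)$, which is legal iff $c\ne\alpha_M(v_1)$. Thus $M$ is oo-recolorable unless every $c\in L_M(v_1)$ fails $c\notin\{\alpha_M(v_1),\alpha_M(v_2)\}$, i.e. unless $L_M(v_1)\subseteq\{\alpha_M(v_1),\alpha_M(v_2)\}$; since $|L_M(v_1)|=2$ this forces $L_M(v_1)=\{\alpha_M(v_1),\alpha_M(v_2)\}$ (note $\alpha_M(v_1)\ne\alpha_M(v_2)$ as $v_1v_2\in E$). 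This is exactly the exceptional case, and conversely in that case any once-only recoloring must leave $v_1$ with a color in $\{\alpha_M(v_1),\alpha_M(v_2)\}$, neither of which frees $v_2$, so $M$ is genuinely not oo-recolorable. The main (and only mildly annoying) obstacle is bookkeeping the direction in which the two recolorings must be performed — the asymmetry forced by $L_M(v_2)$ being a singleton — and making sure the "unless" clause captures precisely the obstruction; there is no real difficulty beyond that.
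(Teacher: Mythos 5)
Your proof is correct, and the core of it — after reducing to the case $10\notin\alpha_M(V(M))$, splitting on whether $L_M(v_2)=\{\alpha_M(v_1)\}$ and then checking whether $L_M(v_1)\subseteq\{\alpha_M(v_1),\alpha_M(v_2)\}$ — is the same case analysis as the paper's. The paper, however, does this more economically: it simply applies Lemma~\ref{lem:xybas} twice (first to $v_2$, then to $v_1$) without ever separating out the color-$10$ cases, because those fall out automatically from the fact that $L_M$-colors lie in $\{1,\dots,9\}$: once one concludes $L_M(v_2)=\{\alpha_M(v_1)\}$ and $L_M(v_1)=\{\alpha_M(v_1),\alpha_M(v_2)\}$, it is immediate that neither $\alpha_M(v_1)$ nor $\alpha_M(v_2)$ equals $10$. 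Your preliminary disposal of the $\alpha_M(v_i)=10$ cases is therefore superfluous, and in fact it is also the one place your write-up is muddled: for $\alpha_M(v_2)=10$, Lemma~\ref{lem:10} does not apply (it needs $|L_M(v_2)|>\deg_M v_2$, but here $1=1$), and Corollary~\ref{cor-triv} does not apply for the same reason. You notice this and fall back on a direct argument, which is fine in substance (recolor $v_1$ to a color in $L_M(v_1)$ distinct from the unique color in $L_M(v_2)$, then recolor $v_2$), but the phrase ``unique available color $\ne\alpha_M(v_1)$'' is wrong, since $v_1$ has two available colors. Finally, the converse direction you prove (that the exceptional configuration is genuinely not oo-recolorable) is correct but not part of the statement, which only asserts the ``unless'' implication.
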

\begin{proof}
Suppose that $M$ is not oo-recolorable. If there exists a color $c_2 \in L_M(v_2) \setminus \{\alpha_M(v_1)\}$, then $M \sim (2, \underline{1}) \rby[c_2]{xybas} (1, \bullet)$,
contradicting Observation~\ref{obs-start}. It follows that $L_M(v_2) = \{\alpha_M(v_1)\}$. Hence, if there exists a color $c_1 \in L_M(v_1) \setminus \{\alpha_M(v_1), \alpha_M(v_2)\}$, then
$M \sim (\underline{2}, 1) \rby[c_1]{xybas} (\bullet,1)$, contradicting Observation~\ref{obs-start}.
Therefore, we have $L_M(v_1) = \{\alpha_M(v_1), \alpha_M(v_2)\}$, and in particular $\alpha_M^{-1}(10)\cap V(M)=\emptyset$.
\end{proof}

\begin{lemma}\label{lem:path}
Let $M$ be a motif such that $H_M$ is a path $v_1 v_2 v_3$. If $M$ is described by $(2, 2, 2)$ and  $\alpha^{-1}(10) \cap V(H_M) \neq \emptyset$, then $M$ is oo-recolorable. 
\end{lemma}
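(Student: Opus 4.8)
\textbf{Proof plan for Lemma~\ref{lem:path}.}
The plan is to proceed by a short case analysis on where the color $10$ sits on the path $v_1 v_2 v_3$, using the auxiliary reduction lemmas (Lemmas~\ref{lem:2d}, \ref{lem:10}, \ref{lem:xybas}, \ref{lem:xycons}, \ref{lem:res}) to strip off vertices one at a time until a single-vertex motif with a nonempty list remains, whence Observation~\ref{obs-start} applies. Throughout I assume $|L_M(v_i)|=2$ (discarding extra colors), so each list has exactly two colors, and I write $a_i=\alpha_M(v_i)$. The hypothesis $\alpha^{-1}(10)\cap V(H_M)\neq\emptyset$ means at least one of $a_1,a_2,a_3$ equals $10$; note also that since $a_i\neq 10$ is impossible to have on \emph{all} three, but the interesting split is whether the endpoint $v_1$ or $v_3$, or the interior vertex $v_2$, carries color $10$ (up to the symmetry $v_1\leftrightarrow v_3$ there are really two cases).

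\emph{Case 1: $a_2=10$.} Here $v_2$ has a neighbor colored $10$ in no restrictive way — rather, I would first handle $v_1$ and $v_3$, which each have only the single neighbor $v_2$ in $M$. Since $\deg_M v_1=1$ and $\deg'_M v_1 = 0$ (as $a_2=10$), Lemma~\ref{lem:2d} removes $v_1$ provided $|L_M(v_1)|>\deg_M v_1+\deg'_M v_1=1$, which holds; similarly it removes $v_3$. We are left with the single vertex $v_2$ with $|L_M(v_2)|=2\ge 1$, so Observation~\ref{obs-start} finishes this case. (Concretely: $M\sim(\underline{2},\bullet,2)\xrightarrow{\text{L\ref{lem:2d}}}(\bullet,\bullet,2)\xrightarrow{\text{L\ref{lem:2d}}}$ contradicting Observation~\ref{obs-start} if $M$ were not oo-recolorable — here the surviving motif is on $v_3$ alone, or one removes $v_3$ first, symmetrically.)

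\emph{Case 2: $a_1=10$ (the case $a_3=10$ is symmetric).} Now $v_1$ has color $10$ and $\deg_M v_1=1$, so Lemma~\ref{lem:10} removes $v_1$ since $|L_M(v_1)|=2>1=\deg_M v_1$. This yields the $(H_M-v_1)$-restriction $M'$, a motif on the edge $v_2 v_3$ described by $(2,2)$ — note each list still has size $\ge 2$ because removing $v_1$ does not shrink any list. But then $|L_{M'}(v_i)|=2>1=\deg_{M'}v_i$ for both vertices of $M'$, so Corollary~\ref{cor-triv} gives that $M'$ is oo-recolorable, hence so is $M$. Symbolically: $M\sim([\underline{2}],2,2)\xrightarrow{\text{L\ref{lem:10}}}(\bullet,2,2)$, which is oo-recolorable by Corollary~\ref{cor-triv}.

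The two cases exhaust the hypothesis, so $M$ is oo-recolorable in all cases. I do not expect a genuine obstacle here — the only point requiring a little care is keeping track of which reduction lemma applies depending on whether the removed vertex has color $10$ (use Lemma~\ref{lem:10}) or not (use Lemma~\ref{lem:2d}), and verifying the strict-inequality conditions on list sizes, all of which are immediate from $|L_M(v_i)|=2$ and the degrees in a path being at most $2$. One should double-check the degenerate possibility that two of the $a_i$ coincide or that $10$ appears more than once, but these only make the list-size inequalities easier, so no separate treatment is needed.
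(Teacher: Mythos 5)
Your proof is correct and takes essentially the same approach as the paper's: both split on whether the color $10$ sits on an endpoint (apply Lemma~\ref{lem:10} to remove it, reducing to Corollary~\ref{cor-triv}) or on the middle vertex (apply Lemma~\ref{lem:2d} to strip off both endpoints, each of which has $\deg'_M=0$, reducing to Observation~\ref{obs-start}), just with the two cases treated in the opposite order. One small slip: the parenthetical ``Concretely'' display in your Case~1 has a misplaced $\bullet$ --- the intended chain is $(\underline{2},[2],2)\rightarrow(\bullet,[2],\underline{2})\rightarrow(\bullet,[2],\bullet)$, leaving $v_2$, not $v_3$ --- but the surrounding prose argument is clear and correct.
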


\begin{proof}
Suppose for a contradiction $M$ is not oo-recolorable and that $\alpha^{-1}(10) \cap V(H_M) \neq \emptyset$. If $\alpha(v_1) = 10$, then $M \sim ([\underline{2}], 2, 2) \rby{10} (\bullet, 2, 2)$, contradicting Corollary~\ref{cor-triv}.
It follows by symmetry that $\alpha(v_2) = 10$; but then $M \sim (\underline{2}, [2], \underline{2}) \rby{2d} (\bullet, [2], \bullet)$, contradicting Observation~\ref{obs-start}. 
\end{proof}

\begin{lemma}\label{lem:path1}
Let $M$ be a motif such that $H_M$ is a path $v_1v_2v_3$. If $M$ is described by $(1, 4, 1)$, then $M$ is oo-recolorable. 
\end{lemma}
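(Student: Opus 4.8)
The plan is to run a short two-case analysis on the forced target colours of the endpoints, reducing to the lemmas above in the easy case and doing one configuration by hand. Normalise (discarding superfluous colours, as usual) so that $L_M(v_1)=\{c_1\}$, $L_M(v_3)=\{c_3\}$ and $|L_M(v_2)|=4$. Since $v_1$ and $v_3$ are leaves of the path, any once-only recoloring must end with $\gamma(v_1)=c_1$ and $\gamma(v_3)=c_3$, so the only freedom is the choice of $\gamma(v_2)\in L_M(v_2)\setminus\{c_1,c_3\}$ (a set of size at least $2$) together with the order in which the three vertices are recoloured.

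First I would treat the case in which some endpoint can be recoloured immediately, say $c_1\neq\alpha_M(v_2)$ (the case $c_3\neq\alpha_M(v_2)$ is symmetric). Here Lemma~\ref{lem:xybas} applied to $v_1$ with colour $c_1$ deletes $v_1$ and leaves the edge $v_2v_3$ with $|L(v_2)|\ge 3$ and $|L(v_3)|=1$; then Lemma~\ref{lem:res} applied to $v_3$ deletes $v_3$ and leaves the single vertex $v_2$ with $|L(v_2)|\ge 1$, which is oo-recolorable by Observation~\ref{obs-start}. In the paper's shorthand this is just
$$M\sim(\underline{1},4,1)\rby[c_1]{xybas}(\bullet,3,\underline{1})\rby{res}(\bullet,1,\bullet).$$

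The remaining case, $c_1=c_3=\alpha_M(v_2)$, is the one I expect to be the (minor) obstacle: now neither leaf can move before $v_2$ does, so no one-vertex reduction lemma applies directly, and this is exactly where the hypothesis $|L_M(v_2)|=4$ is needed. The key point is that when $v_2$ is recoloured first, the new colour $\gamma(v_2)$ only has to avoid the three colours $\alpha_M(v_1),\alpha_M(v_2),\alpha_M(v_3)$ (the current colours of its neighbours, together with $\alpha_M(v_2)=c_1=c_3$, which is the only constraint coming from the final colouring). So one picks $d\in L_M(v_2)\setminus\{\alpha_M(v_1),\alpha_M(v_2),\alpha_M(v_3)\}$ (nonempty since $|L_M(v_2)|=4$) and recolours $v_2\to d$, then $v_1\to c_1$, then $v_3\to c_3$; the only remaining work is the routine verification that every intermediate colouring, and the final one, is proper. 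All told the argument runs to a few lines, the sole care needed being the bookkeeping of forbidden colours at each recoloring step.
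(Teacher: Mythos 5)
Your proof is correct and follows the same two-case decomposition as the paper: first handle the case where one endpoint's target color differs from $\alpha_M(v_2)$, then the residual case $L_M(v_1)=L_M(v_3)=\{\alpha_M(v_2)\}$ where $v_2$ must move first. The only cosmetic deviations are that in the first case you finish with Lemma~\ref{lem:res} where the paper uses Lemma~\ref{lem:2d} (both work immediately at that point), and in the second case you write out the three-step recoloring by hand rather than invoking Lemma~\ref{lem:xybas} at $v_2$ and appealing to Corollary~\ref{cor-triv} — but the explicit sequence you give is exactly what that lemma application unwinds to.
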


\begin{proof}
Suppose $M$ is not oo-recolorable. If there exists a color $c_1 \in L_M(v_1) \setminus \{\alpha_M(v_2)\}$, then $M \sim (\underline{1}, 4, 1) \rby[c_1]{xybas} (\bullet, \underline{3}, 1) \rby{2d} (\bullet, \bullet, 1)$, contradicting Observation \ref{obs-start}. So we can assume by symmetry that $L_M(v_1) = L_M(v_3) = \{\alpha_M(v_2)\}$.
But then for $c_2 \in L_M(v_2) \setminus \{\alpha_M(v_1), \alpha_M(v_2), \alpha_M(v_3)\}$, we have
$M \sim (1, \underline{4}, 1) \rby[c_2]{xybas} (1,\bullet, 1)$, contradicting Corollary~\ref{cor-triv}.
\end{proof}

We now make two observations about triangles in a minimal counterexample.

\begin{lemma}\label{lem:triangle}
Let $(G,\alpha)$ be a minimal counterexample. If $G$ contains a triangle $T$ with vertices $v_1$, $v_2$, and $v_3$ such that $v_1$ has degree five and
$v_2$ and $v_3$ have degree at most six, then $\alpha^{-1}(10)\cap V(T)\neq\emptyset$.
\end{lemma}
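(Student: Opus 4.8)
The goal is to show that a triangle $T=v_1v_2v_3$ in a minimal counterexample, with $\deg v_1=5$ and $\deg v_2,\deg v_3\le 6$, must contain a vertex colored $10$. I would argue by contradiction: assume $\alpha^{-1}(10)\cap V(T)=\emptyset$. Apply Lemma~\ref{lemma:redu} to the induced subgraph $H=G[\{v_1,v_2,v_3\}]=T$ to obtain a motif $M$ induced by $H$ that is not oo-recolorable. By Remark~\ref{rem:size}, the list of each $v_i$ has size at least $s^H_{G,\alpha}(v_i)=9-2(\deg_G v_i-\deg_H v_i)+|(N_G(v_i)\cap\alpha^{-1}(10))\setminus V(H)|$. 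Since $\deg_H v_i=2$ for every vertex of the triangle, this gives $|L_M(v_1)|\ge 9-2(5-2)=3$ (plus the $10$-neighbor bonus) and $|L_M(v_i)|\ge 9-2(6-2)=1$ for $i=2,3$. The crucial extra gain comes from Lemma~\ref{prop:1}: color $10$ lies in the closed neighborhood of every vertex, and since no vertex of $T$ is colored $10$, each $v_i$ has a neighbor colored $10$; if $\deg v_i=6$ that neighbor could be inside the triangle, but by assumption it is not, so the neighbor colored $10$ lies outside $H$, contributing $+1$ to the size bound. Hence in fact $|L_M(v_1)|\ge 4$ (degree-five vertex: $9-6+1$) and $|L_M(v_2)|,|L_M(v_3)|\ge 2$ (degree-six vertices: $9-8+1$); if a $v_i$ has degree five the bound is even larger.

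\textbf{Main reduction.} With the vector $(4,2,2)$ (or better) describing $M$ on the triangle, I would peel off $v_2$ and $v_3$ using Lemma~\ref{lem:xycons} or Lemma~\ref{lem:res}. Concretely: $v_2$ has $\deg'_M v_2\le 2$ but we only need to avoid, when removing $v_2\to c$, the colors $\alpha_M$ of its neighbors and the singleton lists of its neighbors; with $|L_M(v_2)|\ge 2$ and its two neighbors $v_1,v_3$ having lists of size $\ge 4,\ge 2$ respectively (no singletons), Lemma~\ref{lem:xycons} lets us remove $v_2$ while losing at most one color from each of $v_1,v_3$, leaving an edge $v_1v_3$ described by $(3,1)$ at worst. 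Then removing $v_3$ similarly — its remaining list has size $\ge 1$, and we apply Lemma~\ref{lem:xybas}/Lemma~\ref{lem:res} to kill $v_3$, losing at most one more color from $v_1$ — leaves a single vertex $v_1$ with list of size $\ge 2>0$, contradicting Observation~\ref{obs-start}. One must be a little careful to sequence the removals so that the degree-five vertex $v_1$ (with the largest list) is removed last, absorbing the color losses.

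\textbf{Where it gets delicate.} The arithmetic needs to be checked case by case depending on whether $v_2$ and $v_3$ actually have degree five or six, and depending on whether the color $10$ forces additional external neighbors: a degree-five $v_i$ in the triangle already has $|L_M(v_i)|\ge 3$, which only helps. The one genuinely constraining scenario is $\deg v_2=\deg v_3=6$ with $|L_M(v_2)|=|L_M(v_3)|=2$ and $|L_M(v_1)|=4$; here I expect the chain $M\sim(\underline{4},2,2)$ won't work by simply deleting $v_1$ first (its list is not big enough relative to $\deg_M v_1+\deg'_M v_1$ unless its neighbors have color $10$, which they don't), so the order of deletion — remove $v_2$, then $v_3$, then use Observation~\ref{obs-start} on $v_1$ — is essential. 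The main obstacle is thus not conceptual but bookkeeping: verifying that after two applications of Lemma~\ref{lem:xycons}/Lemma~\ref{lem:res} the surviving list of $v_1$ still has positive size, i.e.\ that $4-1-1=2>0$, and that the hypotheses of the peeling lemmas (e.g.\ $|L_M(v_2)|>\deg'_M v_2+|\{u\in N_M(v_2):|L_M(u)|=1\}|$) are actually met at each step — which they are, since at the moment we delete $v_2$ its neighbors $v_1,v_3$ have lists of size $\ge 4$ and $\ge 2$, hence no singleton neighbors, so we need $2>\deg'_M v_2$, and $\deg'_M v_2\le 1$ because... wait, $v_2$'s neighbors in $M$ are $v_1$ and $v_3$, neither colored $10$, so $\deg'_M v_2=2$, and $2>2$ fails — so here one instead uses Lemma~\ref{lem:res} with a specific color or Lemma~\ref{lem:2d} is unavailable; the right tool is Lemma~\ref{lem:xybas} after observing we may choose $c\in L_M(v_2)\setminus\{\alpha_M(v_1)\}$ (size $\ge 1$), giving the edge $(3,1)$ then finishing as above. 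I would present these as a short explicit chain $M\sim(\underline{4},2,2)\xrightarrow{\text{L\ref{lem:res}}}(3,\bullet,1)\xrightarrow{\text{L\ref{lem:res}}}(2,\bullet,\bullet)$, contradicting Observation~\ref{obs-start}.
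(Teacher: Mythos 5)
Your setup is correct and matches the paper: apply Lemma~\ref{lemma:redu} to get a motif $M$ induced by $T$ that is not oo-recolorable, and use Lemma~\ref{prop:1} (since no vertex of $T$ is colored $10$, each $v_i$ must have a color-$10$ neighbor outside $T$) to conclude $s^T_{G,\alpha}(v_1)\ge 4$ and $s^T_{G,\alpha}(v_2),s^T_{G,\alpha}(v_3)\ge 2$, so $M$ is described by $(4,2,2)$. The paper also starts by trying to peel off $v_2$.

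However, your reduction has a genuine gap at the point where you write ``the right tool is Lemma~\ref{lem:xybas} after observing we may choose $c\in L_M(v_2)\setminus\{\alpha_M(v_1)\}$.'' Lemma~\ref{lem:xybas} requires $c\in L_M(v_2)\setminus\bigcup_{u\in N_M(v_2)}\{\alpha(u)\}$, and $v_2$ has \emph{two} neighbors in the triangle, namely $v_1$ \emph{and} $v_3$; you must avoid $\alpha(v_3)$ as well. If $L_M(v_2)=\{\alpha(v_1),\alpha(v_3)\}$, no valid choice of $c$ exists, and the reduction fails. Your fallback chain $M\sim(\underline{4},2,2)\xrightarrow{\text{L\ref{lem:res}}}(3,\bullet,1)$ is also off: Lemma~\ref{lem:res} removes \emph{two} colors, $\alpha_M(v)$ and $c$, from each neighbor's list, so deleting $v_2$ by $\ominus$ yields $(2,\bullet,0)$ in the worst case (and the underline in your notation should be on $v_2$, not $v_1$, if that is what you intend). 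Thus the chain does not establish what you claim, precisely in the hard case where $v_3$'s list is small.

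What is missing is the second half of the argument: if no color in $L_M(v_2)\setminus\{\alpha(v_1),\alpha(v_3)\}$ exists, then $L_M(v_2)=\{\alpha(v_1),\alpha(v_3)\}$, and by the symmetric argument $L_M(v_3)=\{\alpha(v_1),\alpha(v_2)\}$. At that point one should peel off $v_1$ \emph{first} rather than last, choosing a color $c'\in L_M(v_1)\setminus\{\alpha(v_1),\alpha(v_2),\alpha(v_3)\}$ (this exists since $|L_M(v_1)|\ge 4$). Such a $c'$ avoids the current colors of $v_2,v_3$, so Lemma~\ref{lem:xybas} applies, and moreover $c'\notin L_M(v_2)\cup L_M(v_3)$, so the lists of $v_2,v_3$ are not reduced at all, giving $(\bullet,2,2)$ and a contradiction with Corollary~\ref{cor-triv}. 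Your plan to ``remove $v_1$ last, absorbing the color losses'' is exactly the opposite of what this case requires; the degree-five vertex must absorb no losses and be handled first because its large list supplies a fresh color that the two small lists cannot contain.
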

\begin{proof}
By Lemma~\ref{lemma:redu}, there exists a motif $M$ induced by $T$ in $(G,\alpha)$ that is not oo-recolorable.
Suppose for a contradiction no vertex of $T$ has color $10$. Since the color $10$ appears in the closed neighbourhood of every vertex, we have $s^T_{G,\alpha}(v_1)\ge 4$
and $s^T_{G,\alpha}(v_2),s^T_{G,\alpha}(v_3)\ge 2$.  If there existed a color $c\in L_M(v_2)\setminus \{\alpha(v_1),\alpha(v_3)\}$,
we would have $M\sim (4,\underline{2},2)\rby[c]{xybas}(\underline{3},\bullet,1)\rby{2d}(\bullet,\bullet,1)$, contradicting Observation~\ref{obs-start}.
Therefore, $L_M(v_2)=\{\alpha(v_1),\alpha(v_3)\}$, and by symmetry, $L_M(v_3)=\{\alpha(v_1),\alpha(v_2)\}$.  Then, letting $c'$ be a color in $L_M(v_1)\setminus\{\alpha(v_1),\alpha(v_2),\alpha(v_3)\}$,
we have $M\sim (\underline{4},2,2)\rby[c']{xybas}(\bullet,2,2)$, contradicting Corollary~\ref{cor-triv}.
\end{proof}

\begin{lemma}\label{lem:triangle1}
Let $M$ be a motif such that $H_M$ is a triangle with vertices $v_1$, $v_2$, and $v_3$.
If $M$ is described by $(4, 3, 1)$, then $M$ is oo-recolorable, and if
$M$ is described by $(3,3,1)$ or $(3,3,2)$, then $M$ is oo-recolorable unless 
$\alpha_M^{-1}(10)=\emptyset$ and $L_M(v_1) = L_M(v_2) =  \{\alpha_M(v_1), \alpha_M(v_2), \alpha_M(v_3)\}$ and $L_M(v_3) \subseteq \{\alpha_M(v_1), \alpha_M(v_2)\}$.
\end{lemma}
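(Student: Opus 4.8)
The plan is to mimic the structure of Lemmas~\ref{lem:edge} through~\ref{lem:triangle1}: assume $M$ is not oo-recolorable and peel off vertices one at a time using the reduction lemmas, with the aim of reaching a contradiction with Observation~\ref{obs-start} or Corollary~\ref{cor-triv}, except in the exceptional list configuration, which we must show is genuinely forced. I will treat the three cases $(4,3,1)$, $(3,3,1)$, $(3,3,2)$ in sequence, since the last two share an exceptional outcome and the first is strictly easier.

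For the $(4,3,1)$ case: I would first try to delete $v_3$ via Lemma~\ref{lem:res}. Since $\alpha_M(v_3)$ and the single color of $v_3$ remove at most two colors from each of $L_M(v_1)$ and $L_M(v_2)$, we get a motif described by $(\geq 2, \geq 1)$ on the edge $v_1v_2$. If this is oo-recolorable we are done; otherwise Lemma~\ref{lem:edge} pins down the lists exactly, and I would then check that this forced configuration is in fact oo-recolorable by hand (recolor $v_3$ first to a color avoiding $\alpha(v_1),\alpha(v_2)$ — possible since $|L(v_3)\cup\{\alpha(v_1),\alpha(v_2)\}|$ leaves room once we observe the interplay with the forced lists on the edge), contradicting non-recolorability. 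Alternatively, and perhaps more cleanly, apply Lemma~\ref{lem:xycons} at $v_1$: we have $|L_M(v_1)|=4>\deg'_M v_1 + |\{u\in N_M(v_1):|L_M(u)|=1\}|$ (the right side is at most $2+1=3$), so we may delete $v_1$ leaving $(\bullet, \geq 3, \geq 1)$ — wait, $j_M(v_2)=1$ so $v_2$ keeps $\geq 3$, $v_3$ keeps $1$ — giving an edge described by $(3,1)$, then $M\sim(\bullet,\underline{3},1)\rby{2d}(\bullet,\bullet,1)$ contradicting Observation~\ref{obs-start}.

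For the $(3,3,1)$ and $(3,3,2)$ cases, the exceptional configuration must actually arise, so the argument is more delicate. Suppose first $\alpha_M^{-1}(10)\cap V(M)\neq\emptyset$; I would argue using Lemmas~\ref{lem:10} and~\ref{lem:2d} that $M$ is oo-recolorable: if the color-$10$ vertex has a large enough list we delete it via Lemma~\ref{lem:10} and reduce to Lemma~\ref{lem:edge}/\ref{lem:triangle1}-smaller, and the arithmetic ($|L|\geq 3 > \deg_M$ fails since $\deg_M v_i = 2$, so actually $3>2$ holds) works out. So assume $\alpha_M^{-1}(10)=\emptyset$, whence every $s^H_{G,\alpha}$-type bound is unaffected by color $10$. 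Now try to delete $v_3$ via Lemma~\ref{lem:res}: removing $\alpha_M(v_3)$ and $v_3$'s chosen color from $L_M(v_1),L_M(v_2)$ leaves an edge described by $(\geq 1, \geq 1)$ — and if we can arrange $(2,1)$ we invoke Lemma~\ref{lem:edge}, whose exceptional outcome we then pull back through Lemma~\ref{lem:res} to identify exactly which lists on $v_1,v_2$ survive for every choice of $c\in L_M(v_3)$; combining these constraints over all choices of $c$ forces $L_M(v_1)=L_M(v_2)=\{\alpha_M(v_1),\alpha_M(v_2),\alpha_M(v_3)\}$ and $L_M(v_3)\subseteq\{\alpha_M(v_1),\alpha_M(v_2)\}$. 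In the $(3,3,2)$ subcase there is an extra color in $L_M(v_3)$ to play with, and I expect to handle it by first applying Lemma~\ref{lem:xycons} at $v_3$ (check $|L_M(v_3)|=2>\deg'_M v_3+|\{u\in N_M(v_3):|L_M(u)|=1\}| = 2+0$ — this fails with equality, so instead one picks $c\in L_M(v_3)\setminus\{\alpha_M(v_1),\alpha_M(v_2)\}$ if it exists, reducing to $(2,2,\bullet)$... ) — so the honest move is: if $L_M(v_3)\not\subseteq\{\alpha_M(v_1),\alpha_M(v_2)\}$ pick such a $c$, apply Lemma~\ref{lem:xybas}, and land in $(\geq 2,\geq 2,\bullet)$, an edge, oo-recolorable by Corollary~\ref{cor-triv}; hence $L_M(v_3)\subseteq\{\alpha_M(v_1),\alpha_M(v_2)\}$, and then the same careful case analysis on $v_1,v_2$ (now using Lemma~\ref{lem:xycons}/\ref{lem:xybas} with the single effective color choice) forces the claimed lists.

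The main obstacle I anticipate is the bookkeeping in the $(3,3,1)$ and $(3,3,2)$ cases: because the exceptional configuration is not excluded, one cannot simply reduce and cite Corollary~\ref{cor-triv} — instead every reduction step that could be applied must be shown to either succeed (yielding oo-recolorability and a contradiction) or to leave only the one surviving list pattern, and one has to verify that this pattern is consistent across all admissible color choices in the reduction (in particular the two choices available when $|L_M(v_3)|=2$). Keeping track of which of $\alpha_M(v_1),\alpha_M(v_2),\alpha_M(v_3)$ lies in which list, and ruling out the degenerate possibilities where two of these colors coincide or a list is smaller than stated, is the fiddly part; the arithmetic itself (comparisons of list sizes against $\deg_M v_i=2$ and $\deg'_M v_i\leq 2$) is routine.
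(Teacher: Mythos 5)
Your proposal is in the right spirit (peel off vertices with the reduction lemmas), and your first steps for the $(3,3,1)/(3,3,2)$ case — using Lemma~\ref{lem:xybas} at $v_3$ to force $L_M(v_3)\subseteq\{\alpha_M(v_1),\alpha_M(v_2)\}$ — match the paper exactly. However, there are three genuine gaps.

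First, the $(4,3,1)$ reduction via Lemma~\ref{lem:xycons} at $v_1$ does not land on an edge described by $(3,1)$: since $|L_M(v_2)|\geq 2$ we have $j_M(v_2)=1$, so the surviving guarantee on $v_2$ is $s_M(v_2)-j_M(v_2)=3-1=2$, i.e., a $(2,1)$ edge. Your chain $(\bullet,\underline{3},1)\rby{2d}(\bullet,\bullet,1)$ therefore doesn't apply (Lemma~\ref{lem:2d} at $v_2$ would need $|L(v_2)|>\deg v_2+\deg' v_2$, which can be $>2$, i.e., $\geq 3$). You would instead have to invoke Lemma~\ref{lem:edge} on the $(2,1)$ edge and then rule out its exceptional outcome by a counting argument over the possible choices of $c_1$ — possible but not what you wrote. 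The paper's treatment of $(4,3,1)$ is much shorter: delete a color from $L_M(v_1)$ to obtain a $(3,3,1)$ motif in which $L(v_1)\neq L(v_2)$, then cite the $(3,3,1)$ characterization.

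Second, your split on whether $\alpha_M^{-1}(10)=\emptyset$ is inverted relative to the paper's proof, and your handling of the nonempty case breaks when the color-$10$ vertex is $v_3$: then $|L_M(v_3)|\leq 2=\deg_M v_3$, so Lemma~\ref{lem:10} is not applicable at $v_3$, and Lemma~\ref{lem:2d} at $v_1$ would need $|L_M(v_1)|>\deg_M v_1+\deg'_M v_1=2+1=3$, also failing. The paper never makes this split; instead, $\alpha_M^{-1}(10)=\emptyset$ falls out for free once it is shown that $L_M(v_1)=\{\alpha_M(v_1),\alpha_M(v_2),\alpha_M(v_3)\}$, because lists are subsets of $\{1,\ldots,9\}$.

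Third, and most importantly, after establishing $L_M(v_3)\subseteq\{\alpha_M(v_1),\alpha_M(v_2)\}$, the key step that forces $L_M(v_1)=\{\alpha_M(v_1),\alpha_M(v_2),\alpha_M(v_3)\}$ is a direct three-move recoloring, not a lemma chain: pick $c_1\in L_M(v_1)\setminus\{\alpha_M(v_1),\alpha_M(v_2),\alpha_M(v_3)\}$ if it exists, recolor $v_1\to c_1$, then $v_3\to\alpha_M(v_1)$, then $v_2$ to a color in $L_M(v_2)\setminus\{\alpha_M(v_1),c_1\}$. Your proposed route through Lemma~\ref{lem:res} and Lemma~\ref{lem:edge} does not reliably give a $(2,1)$ edge; if both $\alpha_M(v_3)$ and the color in $L_M(v_3)$ lie in both $L_M(v_1)$ and $L_M(v_2)$ (which is exactly what happens in the exceptional configuration), the residual edge is $(1,1)$ and Lemma~\ref{lem:edge} says nothing, so the "pull back" you describe is unfounded.
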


\begin{proof}
Suppose first $M$ is described by $(3,3,1)$ or $(3,3,2)$, and that $M$ is not oo-recolorable.  If there exists $c_3\in L_M(v_3)\setminus \{\alpha_M(v_1), \alpha_M(v_2)\}$,
then $M\sim(3,3,\underline{1})\rby[c_3]{xybas}(2,2,\bullet)$, contradicting Corollary~\ref{cor-triv}.
Hence, we have $L_M(v_3)\subseteq\{\alpha_M(v_1), \alpha_M(v_2)\}$, and by symmetry we can assume $\alpha_M(v_1)\in L_M(v_3)$.
If there exists a color $c_1\in L_M(v_1)\setminus \{\alpha_M(v_1), \alpha_M(v_2),\alpha_M(v_3)\}$,
then we can first recolor $v_1$ by $c_1$, then $v_3$ by $\alpha_M(v_1)$ and finally $v_2$ by a color in $L_M(v_2)\setminus \{\alpha_M(v_1),c_1\}$,
showing that $M$ is oo-recolorable, a contradiction.  Therefore $L_M(v_1) = \{\alpha_M(v_1), \alpha_M(v_2), \alpha_M(v_3)\}$, and in particular $\alpha_M^{-1}(10)=\emptyset$.
If $L_M(v_2)\neq L_M(v_1)$, then there would exist $c_2\in L_M(v_2)\setminus \{\alpha_M(v_1), \alpha_M(v_2), \alpha_M(v_3)\}$, and
$M\sim (3,\underline{3},1)\rby[c_2]{xybas}(\underline{3},\bullet,1)\rby{2d}(\bullet,\bullet,1)$, contradicting Observation~\ref{obs-start}.
This gives the characterization of non-oo-recolorable motifs described by $(3,3,1)$ or $(3,3,2)$.

Suppose now $M$ is described by $(4,3,1)$; then we can delete a color from $L_M(v_1)$ to obtain a motif $M'$ described by $(3,3,1)$, but with $L_{M'}(v_1)\neq L_{M'}(v_2)$.
The motif $M'$ is oo-recolorable by the previous paragraph, and thus $M$ is oo-recolorable as well.
\end{proof}

We also require the following observation on diamonds in a minimal counterexample.

\begin{lemma}\label{lem:fan}
Let $(G, \alpha)$ be a minimal counterexample.  Let $v_1$, \ldots, $v_4$ be distinct vertices of $G$
such that the subgraph $F$ of $G$ induced by $\{v_1,v_2,v_3,v_4\}$ contains all possible edges except for $v_2v_4$.
If $\deg v_1\le 7$, $\deg v_2\le 5$ and $\deg v_3,\deg v_4\le 6$, then $\alpha^{-1}(10) \cap V(F) \neq 10$. 
\end{lemma}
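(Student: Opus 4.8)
The plan is to assume, for a contradiction, that $\alpha^{-1}(10)\cap V(F)=\emptyset$, and to contradict Lemma~\ref{lemma:redu}. Since $v_2v_4$ is the only non-edge among $v_1,\dots,v_4$, we have $\deg_F v_1=\deg_F v_3=3$ and $\deg_F v_2=\deg_F v_4=2$, while both $\{v_1,v_2,v_3\}$ and $\{v_1,v_3,v_4\}$ induce triangles and $v_1v_2v_3v_4$ is a $4$-cycle. By Lemma~\ref{lemma:redu} there is a motif $M$ induced by $F$ that is not oo-recolorable, and by Lemma~\ref{prop:1} the color $10$ lies in the closed neighbourhood of every $v_i$; as no vertex of $F$ has color $10$, it appears on a neighbour of $v_i$ outside $V(F)$, so $s^F_{G,\alpha}(v_i)\ge 10-2(\deg_G v_i-\deg_F v_i)$. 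With the degree hypotheses and Remark~\ref{rem:size} this gives $|L_M(v_1)|\ge 2$, $|L_M(v_2)|\ge 4$, $|L_M(v_3)|\ge 4$, $|L_M(v_4)|\ge 2$, so $M$ is described by $(2,4,4,2)$ (vertices ordered $v_1,v_2,v_3,v_4$) and $\alpha_M^{-1}(10)=\emptyset$. It remains to prove that any such motif is oo-recolorable. Write $b_i=\alpha_M(v_i)$; recall $b_1,b_2,b_3$ are pairwise distinct, $b_1,b_3,b_4$ are pairwise distinct, and no $b_i$ equals $10$.

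\textbf{Case 1: $L_M(v_4)\neq\{b_1,b_3\}$.} As $|L_M(v_4)|\ge 2=|\{b_1,b_3\}|$, there is a color $c\in L_M(v_4)\setminus\{b_1,b_3\}=L_M(v_4)\setminus\bigcup_{u\in N_M(v_4)}\{\alpha_M(u)\}$. By Lemma~\ref{lem:xybas} it suffices to show $M-(v_4\to c)$ is oo-recolorable. This motif is supported on the triangle $v_1v_2v_3$ with $|L(v_1)|\ge 1$, $|L(v_2)|\ge 4$, $|L(v_3)|\ge 3$, so reordering the vertices as $v_2,v_3,v_1$ it is described by $(4,3,1)$, hence oo-recolorable by Lemma~\ref{lem:triangle1}. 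Thus $M$ is oo-recolorable, a contradiction.

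\textbf{Case 2: $L_M(v_4)=\{b_1,b_3\}$.} Now $b_4\notin L_M(v_4)$, so in every $L_M$-coloring $v_4$ receives $b_1$ or $b_3$, and accordingly $v_1$ (resp.\ $v_3$) must be recolored away from $b_1$ (resp.\ $b_3$) before $v_4$. I would exhibit, in each subcase, an explicit once-only recoloring of $M$, using repeatedly that $|L_M(v_2)|,|L_M(v_3)|\ge 4$ (so a four-element set of forbidden colors cannot contain either list). If $b_1\in L_M(v_1)$: keep $g_1=b_1$, put $g_4=b_3$, and first choose $c\in L_M(v_2)\setminus\{b_1,b_3\}$ with $L_M(v_3)\setminus\{b_1,b_3,b_4,c\}\neq\emptyset$ — such $c$ exists, since otherwise $L_M(v_3)\subseteq\{b_1,b_3,b_4,c\}$ for the $\ge 2$ available colors $c$, forcing $L_M(v_3)\subseteq\{b_1,b_3,b_4\}$, contradicting $|L_M(v_3)|\ge4$ — then set $g_2=c$, pick $g_3\in L_M(v_3)\setminus\{b_1,b_3,b_4,c\}$, and recolor in the order $v_2$ (if $c\neq b_2$), $v_3$, $v_4$. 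If $b_1\notin L_M(v_1)$ and $L_M(v_1)\not\subseteq\{b_2,b_3,b_4\}$: recolor $v_1$ first to some $g_1\in L_M(v_1)\setminus\{b_2,b_3,b_4\}$, then $v_4$ to $b_1$, keeping $v_3$ at $b_3$ and recoloring $v_2$ only if $b_2\notin L_M(v_2)$. The residual subcase is $b_1\notin L_M(v_1)$ and $L_M(v_1)\subseteq\{b_2,b_3,b_4\}$ (so $L_M(v_1)$ is $\{b_2,b_3\}$, $\{b_3,b_4\}$ or $\{b_2,b_4\}$); here I would do a short further case analysis on which of $b_2,b_3,b_4$ lies in $L_M(v_1)$ and on whether $L_M(v_3)=\{b_1,b_2,b_3,b_4\}$, the key trick being to recolor $v_2$ first to some $c\in L_M(v_2)\setminus\{b_1,b_2,b_3\}$ so that $v_3$ can then be sent to $b_2$ (or to $b_1$), after which $v_1$ and $v_4$ are recolored in the forced order. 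In every subcase the set of recolored vertices induces a path or the whole diamond with the target coloring not a cyclic rotation along the $4$-cycle, so the recoloring order can always be completed without a precedence conflict; this contradiction completes the proof.

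The step I expect to demand the most effort is Case 2, and within it the subcase $L_M(v_4)=\{b_1,b_3\}$ together with $L_M(v_1)\subseteq\{b_2,b_3,b_4\}$ and $L_M(v_3)=\{b_1,b_2,b_3,b_4\}$ (and its symmetric variants): there none of the peeling lemmas applies to $v_4$ (it cannot be removed by Lemma~\ref{lem:2d} or Lemma~\ref{lem:10}, Lemma~\ref{lem:xybas} has no admissible color, and Lemma~\ref{lem:res} would empty the size-$2$ list of $v_1$), and no single recoloring step unblocks the configuration, so one must schedule three or four recolorings by hand while threading between the many forbidden colors and the forbidden cyclic patterns.
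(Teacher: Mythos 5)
Your opening is correct and matches the paper exactly: after invoking Lemma~\ref{lemma:redu} and noting that the absence of color $10$ in $V(F)$ plus Lemma~\ref{prop:1} forces $M$ to be described by $(2,4,4,2)$, Case~1 (some $c\in L_M(v_4)\setminus\{b_1,b_3\}$, apply Lemma~\ref{lem:xybas} and then Lemma~\ref{lem:triangle1} to the remaining triangle $(4,3,1)$) is precisely the paper's first step. The first subcase of your Case~2 ($b_1\in L_M(v_1)$) is also a valid direct recoloring, and the pigeonhole argument showing a suitable $c$ exists is sound.

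The problem is that the remainder of Case~2 is not actually a proof. In the second subcase you ``keep $v_3$ at $b_3$,'' but $b_3\in L_M(v_3)$ is not guaranteed: the assignment of available colors removes both $\alpha(u)$ and $\gamma(u)$ for outside neighbors $u$, and $\gamma(u)$ may well equal $\alpha(v_3)$. So the coloring you end with need not be an $L_M$-coloring unless you also recolor $v_3$ (and re-examine the recoloring order), and you never say what $v_2$ is recolored to when $b_2\notin L_M(v_2)$. More seriously, the residual subcase ($b_1\notin L_M(v_1)$, $L_M(v_1)\subseteq\{b_2,b_3,b_4\}$) is left as ``a short further case analysis,'' and your own closing paragraph acknowledges that this is exactly where none of the peeling lemmas applies and the recolorings must be scheduled by hand. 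That is the crux of the lemma, and it is missing.

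The paper avoids this brute force by continuing to \emph{pin down the lists} before ever attempting an explicit recoloring. After establishing $L_M(v_4)=\{\alpha(v_1),\alpha(v_3)\}$ (your Case~1/Case~2 split), it applies Lemma~\ref{lem:xybas} to $v_1$: if there were any $c_1\in L_M(v_1)\setminus\{\alpha(v_2),\alpha(v_3),\alpha(v_4)\}$, the remaining path $v_2v_3v_4$ is described by $(3,3,1)$ and peels down to a contradiction via Lemma~\ref{lem:2d} and Observation~\ref{obs-start}. Hence $L_M(v_1)\subseteq\{\alpha(v_2),\alpha(v_3),\alpha(v_4)\}$. Then the same move on $v_3$ (now exploiting that $c_3\notin\{\alpha(v_1),\dots,\alpha(v_4)\}$ implies $c_3\notin L_M(v_1)\cup L_M(v_4)$) forces $L_M(v_3)=\{\alpha(v_1),\dots,\alpha(v_4)\}$, so in particular $\alpha(v_2)\neq\alpha(v_4)$. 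At this point the structure is so rigid that there are only two explicit subcases (according to whether $\alpha(v_2)\in L_M(v_1)$), each settled by a three- or four-step recoloring with no further branching. This is both shorter and avoids the precedence-conflict bookkeeping you flag as the painful part. I would recommend redoing Case~2 along these lines: constrain $L_M(v_1)$ and $L_M(v_3)$ first via Lemma~\ref{lem:xybas} before writing down any explicit recoloring sequence.
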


\begin{proof}
By Lemma~\ref{lemma:redu}, there exists a motif $M$ induced by $F$ in $(G,\alpha)$ that is not oo-recolorable.
Suppose for a contradiction no vertex of $F$ has color $10$. Since the color $10$ appears in the closed neighbourhood of every vertex, $M$ is described by $(2, 4, 4, 2)$. If there exists a color $c_4 \in L_M(v_4) \setminus \{\alpha(v_1), \alpha(v_3)\}$, then $M \sim (2, 4, 4, \underline{2}) \rby[c_4]{xybas} (1, 4, 3, \bullet)$, contradicting Lemma~\ref{lem:triangle1}.
Therefore $L_M(v_4) = \{\alpha(v_1), \alpha(v_3)\}$.  If there exists a color $c_1\in L_M(v_1)\setminus \{\alpha(v_2),\alpha(v_3),\alpha(v_4\}$, then
$M \sim (\underline{2}, 4, 4, 2) \rby[c_1]{xybas} (\bullet, \underline{3}, 3, 1)\rby{2d}(\bullet, \bullet, \underline{3}, 1)\rby{2d}(\bullet, \bullet, \bullet, 1)$, contradicting Observation~\ref{obs-start}.
Hence, $L_M(v_1)\subseteq \{\alpha(v_2),\alpha(v_3),\alpha(v_4\}$.
If there exists a color $c_3 \in L_M(v_3) \setminus \{\alpha(v_1),\ldots,\alpha(v_4)\}$, then $M \sim (2, 4, \underline{4}, 2) \rby[c_3]{xybas} (2, \underline{3}, \bullet, 2) \rby{2d} (2, \bullet, \bullet,  2)$,
contradicting Corollary~\ref{cor-triv}.  Therefore, $L_M(v_3) = \{\alpha(v_1),\ldots,\alpha(v_4)\}$, and in particular $\alpha(v_2)\neq\alpha(v_4)$.
Choose a color $c_2 \in L_M(v_2) \setminus \{\alpha(v_1),\alpha(v_2),\alpha(v_3)\}$.
\begin{itemize}
\item If $\alpha(v_2)\in L_M(v_1)$, we first recolor $v_2$ to $c_2$, then $v_1$ to $\alpha(v_2)$, and finally $v_4$ to $\alpha(v_1)$.
\item Otherwise, $L_M(v_1)=\{\alpha(v_3),\alpha(v_4)\}$.  We first recolor $v_2$ to $c_2$, then $v_3$ to $\alpha(v_2)$, then $v_1$ to $\alpha(v_3)$, and finally $v_4$ to $\alpha(v_1)$.
\end{itemize}
\end{proof}

\begin{figure}
\begin{center}
\scalebox{1.2}{\includegraphics[scale=0.8]{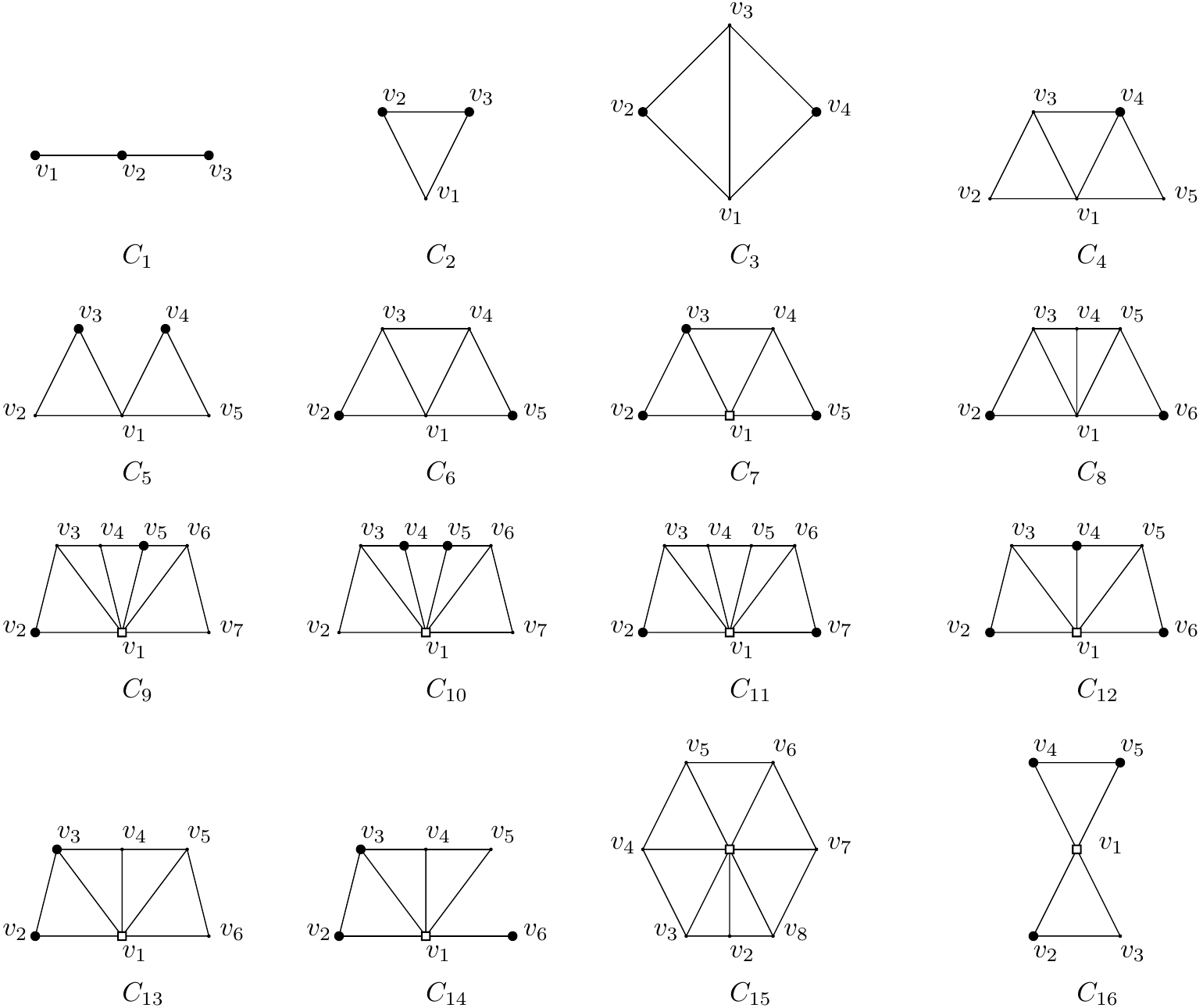}}
\end{center}
\caption{Reducible induced subgraphs, where $\square$ denotes a vertex of degree at most seven, $\bullet$ denotes a vertex of degree five and $\cdot$ denotes a vertex of degree at most six.}
\label{fig:reducible}
\end{figure}

We are now ready to demonstrate that the graphs in Figure \ref{fig:reducible} are reducible. 

\begin{lemma}\label{lem:reducible}
If $(G,\alpha)$ is a minimal counterexample, then $G$ contains none of the induced subgraphs with prescribed vertex degrees depicted in 
Figure~\ref{fig:reducible}.
\end{lemma}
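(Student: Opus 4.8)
The plan is to treat each configuration $H$ appearing in Figure~\ref{fig:reducible} separately and, in every case, contradict Lemma~\ref{lemma:redu}. So suppose $(G,\alpha)$ is a minimal counterexample and $G$ contains one of these graphs as an induced subgraph $H$ with the prescribed vertex degrees. By Lemma~\ref{lemma:redu} there is a motif $M$ induced by $H$ in $(G,\alpha)$ that is not oo-recolorable, and by the definition of ``induced by $H$'' together with Remark~\ref{rem:size} the list assignment $L_M$ is an $s^H_{G,\alpha}$-list assignment, so
$$|L_M(v)| \;\ge\; 9 - 2(\deg_G v - \deg_H v) + \bigl|(N_G(v)\cap\alpha^{-1}(10))\setminus V(H)\bigr|$$
for every $v\in V(H)$. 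Plugging in the prescribed degree bounds ($\deg_G v\le 7$ for a $\square$-vertex, $\deg_G v=5$ for a $\bullet$-vertex, $\deg_G v\le 6$ for a $\cdot$-vertex) turns this into a concrete lower bound on each $|L_M(v)|$. Moreover, by Lemma~\ref{prop:1} the color $10$ lies in the closed neighborhood of every vertex of $G$; hence whenever $v\in V(H)$ satisfies $\alpha(v)\ne 10$ and $v$ has no neighbor of color $10$ inside $H$, the sum above picks up an extra $+1$. This ``$+1$ bonus'', combined with the opposite case $\alpha(v)=10$ (where Lemma~\ref{lem:10} lets us delete $v$ after recoloring the rest of $M$), is the mechanism that makes each configuration reducible.

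For a fixed configuration I would then run the following routine. First split into cases according to which, if any, of the vertices $v_1,\dots,v_m$ of $H$ receive color $10$ under $\alpha$, and in each case record the list-size vector $(s_1,\dots,s_m)$ describing $M$, using the square-bracket notation of the previous section to mark a segment containing a $10$-colored vertex. Then repeatedly peel off vertices: Lemma~\ref{lem:2d} and Lemma~\ref{lem:10} delete a vertex outright, while Lemma~\ref{lem:xybas}, Lemma~\ref{lem:xycons}, and Lemma~\ref{lem:res} remove a vertex at the cost of shrinking the lists of its neighbors by one or two. The goal of the peeling is to reach a residual motif that contradicts either Observation~\ref{obs-start} (a single vertex with a nonempty list) or Corollary~\ref{cor-triv} (every list larger than the degree within the motif). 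When the configuration (or a residue of it) is a triangle or a diamond, rather than peel generically I would invoke the structural Lemmas~\ref{lem:triangle}, \ref{lem:triangle1}, and \ref{lem:fan}, which also dispose of the tight list patterns that naive peeling leaves behind; Lemma~\ref{lem:55} handles the boundary situation of two adjacent degree-$5$ vertices with neither colored $10$, and Lemmas~\ref{lem:edge}, \ref{lem:path}, \ref{lem:path1} handle residual edges and paths. All of this is recorded with the compact arrow notation $M\sim(\dots,\underline{s_i},\dots)\xrightarrow{\text{L}n}(\dots)$.

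The main obstacle here is bookkeeping rather than any single hard idea: for each configuration one must verify that, whatever the distribution of $10$-colored vertices and of the $+1$ bonuses, \emph{some} order of peeling forces a contradiction. The delicate cases are those in which several of the list-size lower bounds are simultaneously tight, for then the generic Lemma~\ref{lem:xycons} step is unavailable and one must fall back on the sharper characterizations of non-oo-recolorable edges/paths/triangles/diamonds in Lemmas~\ref{lem:edge}, \ref{lem:path1}, \ref{lem:triangle1}, and \ref{lem:fan}, or exhibit an explicit recoloring sequence of the kind used in the proofs of those lemmas. Checking that the degree hypotheses ($\square\le 7$, $\bullet=5$, $\cdot\le 6$) are just barely strong enough in each instance — i.e., that relaxing any of them would break the reduction — is the part of the argument that needs the most care, and it is exactly these tight thresholds that the discharging argument of Section~\ref{sec:discharging} will have to exploit.
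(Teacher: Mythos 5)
Your strategy is exactly the paper's: invoke Lemma~\ref{lemma:redu} to obtain a non-oo-recolorable motif $M$ induced by the configuration, read off the list-size lower bounds $s^H_{G,\alpha}$ from the prescribed degrees plus the ``color $10$ in every closed neighborhood'' guarantee of Lemma~\ref{prop:1}, case-split on which vertices of $H$ are colored $10$, then peel vertices with Lemmas~\ref{lem:2d}, \ref{lem:10}, \ref{lem:xybas}, \ref{lem:xycons}, \ref{lem:res} until you hit a contradiction with Observation~\ref{obs-start}, Corollary~\ref{cor-triv}, or one of the edge/path/triangle/diamond classification lemmas. That is the correct framework, and you identify the correct tools, including the role of Lemmas~\ref{lem:55}, \ref{lem:triangle}, and \ref{lem:fan} in constraining which vertices carry color~$10$.

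The gap is that you \emph{describe} the routine but never run it. You acknowledge this yourself (``for each configuration one must verify that \ldots \emph{some} order of peeling forces a contradiction''), but that verification is the entire content of the lemma: it is a statement about sixteen specific configurations, and each one requires a concrete chain of reductions, with explicit color choices in the Lemma~\ref{lem:xybas}/\ref{lem:res} steps and careful handling of the tight cases that do not yield to generic peeling. Several configurations (for instance $C_4$, $C_9$, $C_{10}$, $C_{12}$--$C_{15}$) are genuinely delicate: they need a particular peeling order, intermediate applications of the classification Lemmas~\ref{lem:edge}, \ref{lem:path1}, \ref{lem:triangle1}, and occasionally an explicit recoloring sequence rather than a black-box reduction. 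Nothing you wrote would let a reader check that, say, $C_{10}$ or $C_{16}$ is actually reducible under every admissible placement of color~$10$ and every $s^H_{G,\alpha}$-list assignment. A plan for a case analysis, however accurate, is not the case analysis; as it stands the proposal establishes the approach but not the lemma.
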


\begin{proof}
Suppose for a contradiction $C$ is one of the graphs depicted in Figure~\ref{fig:reducible} and contained in $G$ as an induced subgraph
with the prescribed degrees of vertices.  
By Lemma~\ref{lemma:redu}, there exist a motif $M$ induced by $C$ in $(G,\alpha)$ that is not oo-recolorable.
We prove that each of the cases are reducible separately, starting with $C_1$ and working our way towards $C_{16}$. 
We fix the labelling of vertices as indicated in Figure~\ref{fig:reducible}.
\begin{enumerate}[{\normalfont \hspace{-33pt} (C1)}]
\item By Lemma~\ref{lem:55}, either $\alpha(v_1)=\alpha(v_3)=10$, or $\alpha(v_2)=10$.  In the former case,
$M\sim([1],\underline{3},[1])\rby{2d}(1,\bullet,1)$, contradicting Corollary~\ref{cor-triv}.
In the latter case, $M\sim(1,[\underline{3}],1)\rby{10}(1,\bullet,1)$, again contradicting Corollary~\ref{cor-triv}.

\item By Lemma~\ref{lem:55} and symmetry, we can assume $\alpha(v_3)=10$.
But then $M\sim (1,3,[\underline{3}])\rby{10}(1,\underline{3},\bullet)\rby{2d}(1,\bullet,\bullet)$, contradicting Observation~\ref{obs-start}.

\item If $\alpha(v_2)=\alpha(v_4)=10$, then $M\sim(3,[\underline{3}],3,[\underline{3}])\rby{10}(3,\bullet,3,\bullet)$, contradicting Corollary~\ref{cor-triv}.
Hence, by Lemma~\ref{lem:triangle} and symmetry, we can assume $\alpha(v_3)=10$.
However, then $M\sim(\underline{3},3,[3],3)\rby{xycons}(\bullet,\underline{2},[2],\underline{2})$, contradicting Lemma~\ref{lem:path}.

\item If $\alpha(v_1)=10$, then $M\sim([\underline{5}],1,3,5,1)\rby{10}(\bullet,1,3,\underline{5},1)\rby{2d}(\bullet, 1,\underline{3}, \bullet,1)\rby{2d}(\bullet, 1,\bullet, \bullet,1)$, contradicting Corollary~\ref{cor-triv}.
If $\alpha(v_3)=\alpha(v_5)=10$, then $M\sim(5,1,[3],\underline{5},[1])\rby{2d}(\underline{5},1,[3],\bullet,[1])\rby{2d}(\bullet, 1,[\underline{3}], \bullet,1)\rby{10}(\bullet, 1,\bullet, \bullet,1)$, contradicting Corollary~\ref{cor-triv}.
Hence, Lemma~\ref{lem:triangle} implies $\alpha(v_4)=10$, and thus $M\sim(5,1,3,[\underline{5}],1)\rby{10}(5,1,3,\bullet,1)\sim M'$.  Let $\{c_5\}=L_{M'}(v_5)$.
We have $M'\sim(5,1,3,\bullet,\underline{1})\rby{res}(3,1,3,\bullet,\bullet)$, and thus Lemma~\ref{lem:triangle1} implies that $L_M(v_1)$ is the disjoint union of $L_M(v_3)=\{\alpha(v_1),\alpha(v_2),\alpha(v_3)\}$
and $\{\alpha(v_5),c_5\}$.  In particular, $c_5\neq\alpha(v_1)$, and thus $M'\sim(5,1,3,\bullet,\underline{1})\rby[c_5]{xybas}(4,1,3,\bullet,\bullet)$, contradicting Lemma~\ref{lem:triangle1}

\item If $\alpha(v_1) = 10$, then $M \sim (\underline{[5]}, 1, 3, 3, 1) \rby{10}(\bullet, 1, \underline{3}, \underline{3}, 1) \rby{2d}(\bullet, 1, \bullet, \bullet, 1)$, contradicting Corollary~\ref{cor-triv}.
If $\alpha(v_3) = \alpha(v_4) = 10$, then $M \sim (5, 1, [\underline{3}], [\underline{3}], 1) \rby{10}(\underline{5}, 1, \bullet, \bullet, 1) \rby{2d}(\bullet, 1, \bullet, \bullet, 1)$, again contradicting Corollary~\ref{cor-triv}.
If $\alpha(v_3) = \alpha(v_5) = 10$,  then $M \sim (5, 1, [\underline{3}], 3, [1]) \rby{10} (5, \underline{1}, \bullet, 3, [1]) \rby{res}(3, \bullet, \bullet, 3, [1])$,
contradicting Lemma~\ref{lem:triangle1}.  Hence, by Lemma~\ref{lem:triangle} and symmetry, we can assume $\alpha(v_2)= \alpha(v_5) = 10$.
But then $M \sim (5, [1], \underline{3}, \underline{3}, [1])\rby{xycons}(\underline{3}, [1], \bullet, \bullet, [1])\rby{2d}(\bullet, [1], \bullet, \bullet, [1])$,
contradicting Corollary~\ref{cor-triv}.

\item If $\alpha(v_1) = 10$, then $M \sim ([\underline{5}], 3, 3, 3, 3) \rby{10} (\bullet, 3, 3, 3, 3)$, contradicting Corollary~\ref{cor-triv}.
Note that at most one of the adjacent vertices $v_3$ and $v_4$ can have color 10.  Hence, by Lemma~\ref{lem:triangle} and symmetry, we can assume $\alpha(v_2)=10$.
But then $M \sim (5, [\underline{3}], 3, 3, 3]) \rby{10} (5, \bullet, \underline{3}, 3, 3)\rby{xycons}(4,\bullet,\bullet,2,3)$, contradicting Lemma~\ref{lem:triangle1}.

\item If $\alpha(v_2) = 10$, then $M \sim (3, [\underline{3}], 5, 3, 3) \rby{10} (3, \bullet, \underline{5}, 3, 3) \rby{2d} (3, \bullet, \bullet, 3, 3)$, which contradicts Corollary~\ref{cor-triv}.
Hence, by Lemma~\ref{lem:55} we have $\alpha(v_3) = 10$. It follows that $M \sim (3, 3, [\underline{5}], 3, 3) \rby{10} (3, \underline{3}, \bullet, 3, 3) \rby{2d} (3, \bullet, \bullet, 3, 3)$, which again contradicts Corollary~\ref{cor-triv}. 

\item If $\alpha(v_1) = 10$, then $M \sim ([\underline{7}], 3, 3, 3, 3, 3) \rby{10} (\bullet, 3, 3, 3, 3, 3)$, which contradicts~Corollary \ref{cor-triv}.
If $\alpha(v_2) = \alpha(v_6) = 10$, then $M \sim (7, [\underline{3}], 3, 3, 3, [\underline{3}]) \rby{10} (\underline{7}, \bullet, 3, 3, 3, \bullet) \rby{2d} (\bullet, \bullet, 3, 3, 3, \bullet)$, again contradicting Corollary~\ref{cor-triv}.
If $\alpha(v_3) = \alpha(v_5) = 10$, choose $c_1 \in L_m(v_1) \setminus \{\alpha(v_2), \alpha(v_4), \alpha(v_6)\}$;
we have $M \sim (\underline{7}, 3, [3], 3, [3], 3)\rby[c_1]{xybas}(\bullet, \underline{2}, [2], 2, [2], \underline{2}) \rby{2d} (\bullet, \bullet, [2], 2,  [2], \bullet)$, contradicting Lemma~\ref{lem:path}.

Hence, by Lemma~\ref{lem:triangle} and symmetry, we can assume $\alpha(v_2) = \alpha(v_5) = 10$. For $c_3 \in L_M(v_3) \setminus \{\alpha(v_1), \alpha(v_4)\}$,
we have $M \sim (7, [\underline{3}], 3, 3, [3], 3) \rby{10} (7, \bullet, \underline{3}, 3, [3], 3) \rby[c_3]{xybas} (\underline{6}, \bullet, \bullet, 2, [3], 3) \rby{2d} (\bullet, \bullet, \bullet, 2, [3], 3)$,  contradicting Corollary~\ref{cor-triv}.

\item If $\alpha(v_1) = 10$, then we have $M \sim  ([\underline{7}], 3, 3, 3, 5, 3, 1) \rby{10} (\bullet, 3, 3, 3, \underline{5}, 3, 1) \rby{2d} (\bullet, 3, 3, 3, \bullet, \underline{3}, 1) \rby{2d} (\bullet, 3, 3, 3, \bullet, \bullet, 1)$, which contradicts Corollary \ref{cor-triv}. Therefore by Lemma \ref{lem:fan} we can assume that at least one of $v_2$, $v_3$, $v_4$ has color $10$ and at least one of $v_5$, $v_6$, $v_7$ has color $10$.
Choose a color $c_6 \in L_M(v_6) \setminus \{\alpha(v_6), \alpha(v_7)\}$, let $L_M(v_7) = \{c_7\}$, and choose a color $c_1 \in L_M(v_1) \setminus (\{c_6, c_7\} \cup \bigcup_{i = 2}^7\{\alpha(v_i)\})$.
Then $M \sim (\underline{7}, [3, 3, 3], [5, 3, 1]) \rby[c_1]{xybas} (\bullet, [2, 2, 2], [4, 2, 1]) \sim M'$, where $L_{M'}(v_6)\neq\{\alpha(v_6),\alpha(v_7)$.

If $\alpha(v_5) = 10$, then we can continue with $M' \sim (\bullet, [2, 2, 2], [\underline{4}], 2, 1) \rby{10} (\bullet, [2, 2, 2], \bullet, \underline{2, 1}) \rby{edge} (\bullet, [2, 2, 2], \bullet, \bullet, \bullet)$, which contradicts Lemma~\ref{lem:path}.
If $\alpha(v_6) = 10$, then $M' \sim (\bullet, [2, 2, 2], 4, [2], \underline{1}) \rby{xycons} (\bullet, [2, 2, 2], 4, [\underline{1}], \bullet) \rby{xycons} (\bullet, [2, 2, 2], \underline{3}, \bullet, \bullet) \rby{2d} (\bullet, [2, 2, 2], \bullet, \bullet, \bullet)$, which again contradicts Lemma \ref{lem:path}.
Finally, suppose $\alpha(v_7) = 10$. Then  $M' \sim (\bullet, [2, 2, 2], 4, 2, [\underline{1}]) \rby{xycons} (\bullet, [2, 2, 2], 4, 1, \bullet) \sim M^*$.

If $\alpha(v_4) = 10$, then $M^* \sim (\bullet, 2, 2, [2], \underline{4}, 1, \bullet) \rby{2d} (\bullet, \underline{2, 2, [2]}, \bullet, 1, \bullet) \rby{path} (\bullet, \bullet, \bullet, \bullet, \bullet, 1, \bullet)$, which contradicts Observation~\ref{obs-start}.
If $\alpha(v_3) = 10$, then $M^* \sim (\bullet, \underline{2}, [2], 2, 4, 1, \bullet) \rby{2d} (\bullet, \bullet, [\underline{2}], 2, 4, 1) \rby{10} (\bullet, \bullet, \bullet, 2, 4, 1)$, which contradicts Lemma~\ref{lem:path1}. 
If $\alpha(v_2) = 10$, then $M^* \sim (\bullet, [\underline{2}], 2, 2, 4, 1, \bullet) \rby{10} (\bullet, \bullet, \underline{2}, 2, 4, 1) \rby{xycons} (\bullet, \bullet, \bullet, 1, 4, 1)$, which again contradicts Lemma~\ref{lem:path1}. 

\item  By Lemma~\ref{lem:55} and symmetry, we can assume $v_5$ has color $10$.  By Lemma~\ref{lem:fan}, it follows that $v_2$ or $v_3$ has color $10$.
We have $M \sim (7, [1, 3], 5, [\underline{5}], 3, 1) \rby{10} (7, [1, 3], \underline{5}, \bullet, 3, 1) \rby{2d} (7, [1, 3], \bullet, \bullet, 3, 1) \sim M'$.
Let $L_{M'}(v_i) = \{c_i\}$ for $i\in\{2,7\}$ and choose $c_6 \in L_{M'}(v_6) \setminus \{\alpha(v_6), \alpha(v_7)\}$.
Then there exists a color $c_1 \in L_{M'}(v_1) \setminus \{c_2, c_6, c_7, \alpha(v_2), \alpha(v_3), \alpha(v_6), \alpha(v_7)\}$, and
$M' \sim (\underline{7}, [1, 3], \bullet, \bullet, 3, 1) \rby[c_1]{xybas} (\bullet, [1, 2], \bullet, \bullet, 2, 1)\sim M^*$, where $L_{M^*}(v_6)\neq\{\alpha(v_6),\alpha(v_7)\}$.
This contradicts Lemma~\ref{lem:edge}.

\item If $\alpha(v_1)=10$, then $M\sim([\underline{7}], 3, 3, 3, 3, 3, 3)\rby{10}(\bullet, 3, 3, 3, 3, 3, 3)$, contradicting Corollary~\ref{cor-triv}.
Lemma~\ref{lem:fan} thus implies $\alpha^{-1}(10)\cap\{v_2,v_3,v_4\}\neq\emptyset$ and $\alpha^{-1}(10)\cap\{v_5,v_6,v_7\}\neq\emptyset$.
Apply Lemma~\ref{lem:10} to the vertices in $\alpha^{-1}(10)\cap\{v_2,v_7\}$ and Lemma~\ref{lem:xycons}
to the vertices $v_i$ such that $i\in\{2,\ldots,7\}$, $\alpha(v_i)\neq 10$ and $\deg'_M(v_i)\le 2$; let $M'$ denote the resulting motif.  
Suppose that for some $i\in \{2,3,4\}$, we have $v_i\in V(M')$ and $\alpha(v_i)\neq 10$; the construction of $M'$ implies $i\neq 2$ and $\alpha(v_{i-1})\neq 10\neq \alpha(v_{i+1})$,
and since $\alpha^{-1}(10)\cap\{v_2,v_3,v_4\}\neq\emptyset$, it follows that $i=4$ and $\alpha(v_2)=10$.  By a symmetric argument for $\{5,6,7\}$, we
conclude that $\deg'_{M'}(v_1)\le |\alpha^{-1}(10)\cap \{v_2,v_7\}|$.
However, since $|L_M(v_1)|>\deg_M(v_1)$, the construction of $M'$ implies $|L_{M'}(v_1)|>\deg_{M'} v_1+|\alpha^{-1}(10)\cap \{v_2,v_7\}|$.
Therefore, $M'$ is oo-recolorable by Lemma~\ref{lem:2d} applied to $v_1$ and by Corollary~\ref{cor-triv}.
This is a contradiction.

\item If $\alpha(v_1) = 10$, let $c_1 \in L_M(v_1)$. We have $M \sim ([\underline{5}], 3, 3, 5, 3, 3) \rby[c_1]{res} (\bullet, \underline{2}, 2, 4, 2, \underline{2}) \rby{xycons} (\bullet, \bullet, 1, 4, 1, \bullet)$, contradicting Lemma \ref{lem:path1}.
If $\alpha(v_4) = 10$, we have $M \sim (5, 3, 3, [\underline{5}], 3, 3) \rby{10} (\underline{5}, 3, 3, \bullet, 3, 3) \rby{xycons} (\bullet, 2, 2, \bullet, 2, 2)$, which contradicts Corollary~\ref{cor-triv}.
If $\alpha(v_3) = 10$, then $M \sim (\underline{5}, 3, 3, 5, [3], 3)\rby{xycons} (\bullet, 2,2,\underline{4},[2],2)\rby{2d}(\bullet, 2,2,\bullet,[2],2)$, contradicting Corollary~\ref{cor-triv}.
The case $\alpha(v_5)=10$ is symmetric.  Therefore, Lemma~\ref{lem:fan} implies $\alpha(v_2)=\alpha(v_6)=10$, and thus
$M \sim (5, [\underline{3}], 3, 5, 3, [\underline{3}]) \rby{10} (5,\bullet, 3,5,3,\bullet)$, contradicting Corollary~\ref{cor-triv}.

\item By Lemma~\ref{lem:55}, either $\alpha(v_2) = 10$ or $\alpha(v_3) = 10$, and thus either
$M\sim (5,[\underline{3}],5,3,3,1)\rby{10}\sim (5,\bullet,\underline{5},3,3,1)\rby{2d} (5,\bullet,\bullet,3,3,1)\sim M'$,
or $M\sim (5,3,[\underline{5}],3,3,1)\rby{10}\sim (5,\underline{3},\bullet,3,3,1)\rby{2d} (5,\bullet,\bullet,3,3,1)\sim M'$.
Let $\{c_6\}=L_{M'}(v_6)$; we have $M'\sim (5,\bullet,\bullet,3,3,\underline{1})\rby[c_6]{res}(3,\bullet,\bullet,3,1,\bullet)\sim M^*$,
and by Lemma~\ref{lem:triangle1}, we have $L_{M^*}(v_1)=L_{M'}(v_4)=\{\alpha(v_1),\alpha(v_4),\alpha(v_5)\}$.
Consequently, $L_{M'}(v_1)=\{\alpha(v_1),\alpha(v_4),\alpha(v_5),\alpha(v_6),c_6\}$, and in particular $c_6\not\in\{\alpha(v_1),\alpha(v_5)\}$.
Therefore $M'\sim (5,\bullet,\bullet,3,3,\underline{1})\rby[c_6]{xybas}(4,\bullet,\bullet,3,2,\bullet)$, contradicting Lemma~\ref{lem:triangle1}.

\item By Lemma~\ref{lem:55}, either $\alpha(v_2) = 10$ or $\alpha(v_3) = 10$, and thus either
$M\sim (5,[\underline{3}],5,3,1,1)\rby{10}\sim (5,\bullet,\underline{5},3,1,1)\rby{2d} (5,\bullet,\bullet,3,1,1)\sim M'$,
or $M\sim (5,3,[\underline{5}],3,1,1)\rby{10}\sim (5,\underline{3},\bullet,3,1,1)\rby{2d} (5,\bullet,\bullet,3,1,1)\sim M'$.
Let $\{c_6\}=L_{M'}(v_6)$; we have $M'\sim (5,\bullet,\bullet,3,1,\underline{1})\rby[c_6]{res}(3,\bullet,\bullet,3,1,\bullet)\sim M^*$,
and by Lemma~\ref{lem:triangle1}, we have $L_{M^*}(v_1)=L_{M'}(v_4)=\{\alpha(v_1),\alpha(v_4),\alpha(v_5)\}$.
Consequently, $L_{M'}(v_1)=\{\alpha(v_1),\alpha(v_4),\alpha(v_5),\alpha(v_6),c_6\}$, and in particular $c_6\neq \alpha(v_1)$.
Therefore $M'\sim (5,\bullet,\bullet,3,1,\underline{1})\rby[c_6]{xybas}(4,\bullet,\bullet,3,1,\bullet)$, contradicting Lemma~\ref{lem:triangle1}.

\item In this case $M$ is described by $(9, 3, 3, 3, 3, 3, 3, 3)$.
Repeatedly apply Lemma~\ref{lem:xybas} to the vertices $v_2, \dots, v_8$ as long as there exists $i\in\{2,\ldots,8\}$ such that the list of $v_i$ contains a color not appearing on its neighbors;
let $M'$ denote the resulting motif.  Note that $|L_{M'}(v_1)|>|V(M')|$ and that $|L_{M'}(v_i)|=\deg_{M'} v_i$ and $L_{M'}(v_i)\subseteq \alpha(V(M'))$ for $i\in\{2,\ldots,8\}$ such that $v_i\in V(M')$.
Hence, there exists a color $c_1\in |L_{M'}(v_1)|\setminus\alpha(V(M'))$, and this color does not appear in the lists of vertices of $\{v_2,\ldots,v_8\}\cap V(M')$.
Applying Lemma~\ref{lem:xybas}, $M'-(v_1\to c_1)$ contradicts Corollary~\ref{cor-triv}.
 
\item By Lemma~\ref{lem:55} and symmetry, we can assume that $\alpha(v_2) = 10$.
If $\alpha^{-1}(10)\cap \{v_4,v_5\}\neq \emptyset$,
then $M \sim (3, [\underline{3}], 3, [3, 1]) \rby{10} (3, \bullet, \underline{3}, [3,1]) \rby{2d} (3, \bullet, \bullet, [3,1])$, which contradicts Lemma~\ref{lem:triangle1}.
Therefore, the color $10$ does not appear in the closed neighborhood of $v_4$ in $C$.  Since the color $10$ appears in the closed neighborhood of every vertex in $G$,
we have $s^C_{G, \alpha}(v_4) \geq 4$, and thus
$M \sim (3, [\underline{3}], 3, 4, 1) \rby{10} (3, \bullet, \underline{3}, 4,1) \rby{2d} (3, \bullet, \bullet, 4,1)$, which contradicts Lemma~\ref{lem:triangle1}.
\end{enumerate}
\end{proof}

\section{Discharging phase}\label{sec:discharging}

Consider a plane triangulation $G$, a vertex $v\in V(G)$ of degree $k\ge 3$, and its neighbors $v_1$, \ldots, $v_k$
in the clockwise order around $G$.  We say that the subgraph of $G$ consisting of the cycle $v_1\ldots v_k$, the vertex $v$,
and the edges $vv_i$ for $i=1,\ldots, k$ is a \emph{wheel}, $v$ is its \emph{center} and $v_1, \dots, v_k$ its \emph{rim}.  Note that a wheel is not necessarily an induced subgraph of $G$.
Let $T$ be the triangle bounding the outer face of $G$.
Let $C$ be a graph and $d:V(C)\to\mathbb{N}$ a function assigning a \emph{prescribed degree} to each vertex of $C$.
We say that $C$ with the prescribed degrees $d$ \emph{appears} in $G$ if there exists a wheel $W$ in $G$ and
an injective function $f:V(C)\to V(W)$ such that
\begin{itemize}
\item for distinct $x,y\in V(C)$, $xy$ is an edge of $C$ if and only if $f(x)f(y)$ is an edge of $W$,
\item for all $x\in V(C)$,
 $\deg_G f(x)  \leq d(x)$, and
\item $f(V(C))\cap V(T)=\emptyset$.
\end{itemize}
Hence, $C$ is an induced subgraph of $W$, but not necessarily of $G$ 
 (since $W$ may not be an induced subgraph of $G$).   Let us remark that the last technical condition
from the definition of appearance will be later used to deal with this issue.

\begin{lemma}\label{lemma-appear}
Suppose $G$ is a plane triangulation such that every vertex not incident with the outer face of $G$ has degree at least five.
If $|V(G)|\ge 4$, then one of the graphs with prescribed degrees depicted in Figure~\ref{fig:reducible}
appears in $G$.
\end{lemma}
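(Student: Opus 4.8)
The plan is to argue by contradiction using the discharging method. Suppose that none of the configurations $C_1,\dots,C_{16}$ (with their prescribed degrees) \emph{appears} in $G$. Since $G$ is a plane triangulation with $|V(G)|\ge 4$, it is a triangulation of the sphere, so every vertex has degree at least $3$, and Euler's formula gives $\sum_{v\in V(G)}(\deg_G v-6)=-12$. Assign to each vertex $v$ the initial charge $\mathrm{ch}(v)=\deg_G v-6$, so that the total charge is $-12$. Note that internal vertices of degree $6$ start with charge $0$, internal vertices of degree at least $7$ with positive charge, internal vertices of degree $5$ with charge $-1$, and each vertex of the outer triangle $T$ with charge at least $3-6=-3$; these boundary vertices and the internal degree-$5$ vertices are the only ones with negative charge.

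I would then set up discharging rules that move charge only between internal vertices: roughly, each internal vertex of degree at least $7$ sends its surplus to nearby internal vertices of degree $5$, following the wheels of $G$ (sending fixed amounts across shared faces, and possibly routing charge through internal degree-$6$ vertices as conduits). The three vertices of $T$ are kept inert, so after discharging their total charge is still at least $-9$, while the total charge over all vertices is still $-12$. Hence it suffices to prove that, under the assumption that no $C_i$ appears, every internal vertex finishes with charge at least $0$: for then the total would be at least $0+0+\cdots+(-3)+(-3)+(-3)=-9>-12$, a contradiction. For internal vertices of degree $6$ or more this nonnegativity will be immediate from the rules (they are designed never to drop a vertex below $0$), so the whole argument reduces to showing that every internal vertex of degree $5$ recovers its deficit of $1$.

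The main work, and the main obstacle, is exactly this local analysis. For an internal vertex $v$ of degree $5$ I would consider the wheel $W_v$ centred at $v$, look at the cyclic sequence of degrees of its five rim vertices, and show that if the rules fail to deliver charge $+1$ to $v$ then the degrees of $v$ together with suitable neighbours — and, where needed, second neighbours lying in a common wheel — realise one of $C_1,\dots,C_{16}$ with all the prescribed degree inequalities, contrary to assumption. Two points demand care: first, the rim of $W_v$ may contain vertices of $T$, which can have degree below $5$ and which must not be included in the extracted configuration, so in every failing case one has to exhibit a $C_i$ using internal vertices only (this is ultimately why the definition of ``appears'' insists that $f(V(C))$ be disjoint from $T$); and second, each $C_i$ must be located inside a single wheel, not merely as an abstract subgraph of $G$, which restricts which neighbours and second neighbours of $v$ may be combined. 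Matching the precise amounts transferred by the rules against exactly the local structures forbidden by $C_1,\dots,C_{16}$ is what makes the case analysis lengthy, but each individual case is routine.
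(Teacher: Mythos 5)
Your high-level plan is the same as the paper's: initial charge $\deg_G v - 6$ (the paper uses $10\deg_G v - 60$, which is just a scaling to avoid fractions), total $-12$ by Euler's formula, rules sending charge from high-degree vertices toward internal degree-$5$ vertices guided by wheels, and a case analysis converting any failure into an appearance of some $C_i$. However, there is a genuine gap in your treatment of the outer triangle $T$.

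You propose to keep the three vertices of $T$ inert and to conclude from ``every internal vertex ends with charge $\geq 0$'' that the total is at least $-9 > -12$, a contradiction. But the configuration set in the figure is tailored to a different convention, and your version cannot close the case analysis for internal degree-$5$ vertices that sit next to $T$. The definition of ``appears'' requires $f(V(C)) \cap V(T) = \emptyset$, so a configuration can never use an outer vertex. Now take an internal degree-$5$ vertex $z$ and a face $zwx$ at $z$. The paper's argument that a unit of charge reaches $z$ through this face runs: if $w$ (or $x$) is ``big'' a rule donates; otherwise both $w$ and $x$ are internal of degree at most $6$, and one of $C_2, C_3, C_4, C_6, C_8$ appears --- and these appearances are legal precisely because $w$ and $x$ are not on $T$. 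The paper makes this dichotomy exhaustive by declaring \emph{every} outer-face vertex ``big,'' regardless of its degree. In your scheme, an outer vertex $w$ of degree at most $6$ adjacent to $z$ falls through the cracks: it donates nothing (inert), yet it cannot be the missing vertex of any $C_i$ (it lies on $T$). So ``all internal vertices finish nonnegative'' is not provable from the paper's configurations under your rules, and the reduction to the $-9 > -12$ inequality collapses.

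The paper's resolution is to let the three outer vertices act as donors like degree-$\geq 7$ vertices, and to pay for it with a much weaker lower bound on their final charge: each outer vertex finishes with at least $-38$ in the paper's scale, i.e.\ $-3.8$ in yours, so the total is at least $3\times(-3.8) = -11.4 > -12$. If you insist on keeping $T$ inert you would have to enlarge or change the list of forbidden configurations, which is not what the lemma asserts. Apart from this, your acknowledgment that the $C_i$ must live inside a single wheel and must avoid $T$ matches the paper's care, and the internal degree-$\geq 7$ analysis is, as you say, a routine (if lengthy) case check.
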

\begin{proof}
Suppose for a contradiction none of these graphs appears in $G$.
We assign the initial charge $\mathrm{ch}_0(v) = 10 \cdot \deg v - 60$ to each vertex $v$ of $G$.   Since $G$ is
a triangulation, we have $|E(G)|=3|V(G)|-6$ by Euler's formula, and thus
\begin{equation}\label{eq:1}
\sum_{v \in V(G)} \mathrm{ch}_0(v) = 20|E(G)| - 60|V(G)| =  - 120.
\end{equation}

A vertex is \emph{big} if it either has degree at least 7 or it is incident with the outer face of $G$,
\emph{medium} if it has degree six and is not incident with the outer face of $G$,
and \emph{small} if it has degree five and is not incident with the outer face of $G$.
Next, we redistribute the charges according to the following rules.
For accounting purposes, for a rule sending some amount of charge from a vertex $v$ to another vertex $u$,
we also specify faces incident with $v$ through which the charge leaves $v$, and
an edge $e$ incident with $u$ along which the charge arrives to $u$.  Additionally, we specify a face incident with $e$ through which
the charge passes.

\begin{itemize}
\item[\normalfont{(R1)}] A big vertex $v$ sends $2$ units of charge to each adjacent small vertex $u$ along the edge $vu$;
of this charge, one unit leaves $v$ and passes through one of the faces incident with the edge $uv$, while the other unit leaving $v$ passes through the other face incident with $uv$. 
\item[\normalfont{(R2)}] Suppose $vux$ is a face of $G$, $v$ is big, $u$ is small and $x$ is medium or small.
Then $v$ sends $1$ unit of charge to $u$; the charge leaves $v$ and passes through the face $vux$ to arrive to $u$ along the edge $xu$.
\item[\normalfont{(R3)}] Suppose $v_1$, \ldots, $v_m$ for some $m\in\{3,\ldots,6\}$ are consecutive neighbors of a medium vertex $x$
in the clockwise or the counterclockwise order, $v_1$ is small, $v_2$, \ldots, $v_{m-1}$ are medium and $v_m$ is big.
Then $v_m$ sends $1$ unit of charge to $v_1$; the charge leaves $v_m$ through the face $xv_{m-1}v_m$ and passes through the face
$xv_1v_2$ to arrive to $v_1$ along the edge $v_2v_1$.
\end{itemize}
Note that (R2) applies in addition to the two units of charge sent by $v$ to $u$ by (R1),
but the charge arrives to $u$ along a different edge.  Furthermore, if $x$ is small, the charge is also being
sent from $v$ to $x$ by (R2) with the roles of $u$ and $x$ exchanged.
Furthermore, note that (R3) may possibly send charge from $v_m$ to $v_1$ twice around the same vertex $x$, once in the clockwise direction, once in the counterclockwise one
(when $x$ is the center of a wheel whose rim contains $v_1$ and $v_m$ and every other vertex of the rim is medium).
We now analyze the final charge $\mathrm{ch}(v)$ of each vertex $v$ of $G$ after the redistribution of the charge.
Clearly, for a medium vertex $v$, we have $\mathrm{ch}(v)=\mathrm{ch}_0(v)=0$.

Consider now a small vertex $z$.  We claim that for each edge $e=wz$ incident with $z$ and each face $f=wzx$ incident with $e$,
a unit of charge passes through $f$ to arrive to $z$ along $e$, and thus $\mathrm{ch}(z)=\mathrm{ch}_0(z)+10\times 1=0$.
Indeed, if $w$ is big, then this is the case by (R1).  If $w$ is not big and $x$ is big,
then a unit of charge passing through $f$ arrives to $z$ along $e$ from $x$ by (R2).  If neither $w$ nor $x$ is big, then since $C_2$
does not appear in $G$, both of them are medium.  Since $C_4$ does not appear in $G$, $x$ has a neighbor distinct from $z$ that is not
medium.  Let $v_1=z$, $v_2=w$, $v_3$, \ldots, $v_m$ be the neighbors of $x$ in order, where $v_3$, \ldots, $v_{m-1}$ are medium and $v_m$
is not medium.  Since $C_3$, $C_6$, $C_8$ and $C_2$ do not appear in $G$, the vertex $v_m$ is not small, and thus $v_m$ is big.
Consequently, a unit of charge passing through $f$ arrives to $z$ along $e$ from $v_m$ by (R3).

Suppose now $v$ is a vertex of degree $d\ge 7$ not incident with the outer face of $G$.
For a face $f=vxy$, let $t(f)$ denote the total amount of charge that leaves $v$ through $f$.
If both $x$ and $y$ are small, then $t(f)=4$ since two units leave through $f$ by (R1), one along the edge $vx$ and the other along $vy$, and two by (R2), both along the edge $xy$.  If $x$ is small and $y$ is medium or vice versa,
then $t(f)=2$ since one unit leaves through $f$ by (R1) and one by (R2).  If both $x$ and $y$ are medium, then $t(f)\le 2$, since
at most two units leave through $f$ by (R3).  If $x$ is small and $y$ is big or vice versa, then $t(f)=1$, since only
one unit leaves through $f$ by (R1).  Otherwise, $t(f)=0$.

Furthermore, consider the faces $f_1$ and $f_2$ following $f$ in the clockwise order around $f$.  Since $C_1$ does not appear in $G$,
if $t(f)=4$, then $t(f_1)\le 2$ and $t(f_2)\le 2$.  Consequently, there are at most $\lfloor d/3\rfloor$ faces $f$ incident with $v$
such that $t(f)=4$.  If $d\ge 8$, this implies
$$\mathrm{ch}(v)\ge \mathrm{ch}_0(v)-2d-2\lfloor d/3\rfloor=8d-2\lfloor d/3\rfloor-60\ge 0.$$
Hence, we can assume $d=7$, and thus $\mathrm{ch}_0(v)=10$.  Let $v_1$, \ldots, $v_7$ be the neighbors of $v$ in the clockwise order, and for $i=1,\ldots,7$,
let $f_i$ be the face $vv_iv_{i+1}$ (where $v_8=v_1$).  Let $s=\sum_{i=1}^7 t(f_i)$ be the total amount of charge sent by $v$.
We argue that $s\le 10$, and thus $\mathrm{ch}(v)=\mathrm{ch}_0(v)-s\ge 0$.  To do so, we discuss several cases.
\begin{itemize}
\item \emph{$v$ is adjacent to two consecutive small vertices in the cycle on neighbors of $v$.} Thus $v$ is incident with a face $f$ such that $t(f) = 4$. By symmetry, we can assume $t(f_1)=4$, and
thus $v_1$ and $v_2$ are small.  Since $C_1$ does not appear in $G$, $v_3$ and $v_7$ are not small.  

If $v_5$ is small,
then since $C_{16}$ does not appear in $G$, both $v_4$ and $v_6$ are big and hence $t(f_4)=t(f_5)=1$, $t(f_3)=t(f_6)=0$, and $t(f_2),t(f_7)\le 2$,
implying $s\le 10$.  Hence, we can assume $v_5$ is not small.

Suppose $v_6$ and $v_7$ are both medium.  Since $C_{13}$ does not appear in $G$, $v_5$ is big, and thus $t(f_7)+t(f_6)+t(f_5)\le 2+2+0=4$.
Since $C_{14}$ and $C_{10}$ do not appear in $G$, $v_4$ is not small and $v_3$ and $v_4$ are not both medium, respectively, implying $t(f_3)=0$ and $t(f_4)=0$.
Consequently, $s\le 4+2+0+0+4=10$.  Hence, assume $v_6$ and $v_7$ are not both medium, and symmetrically, that $v_3$ and $v_4$
are not both medium.

If $v_4$ is small, then since $C_7$ and $C_{16}$ do not appear in $G$, $v_3$ and $v_5$ are big and
$t(f_2)+t(f_3)+t(f_4)=1+1+1=3$.  Otherwise, since $v_3$ and $v_4$ are not both medium, we have $t(f_3)=0$ and $t(f_2)+t(f_4)\le 3$.
Hence $t(f_2)+t(f_3)+t(f_4)\le 3$, and symmetrically $t(f_7)+t(f_6)+t(f_5)\le 3$.  It follows that $s\le 4+3+3=10$.

\item \emph{small vertices are not consecutive in the cycle on neighbors of $v$.} Consequently, $t(f)\le 2$ for each face incident with $v$ and
$v$ is adjacent to at most three small vertices.

Before we proceed, let us make a useful observation: 

\noindent($\star$) \emph{For any $b\in \{1,\ldots, 5\}$, 
if none of $v_b$, $v_{b+1}$ and $v_{b+2}$ is small, then $t(f_b)+t(f_{b+1})\le 3$.}

This is clearly the case unless $v_b$, $v_{b+1}$, and $v_{b+2}$ are all medium and $t(f_b)=t(f_{b+1})=2$.
Then, let $v_b$, $v$, $v_{b+2}$, $z_3$, $z_2$, $z_1$ be the neighbors of $v_{b+1}$ in order.
Since $t(f_b)=t(f_{b+1})=2$, charge leaves $v$ through $f_b$ and $f_{b+1}$ twice by (R3),
and thus either both $z_1$ and $z_3$ are small, or none of $z_1$, $z_2$, and $z_3$ is big and at least one of them is small.
But then either $C_5$ or $C_4$ appears in $G$, which is a contradiction.

Let us now continue with the case analysis.
\begin{itemize}
\item \emph{$v$ is adjacent to three small vertices.} By symmetry we can assume $v_1$, $v_3$, and $v_5$ are small.
Since $C_{12}$ does
not appear in $G$, we can by symmetry assume $v_2$ is big hence $t(f_1) = t(f_2) = 1$.  If $v_4$ is big, then $t(f_3) = t(f_4) = 1$ implying $s\le 4\times 1+3\times 2=10$. Thus, since $C_1$ does
not appear in $G$, we can assume $v_4$ is medium.  Since $C_9$ does not appear in $G$, $v_6$ and $v_7$ cannot both be medium, and thus
$t(f_6)=0$.  Consequently, $s\le 1+1+2+2+2+0+2=10$.
\item \emph{$v$ is adjacent to two small vertices, at distance two in the cycle on neighbors of $v$.}  By symmetry we can assume $v_1$ and $v_3$
are small.  If $v_5$ is big, then $t(f_4)=t(f_5)=0$ and $s\le 5\times 2=10$.  Hence, we can assume $v_5$ is medium, and by symmetry $v_6$ is medium.
Since $C_{11}$ does not appear in $G$, $v_4$ and $v_7$ are not both medium; by symmetry, we can assume $v_7$ is big,
and thus $t(f_6)=0$ and $t(f_7)=1$.  Furthermore, $t(f_4)+t(f_5)\le 3$ by ($\star$), and thus $s\le 2+2+2+3+0+1=10$.
\item \emph{$v$ is adjacent to two small vertices, at distance three in the cycle on neighbors of $v$.} By symmetry we can assume $v_1$ and $v_4$
are small.  If $v_6$ is big or both $v_5$ and $v_7$ are big, then $t(f_5)=t(f_6)=0$ and $s\le 5\times 2=10$;
hence, we can by symmetry assume $v_5$ and $v_6$ are medium.  Since $C_9$ does not appear in $G$, $v_2$ and $v_3$ are not both medium, and thus
$t(f_1)+t(f_2)+t(f_3)\le 2+0+1=3$.  Furthermore, $t(f_5)+t(f_6)\le 3$ by ($\star$), implying $s\le 3+2+3+2=10$.
\item \emph{$v$ is adjacent to at most one small vertex.}  By symmetry we can assume no neighbor of $v$ other than $v_1$ is small.
If $v_i$ is big for some $i\in\{1,3,4,5,6\}$, then $t(f_{i-1})=t(f_i)=0$ (where $f_0=f_7$) and $s\le 5\times 2=10$.
Hence, we can assume $v_i$ is medium for $i\in\{3,4,5,6\}$ and $v_1$ is medium or small.  Since $C_{15}$ does not appear in $G$,
$v_2$ and $v_7$ are not both medium; by symmetry, we can assume $v_2$ is big, and thus $t(f_1)+t(f_2)\le 1$.  By ($\star$),
we have $t(f_3)+t(f_4)\le 3$, and thus $s\le 1+3+3\times 2=10$.
\end{itemize}
\end{itemize}
We conclude that every vertex not incident with the outer face of $G$ has non-negative final charge.

Finally, let us consider a vertex $v$ incident with the outer face of $G$.  Since $|V(G)|\ge 4$ and $G$ is a triangulation, we have
$\deg(v)\ge 3$.  Furthermore, the outer face $f$ of $G$ is incident only with big vertices by definition, and thus $t(f)=0$.
In the utmost case, $t(f') \leq 4$ for every face $f' \not= f$ incident with $v$ and hence $\mathrm{ch}(v)\ge \mathrm{ch}_0(v)-(\deg v-1)\times 4=6\deg v-56\ge-38$.
Therefore, (\ref{eq:1}) together with the fact that no charge is created or lost in the redistribution process gives
$$-120=\sum_{v \in V(G)} \mathrm{ch}_0(v)=\sum_{v \in V(G)} \mathrm{ch}(v)\ge 3\times(-38),$$
which is a contradiction.
\end{proof}

\begin{corollary}\label{cor-indu}
If $G$ is a plane triangulation of minimum degree at least five, then
one of the graphs depicted in Figure~\ref{fig:reducible} is an induced
subgraph of $G$ with prescribed vertex degrees.
\end{corollary}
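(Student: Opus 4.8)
The plan is to deduce the corollary from Lemma~\ref{lemma-appear} by an induction that upgrades ``appears'' to ``induced subgraph'', exploiting the technical condition $f(V(C))\cap V(T)=\emptyset$ in the definition of appearance exactly as the remark preceding Lemma~\ref{lemma-appear} anticipates. Concretely, I would prove the following slightly stronger statement by induction on $|V(G)|$: \emph{if $G$ is a plane triangulation with at least four vertices in which every vertex not incident with the outer face has degree at least five, then one of the graphs of Figure~\ref{fig:reducible} occurs in $G$ as an induced subgraph with its prescribed degrees, witnessed by an injection whose image is disjoint from the outer triangle $T$ of $G$}. The corollary is then immediate: a plane triangulation of minimum degree at least five has at least four vertices, and choosing any of its faces to be the outer face makes it an instance of the stronger statement.

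For the inductive step, I would first apply Lemma~\ref{lemma-appear} to get a graph $C$ of Figure~\ref{fig:reducible} that appears in $G$, via a wheel $W$ with centre $x$, rim $v_1\dots v_k$, and injection $f$ with $f(V(C))\cap V(T)=\emptyset$. If $W$ is an induced subgraph of $G$, then so is $C$ (it is induced in $W$, and the degree constraints are built into ``appears''), its image still avoids $V(T)$, and we are done. Otherwise $W$ is not induced. Since the neighbourhood of $x$ in $G$ is exactly $\{v_1,\dots,v_k\}$ (as $G$ is a triangulation), the only possible extra edge is a chord $v_av_b$ of the rim cycle; then $S=\{x,v_a,v_b\}$ is a triangle, and since $v_av_b$ is a genuine chord the two arcs into which $v_a$ and $v_b$ split the rim are both nonempty, so $S$ is a separating triangle of $G$.

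Let $\Delta$ be the closed disc bounded by $S$ that does not contain the outer face of $G$, and let $G'$ be the subgraph of $G$ drawn in $\Delta$. Then $G'$ is a plane triangulation with outer triangle $S$; it has at least four but fewer vertices than $G$, because one of the two rim arcs lies in the interior of $\Delta$ (and its vertices have degree at least five) while the other lies in the complement of $\Delta$; and every vertex of $G'$ not incident with $S$ lies in the interior of $\Delta$, so all of its $G$-neighbours also lie in $\Delta$, whence it has the same degree in $G'$ as in $G$, in particular at least five. Thus $G'$ is an instance of the stronger statement, and the induction hypothesis yields a graph $C'$ of Figure~\ref{fig:reducible} induced in $G'$ with its prescribed degrees, via an injection $f'$ with $f'(V(C'))\cap V(S)=\emptyset$. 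Every vertex of $f'(V(C'))$ lies in the interior of $\Delta$, so it has the same degree in $G$ as in $G'$, every edge of $G$ joining two of these vertices already lies in $G'$, and none of them is incident with the outer face of $G$; hence $C'$ is an induced subgraph of $G$ with its prescribed degrees and image disjoint from $V(T)$, completing the induction.

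The crux — the step I expect to require the most care — is the passage from ``$W$ is not an induced subgraph of $G$'' to a separating triangle $S$, together with checking that the inner triangulation $G'$ is again a legitimate instance: that its non-boundary vertices retain degree at least five, that $|V(G')|$ strictly decreases, and that inducedness and the degree labels transfer back to $G$. This is exactly where the hypothesis $f(V(C))\cap V(T)=\emptyset$ is indispensable, since it guarantees that the configuration extracted from $G'$ lives in the interior of the bounded disc $\Delta$ and therefore survives, with its prescribed degrees intact, as an induced configuration of $G$.
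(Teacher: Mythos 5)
Your proof is correct and rests on the same key idea as the paper's: when the wheel witnessing an appearance is not induced, the offending chord together with two spokes yields a separating triangle, and the condition $f(V(C))\cap V(T)=\emptyset$ guarantees that degrees are preserved when restricting to the bounded side. The only difference is packaging: the paper avoids the explicit induction by immediately passing to the triangulation $G'$ bounded by an \emph{innermost} separating triangle (or the outer triangle if none exists), which then has no separating triangles at all, so a single application of Lemma~\ref{lemma-appear} already gives an induced wheel; your version instead applies Lemma~\ref{lemma-appear} at each recursion level until the wheel happens to be induced, which amounts to the same thing.
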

\begin{proof}
If $G$ contains a separating triangle, then let $T$ be a separating triangle in $G$
such that the open disk in the plane bounded by $T$ is minimal; otherwise, let $T$ be
the triangle bounding the outer face of $G$.  Let $G'$ be the induced subgraph of $G$ drawn in the
closed disk bounded by $T$.  By Lemma~\ref{lemma-appear}, one of the graphs $C$ with prescribed
degrees depicted in Figure~\ref{fig:reducible} appears in $G$, via a map $f:V(C)\to V(W)$ for a
wheel $W$ in $G'$.  By the choice of $G'$, observe that $G'$ does not contain any separating triangle,
and thus $W$ is an induced subgraph of $G'$, and thus also of $G$.  Since $C$ is an induced subgraph
of $W$, it follows that $C$ is an induced subgraph of $G$.  Furthermore, $V(C)\cap V(T)=\emptyset$
by the last condition from the definition of appearance, and thus the vertices of $f(V(C))$ have the
same degree in $G'$ and in $G$.
\end{proof}

The proof of the main result is now straightforward.

\begin{proof}[Proof of Theorem~\ref{thm:9colors}]
Suppose for a contradiction that there exists a non-recolorable scene $(G,\alpha)$.
Choose such a scene with the smallest number of vertices, among those with the largest number of edges, and among those
with the largest number of vertices of color $10$.  Then $(G,\alpha)$ is a minimal counterexample,
and thus $G$ is a triangulation by Lemma~\ref{prop:1}, has minimum degree at least five by Corollary~\ref{cor-deg},
and does not contain any of the induced subgraphs with prescribed vertex degrees depicted in
Figure~\ref{fig:reducible}.  However, this contradicts Corollary~\ref{cor-indu}.
\end{proof}

\section*{Acknowledgements} 

Zden\v{e}k Dvo\v{r}\'ak was supported in part by ERC Synergy grant DYNASNET no. 810115.
Carl Feghali was supported by grant 19-21082S of the Czech Science Foundation.

\bibliography{bibliography}{}

\begin{thebibliography}{10}

\bibitem{bonamy13}
M.~Bonamy and N.~Bousquet.
\newblock Recoloring graphs via tree decompositions.
\newblock {\em European Journal of Combinatorics}, 69:200--213, 2018.

\bibitem{bonamy12}
M.~Bonamy, N.~Bousquet, and G.~Perarnau.
\newblock Frozen {$(\Delta + 1)$}-colourings of bounded degree graphs.
\newblock {\em arXiv}, 1811.12650, 2018.

\bibitem{BJLPP14}
M.~Bonamy, M.~Johnson, I.~M. Lignos, V.~Patel, and D.~Paulusma.
\newblock Reconfiguration graphs for vertex colourings of chordal and chordal
  bipartite graphs.
\newblock {\em Journal of Combinatorial Optimization}, 27:132--143, 2014.

\bibitem{bartier}
N.~Bousquet and V.~Bartier.
\newblock Linear transformations between colorings in chordal graphs.
\newblock In M.~A. Bender, O.~Svensson, and G.~Herman, editors, {\em 27th
  Annual European Symposium on Algorithms, {ESA} 2019, September 9-11, 2019,
  Munich/Garching, Germany}, volume 144 of {\em LIPIcs}, pages 24:1--24:15.
  Schloss Dagstuhl - Leibniz-Zentrum f{\"{u}}r Informatik, 2019.

\bibitem{heinrich}
N.~Bousquet and M.~Heinrich.
\newblock A polynomial version of {C}ereceda's conjecture.
\newblock {\em arXiv}, 2019.

\bibitem{bousquet11}
N.~Bousquet and G.~Perarnau.
\newblock Fast recoloring of sparse graphs.
\newblock {\em European Journal of Combinatorics}, 52:1--11, 2016.

\bibitem{luisthesis}
L.~Cereceda.
\newblock {\em Mixing graph colourings}.
\newblock PhD thesis, London School of Economics, 2007.

\bibitem{dyer}
M.~Dyer, A.~Flaxman, A.~Frieze, and E.~Vigoda.
\newblock Randomly colouring sparse random graphs with fewer colours than the
  maximum degree.
\newblock {\em Random Structures Algorithms}, 29:450--465, 2006.

\bibitem{eiben}
E.~Eiben and C.~Feghali.
\newblock Towards {C}ereceda's conjecture for planar graphs.
\newblock {\em Journal of Graph Theory}, to appear, 2019.

\bibitem{feghalipaths}
C.~Feghali.
\newblock Paths between colourings of graphs with bounded tree-width.
\newblock {\em Information Processing Letters}, 144:37--38, 2019.

\bibitem{feghali}
C.~Feghali.
\newblock Paths between colourings of sparse graphs.
\newblock {\em European Journal of Combinatorics}, 75:169--171, 2019.

\bibitem{fsparse}
C.~Feghali.
\newblock Reconfiguring colourings of graphs with bounded maximum average
  degree.
\newblock {\em arXiv}, 1904.12698, 2019.

\bibitem{brooks}
C.~Feghali, M.~Johnson, and D.~Paulusma.
\newblock A reconfigurations analogue of {B}rooks' theorem and its
  consequences.
\newblock {\em Journal of Graph Theory}, 83(4):340--358, 2016.

\bibitem{jerrum}
M.~Jerrum.
\newblock A very simple algorithm for estimating the number of {$k$}-colorings
  of a low-degree graph.
\newblock {\em Random Structures {\&} Algorithms}, 7(2):157--165, 1995.

\bibitem{nishimura}
N.~Nishimura.
\newblock Introduction to reconfiguration.
\newblock {\em Algorithms}, 11(4):52, 2018.

\bibitem{thomassen}
C.~Thomassen.
\newblock Decomposing a planar graph into an independent set and a 3-degenerate
  graph.
\newblock {\em Journal of Combinatorial Theory, Series B}, 83(2):262--271,
  2001.

\bibitem{He13}
J.~{{van den}}~Heuvel.
\newblock The complexity of change.
\newblock {\em Surveys in Combinatorics 2013, edited by S.~R.~Blackburn,
  S.~Gerke, and M.~Wildon, London Mathematical Society Lecture Notes Series},
  409, 2013.

\bibitem{vigoda}
E.~Vigoda.
\newblock Improved bounds for sampling colorings.
\newblock {\em Journal of Mathematical Physics}, 41(3):1555--1569, 2000.

\end{thebibliography}
\bibliographystyle{abbrv}
 
\end{document}